\theoremstyle{plain}
\newtheorem{thm}{Theorem}[section]
\newtheorem{cor}[thm]{Corollary}
\theoremstyle{definition}
\numberwithin{thm}{section}
\numberwithin{equation}{section}
\def\esup{\operatornamewithlimits{ess\,sup}}
\def\up{\uparrow}
\def\mp{{\mathfrak M}}
\def\I{(0,\infty)}
\newcommand\norm[1]{\left\lVert#1\right\rVert}
\begin{document}

\title[New equivalence theorems for monotone quasilinear operators]{New equivalence theorems for weighted inequalities involving the composition of monotone quasilinear operators with the Hardy and Copson operators and their applications}

\author[R.Ch. Mustafayev]{RZA MUSTAFAYEV}
\address{RZA MUSTAFAYEV, Department of Mathematics, Kamil \"{O}zda\u{g} Faculty of Science, Karamano\u{g}lu Mehmetbey University, 70200, Karaman, Turkey}
\email{rzamustafayev@gmail.com}

\author[M. Yilmaz]{MERVE YILMAZ}
\address{MERVE YILMAZ, Department of Mathematics, Kamil \"{O}zda\u{g} Faculty of Science, Karamano\u{g}lu Mehmetbey University, 70200, Karaman, Turkey}
\email{mervegorgulu@kmu.edu.tr}
\subjclass[2010]{26D10, 26D15}

\keywords{Monotone quasilinear operators; the Hardy and Copson operators; composition of operators; weights; monotone functions; weigthed Lebesgue spaces; weighted inequalities}

\begin{abstract}
In this paper, new equivalence theorems for the boundedness of the composition of a quasilinear operator $T$ with the Hardy and Copson operators in weighted Lebesgue spaces are proved. The usefulness of the obtained results is illustrated in the case of weighted Hardy-type and weighted iterated Hardy-type inequalities. 
\end{abstract}

\maketitle

\section{Introduction}\label{in}

There are plenty of motivations for studying weighted inequalities for a composition of quasilinear operators with the Hardy and Copson operators. Problems involving iterations of operators have recently been at the centre of the theory of weighted inequalities. Some of them are discussed in the last section of the paper. Other applications can be found in the theory of function spaces and interpolation theory.

By ${\mathfrak M}(0,\infty)$ we denote the set of all measurable functions on $(0,\infty)$. The symbol ${\mathfrak M}^+ (0,\infty)$ stands for the collection of all $f\in{\mathfrak M} (0,\infty)$ which are
non-negative on $(0,\infty)$, while ${\mathfrak M}^{+,\downarrow}(0,\infty)$ and
${\mathfrak M}^{+,\uparrow}(0,\infty)$ are used to denote the subset of ${\mathfrak M}^+ (0,\infty)$ containing all non-increasing and non-decreasing functions on $(0,\infty)$,
respectively. The family of all weight functions (also called just weights) on $(0,\infty)$, that is, almost everywhere positive functions
on $(0,\infty)$, is given by ${\mathcal W}(0,\infty)$.

For $p\in (0,\infty]$ and $w\in {\mathfrak M}^+ (0,\infty)$ we define the functional
$\|\cdot\|_{p,w,(0,\infty)}$ on ${\mathfrak M}(0,\infty)$ by
\begin{equation*}
\|f\|_{p,w,(0,\infty)} : = \left\{\begin{array}{cl}
\left(\int_{(0,\infty)} |f(x)|^p w(x)\,dx \right)^{\frac{1}{p}} & \qquad\mbox{if}\qquad p < \infty \\
\esup_{(0,\infty)} |f(x)|w(x) & \qquad\mbox{if}\qquad p = \infty.
\end{array}
\right.
\end{equation*}

If, in addition, $w\in {\mathcal W} (0,\infty)$, then the weighted Lebesgue space
$L^p(w,(0,\infty))$ is given by
\begin{equation*}
L^p(w,(0,\infty)) = \big\{f\in {\mathfrak M}(0,\infty):\,\, \|f\|_{p,w,(0,\infty)} < \infty \big\},
\end{equation*}
and it is equipped with the quasi-norm $\|\cdot\|_{p,w,(0,\infty)}$.

When $w\equiv 1$ on $(0,\infty)$, we write simply $L^p (0,\infty)$ and
$\|\cdot\|_{p,(0,\infty)}$ instead of $L^p(w,(0,\infty))$ and $\|\cdot\|_{p,w,(0,\infty)}$,
respectively.

In this paper we consider monotone quasilinear operators on ${\mathfrak M}^+ (0,\infty)$, that is, operators $T:{\mathfrak M}^+ (0,\infty) \rightarrow
{\mathfrak M}^+ (0,\infty)$ satisfying the following conditions:

{\rm (i)} $T(\lambda f) = \lambda Tf$ for all $\lambda \ge 0$ and $f
\in {\mathfrak M}^+(0,\infty)$;

{\rm (ii)} $Tf(x) \le C Tg(x)$ for almost all $x \in {\mathbb R}_+$
if $f(x) \le g(x)$ for almost all $x \in {\mathbb R}_+$, with
constant $C > 0$ independent of $f$ and $g$;

{\rm (iii)} $T(f+g) \le C (T f + Tg)$ for all $f,\,g \in {\mathfrak M}^+ (0,\infty)$, with a constant $C > 0$ independent of $f$ and $g$.

The aim of the paper is to present new equivalence theorems for the boundedness of the composition of a quasilinear opeator $T$ with the Hardy and Copson operators in weighted Lebesgue spaces, which allow to change inequalities 
\begin{equation}\label{eq.1}
\Bigg\| T \Bigg( \int_0^{x} h  \Bigg) \Bigg\|_{q,w,(0,\infty)} \le C \, \norm{h}_{p,v,(0,\infty)} \, , \quad h \in \mathfrak{M}^{+} (0,\infty) 
\end{equation}
and   
\begin{equation}\label{eq.2}
\Bigg\| T \Bigg( \int_{x}^{\infty}h \Bigg) \Bigg\|_{q,w,(0,\infty)} \le C \, \norm{h}_{p,v,(0,\infty)} \, , \quad h \in \mathfrak{M}^{+} (0,\infty)
\end{equation}
with an equivalent inequalities of the same type with weighted $L^1$-norms on the right-hand sides. 

The following two theorems allow to reduce the iterated inequalities
to the inequalities on the cone of monotone functions. 
\begin{thm}\cite[Theorem 3.1]{GogMusIHI}\label{RT.thm.main.3}
	Let $0 < q \le \infty$, $1 < p < \infty$, and $T$ be a monotone quasilinear operator on ${\mathfrak M}^+ (0,\infty)$. Assume that $w \in {\mathcal W}(0,\infty)$
	and $v \in {\mathcal W}(0,\infty)$ is such that
	\begin{equation}\label{RT.thm.main.3.eq.0}
	\int_0^x v^{1-p^{\prime}}(t)\,dt < \infty, \quad x \in(0,\infty).
	\end{equation}
	Then inequality \eqref{eq.1} holds iff
	\begin{equation}\label{RT.thm.main.3.eq.2}
	\Big\| T \Big( \Phi^2 f \Big) \Big\|_{q, w, (0,\infty)} \le C\, \| f \|_{p, \phi,(0,\infty)}, \quad f \in
	{\mathfrak M}^{+,\downarrow} (0,\infty)
	\end{equation}
	holds, where
	$$
	\phi (x) \equiv \phi\big[v;p\big](x) : = \Bigg( \int_0^x v^{1-{p}^{\prime}}(t)\,dt \Bigg)^{- \frac{p^{\prime}}{p^{\prime} + 1}}	v^{1-{p}^{\prime}}(x)
	$$
	and
	$$
	\Phi(x) \equiv \Phi \big[v;p\big](x) : = \int_0^x \phi(t)\,dt = \Bigg( \int_0^x	v^{1-{p}^{\prime}}(t)\,dt \Bigg)^{\frac{1}{p^{\prime} + 1}}
	$$
	for all $x \in(0,\infty)$.
\end{thm}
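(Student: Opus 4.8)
The plan is to prove the two implications of the equivalence separately. Throughout I would set $V(x):=\int_0^x v^{1-p^{\prime}}$, so that $\phi=V^{-p^{\prime}/(p^{\prime}+1)}V^{\prime}$, $\Phi=V^{1/(p^{\prime}+1)}$ and hence $V=\Phi^{p^{\prime}+1}$ and $\phi=(p^{\prime}+1)\Phi^{\prime}$, and I would begin by recording the pointwise identity
\begin{equation*}
\phi(t)^{p}\,\Phi(t)^{p}\,v(t)=\phi(t),\qquad t\in(0,\infty),
\end{equation*}
which is the single computation that makes the weighted $L^{p}$ norms on the two sides match up; it reduces, after inserting the definitions, to the elementary relations $p\,p^{\prime}=p+p^{\prime}$ and $p(1-p^{\prime})=-p^{\prime}$.

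For the implication \eqref{eq.1} $\Rightarrow$ \eqref{RT.thm.main.3.eq.2}, let $f\in{\mathfrak M}^{+,\downarrow}(0,\infty)$. Since $(\Phi^{2})^{\prime}=2\Phi\Phi^{\prime}$ is a constant multiple of $\Phi\phi$ and $f$ is non-increasing, one gets the pointwise bound $\Phi(x)^{2}f(x)\le c_{p}\int_0^{x}\Phi(t)\phi(t)f(t)\,dt$. Putting $h:=\Phi\phi f\in{\mathfrak M}^{+}(0,\infty)$, properties (i)--(ii) of $T$ give $T(\Phi^{2}f)(x)\le C\,T\big(\int_0^{x}h\big)(x)$ for a.e.\ $x$, so \eqref{eq.1} yields $\|T(\Phi^{2}f)\|_{q,w,(0,\infty)}\le C\,\|h\|_{p,v,(0,\infty)}$, while the identity above gives $\|h\|_{p,v,(0,\infty)}=\|f\|_{p,\phi,(0,\infty)}$. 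Together these are \eqref{RT.thm.main.3.eq.2}; note that only the homogeneity (i) and the monotonicity (ii) of $T$ enter here.

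For the converse I would fix $h\in{\mathfrak M}^{+}(0,\infty)$ with $\|h\|_{p,v,(0,\infty)}<\infty$, write $H(x):=\int_0^x h$, and -- this is the delicate choice -- take the dominating function to be
\begin{equation*}
f(x):=\frac{H(x)}{\Phi(x)^{2}}+\int_{x}^{\infty}\frac{h(t)}{\Phi(t)^{2}}\,dt .
\end{equation*}
Differentiation gives $f^{\prime}(x)=-2\Phi^{\prime}(x)\Phi(x)^{-3}H(x)\le 0$, so $f\in{\mathfrak M}^{+,\downarrow}(0,\infty)$ (and is finite a.e., which follows from the second bound below), whereas plainly $\Phi(x)^{2}f(x)\ge H(x)$. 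Hence property (ii) of $T$ together with \eqref{RT.thm.main.3.eq.2} give $\|T(\int_0^{x}h)\|_{q,w,(0,\infty)}\le C\,\|f\|_{p,\phi,(0,\infty)}$, and, since $p>1$, the triangle inequality reduces everything to the two estimates
\begin{equation*}
\|H\Phi^{-2}\|_{p,\phi,(0,\infty)}\le C\,\|h\|_{p,v,(0,\infty)},\qquad \big\|\textstyle\int_{\cdot}^{\infty}h\Phi^{-2}\big\|_{p,\phi,(0,\infty)}\le C\,\|h\|_{p,v,(0,\infty)}.
\end{equation*}
The first is the weighted Hardy inequality with left weight $\phi\Phi^{-2p}$ and right weight $v$; the second, after the substitution $g=h\Phi^{-2}$, the weighted Copson inequality with left weight $\phi$ and right weight $\Phi^{2p}v$. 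Both hold by Muckenhoupt's criterion: using $\phi=(p^{\prime}+1)\Phi^{\prime}$ and $V=\Phi^{p^{\prime}+1}$ one finds $\int_r^{\infty}\phi\Phi^{-2p}\le C\,\Phi(r)^{1-2p}$ and $\int_r^{\infty}(\Phi^{2p}v)^{1-p^{\prime}}\le C\,\Phi(r)^{1-p^{\prime}}$, so that in each of the two Muckenhoupt products the exponent of $\Phi(r)$ comes out to $\tfrac{1}{p}+\tfrac{1}{p^{\prime}}-1=0$ and the suprema are finite (this is where hypothesis \eqref{RT.thm.main.3.eq.0} is used, to make $\Phi$ finite and strictly positive on $(0,\infty)$). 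Thus $\|f\|_{p,\phi,(0,\infty)}\le C\,\|h\|_{p,v,(0,\infty)}$ and \eqref{eq.1} follows.

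The main obstacle is precisely the choice of $f$ in the converse. The seemingly natural candidate $f(x)=\sup_{t\ge x}H(t)\Phi(t)^{-2}$ is non-increasing and dominates $H\Phi^{-2}$, but estimating $\|f\|_{p,\phi,(0,\infty)}$ through the pointwise H\"older bound $H(t)\le(\int_0^t h^{p}v)^{1/p}V(t)^{1/p^{\prime}}$ loses a logarithmic factor when the measure $h^{p}v\,dx$ concentrates near the origin, and so is too lossy to recover \eqref{eq.1}. The additive correction $\int_x^{\infty}h\Phi^{-2}$ is exactly what makes $f$ non-increasing while keeping $\|f\|_{p,\phi,(0,\infty)}$ comparable to $\|h\|_{p,v,(0,\infty)}$; the cost is the two Muckenhoupt verifications above, whose exponents fortuitously cancel.
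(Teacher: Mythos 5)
Your proof is correct. Note first that this theorem is quoted in the paper from \cite{GogMusIHI} without proof, so there is no in‑text argument to compare against; judged on its own, your argument is complete. The identity $\phi^{p}\Phi^{p}v=\phi$ and the forward direction (writing $\Phi(x)^{2}=c_{p}\int_{0}^{x}\Phi\phi$ and using that $f$ is non-increasing, then invoking only (i)--(ii) of $T$) constitute the standard reduction step. Your converse, via the explicit non-increasing majorant $f=H\Phi^{-2}+\int_{\cdot}^{\infty}h\Phi^{-2}$ together with two Muckenhoupt verifications whose $\Phi$-exponents cancel to $\tfrac1p+\tfrac1{p'}-1=0$, is a clean self-contained route; the source \cite{GogMusIHI}, like the machinery of Section~\ref{RT} of this paper (cf.\ Theorems \ref{theorem1}--\ref{theorem2}), packages the same mechanism through the averaging operators $V^{-\alpha-1}\int_{0}^{x}hV^{\alpha}$, whose constructed majorant reduces, after an integration by parts, to a two-term expression of exactly your shape --- so the route is equivalent in substance, with your version trading the abstract reduction theorem for two elementary weighted Hardy/Copson checks. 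Two cosmetic points: (a) the displayed normalization $\int_{0}^{x}\phi=(\int_{0}^{x}v^{1-p'})^{1/(p'+1)}$ in the statement is off by the harmless factor $p'+1$, and your convention $\phi=(p'+1)\Phi'$ is the internally consistent one; (b) in the converse you should dispose of the trivial case $\|h\|_{p,v,(0,\infty)}=\infty$ and record explicitly that $\|h\|_{p,v,(0,\infty)}<\infty$ forces $H(x)\le\|h\|_{p,v,(0,\infty)}V(x)^{1/p'}<\infty$ and, via your second Muckenhoupt bound, $\int_{x}^{\infty}h\Phi^{-2}<\infty$ for every $x>0$, so that $f$ is a genuine finite, locally absolutely continuous, non-increasing function and the a.e.\ differentiation argument is legitimate. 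Neither point affects correctness.
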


\begin{thm}\cite[Theorem 3.4]{GogMusIHI}\label{RT.thm.main.4}
	Let $0 < q \le \infty$, $1 < p < \infty$, and $T$ be a monotone quasilinear operator on ${\mathfrak M}^+ (0,\infty)$. Assume that $w \in {\mathcal W}(0,\infty)$
	and $v \in {\mathcal W}(0,\infty)$ is such that
	\begin{equation}\label{RT.thm.main.4.eq.0}
	\int_x^{\infty} v^{1-p^{\prime}}(t)\,dt < \infty, \quad x \in(0,\infty).
	\end{equation}
	Then inequality \eqref{eq.2} holds iff
	\begin{equation}\label{RT.thm.main.4.eq.2}
	\Big\| T \Big( \Psi^2 f \Big) \Big\|_{q, w, (0,\infty)} \le C\, \| f \|_{p, \psi,(0,\infty)}, \quad f \in {\mathfrak M}^{+,\uparrow}(0,\infty)
	\end{equation}
	holds, where
	$$
	\psi (x) \equiv \psi \big[v;p\big](x): = \Bigg( \int_x^{\infty}
	v^{1-{p}^{\prime}}(t)\,dt \Bigg)^{- \frac{p^{\prime}}{p^{\prime} + 1}}
	v^{1-{p}^{\prime}}(x)
	$$
	and
	$$
	\Psi(x) \equiv \Psi \big[v;p\big](x) : = \int_x^{\infty} \psi (t)\,dt = \Bigg( \int_x^{\infty}
	v^{1-{p}^{\prime}}(t)\,dt \Bigg)^{\frac{1}{p^{\prime} + 1}}
	$$
	for all $x \in(0,\infty)$.	
\end{thm}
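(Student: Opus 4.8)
The plan is to deduce the statement from Theorem~\ref{RT.thm.main.3} by conjugating all of the data with the inversion $\iota(x):=1/x$ of $(0,\infty)$, under which the Copson operator $h\mapsto\int_x^{\infty}h$ is turned into the Hardy operator $h\mapsto\int_0^{x}h$.

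For $h\in\mathfrak M^+(0,\infty)$ set $\tilde h(s):=s^{-2}h(1/s)$; the substitution $t=1/s$ gives $\int_{1/x}^{\infty}h(t)\,dt=\int_0^{x}\tilde h(s)\,ds$ for every $x\in(0,\infty)$, and $h\mapsto\tilde h$ is a bijection of $\mathfrak M^+(0,\infty)$. Define
\[
\tilde T g:=\big(T(g\circ\iota)\big)\circ\iota,\qquad \tilde v(s):=s^{2p-2}v(1/s),\qquad \tilde w(s):=s^{-2}w(1/s)
\]
(for $q=\infty$ take $\tilde w:=w\circ\iota$). Since $(p-1)(p'-1)=1$, one has $s^{(2p-2)(1-p')}=s^{-2}$, hence $\int_0^{x}\tilde v^{\,1-p'}=\int_{1/x}^{\infty}v^{1-p'}<\infty$ for all $x$, so $\tilde v$ satisfies \eqref{RT.thm.main.3.eq.0}. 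It is routine to check that $\tilde v,\tilde w\in\mathcal W(0,\infty)$, that $\tilde T$ is again a monotone quasilinear operator on $\mathfrak M^+(0,\infty)$ — each of the properties (i)--(iii) survives because $\iota$ is a bijection carrying null sets to null sets — and, via $t=1/s$, that $\|\tilde h\|_{p,\tilde v,(0,\infty)}=\|h\|_{p,v,(0,\infty)}$ and $\|\tilde T g\|_{q,\tilde w,(0,\infty)}=\|T(g\circ\iota)\|_{q,w,(0,\infty)}$ for every admissible $g$.

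With this dictionary the proof becomes a transport of structure. As $x\mapsto\int_x^{\infty}h$ equals $g\circ\iota$ for $g:=\int_0^{\cdot}\tilde h$, inequality \eqref{eq.2} for $(T,p,q,w,v)$ is literally inequality \eqref{eq.1} for $(\tilde T,p,q,\tilde w,\tilde v)$, so by Theorem~\ref{RT.thm.main.3} it is equivalent to
\[
\big\|\tilde T\big(\tilde\Phi^{2}f\big)\big\|_{q,\tilde w,(0,\infty)}\le C\,\|f\|_{p,\tilde\phi,(0,\infty)},\qquad f\in\mathfrak M^{+,\downarrow}(0,\infty),
\]
with $\tilde\Phi,\tilde\phi$ associated with $\tilde v$ as there. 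A direct computation gives
\[
\tilde\Phi(x)=\Big(\int_0^{x}\tilde v^{1-p'}\Big)^{\frac1{p'+1}}=\Big(\int_{1/x}^{\infty}v^{1-p'}\Big)^{\frac1{p'+1}}=\Psi(1/x),\qquad \tilde\phi(x)=x^{-2}\psi(1/x).
\]
Putting $F(x):=f(1/x)$ — which ranges over $\mathfrak M^{+,\uparrow}(0,\infty)$ exactly as $f$ ranges over $\mathfrak M^{+,\downarrow}(0,\infty)$ — we get $(\tilde\Phi^{2}f)\circ\iota=\Psi^{2}F$, hence $\|\tilde T(\tilde\Phi^{2}f)\|_{q,\tilde w}=\|T(\Psi^{2}F)\|_{q,w}$, while $t=1/x$ turns $\|f\|_{p,\tilde\phi}$ into $\|F\|_{p,\psi}$. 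The displayed inequality is therefore exactly \eqref{RT.thm.main.4.eq.2}, and the equivalence follows.

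The whole argument is essentially bookkeeping; the one place needing genuine care is verifying that $\tilde T$ inherits (i)--(iii), in particular that the almost-everywhere monotonicity condition (ii) is preserved under precomposition with $\iota$, together with keeping the Jacobian factors correct in the norm identities and treating $q=\infty$ separately. One could instead proceed directly, without the change of variables: for \eqref{RT.thm.main.4.eq.2}$\,\Rightarrow\,$\eqref{eq.2} dominate $\int_x^{\infty}h$ pointwise by $\Psi(x)^{2}f(x)$, where $f$ is the least non-decreasing majorant of $x\mapsto\Psi(x)^{-2}\int_x^{\infty}h$, and bound $\|f\|_{p,\psi}$ by a constant times $\|h\|_{p,v}$ through a weighted estimate for the attached supremum operator (using $\Psi^{p'+1}=\int_{\cdot}^{\infty}v^{1-p'}$); for the converse, given non-decreasing $f$ choose $h$ with $\int_x^{\infty}h$ comparable to $\Psi(x)^{2}f(x)$ and $\|h\|_{p,v}$ controlled by $\|f\|_{p,\psi}$. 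The inversion route is shorter because it re-uses Theorem~\ref{RT.thm.main.3} verbatim.
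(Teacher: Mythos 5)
Your argument is correct, and it is worth noting that the present paper does not prove Theorem~\ref{RT.thm.main.4} at all: it is imported from \cite{GogMusIHI}, where it and its Hardy counterpart Theorem~\ref{RT.thm.main.3} are established by two parallel direct arguments via reduction theorems on the cones of monotone functions. Your route is genuinely different: you obtain the Copson version from the Hardy version by conjugating every object with the inversion $\iota(x)=1/x$. The bookkeeping checks out. With $\tilde h(s)=s^{-2}h(1/s)$ one has $\int_{1/x}^{\infty}h=\int_0^x\tilde h$; the choice $\tilde v(s)=s^{2p-2}v(1/s)$ gives simultaneously $\|\tilde h\|_{p,\tilde v,(0,\infty)}=\|h\|_{p,v,(0,\infty)}$ and, since $(p-1)(p'-1)=1$, $\tilde v^{\,1-p'}(s)=s^{-2}v^{1-p'}(1/s)$, whence $\int_0^x\tilde v^{\,1-p'}=\int_{1/x}^{\infty}v^{1-p'}$, so \eqref{RT.thm.main.4.eq.0} converts into \eqref{RT.thm.main.3.eq.0} and $\tilde\Phi=\Psi\circ\iota$, $\tilde\phi(x)=x^{-2}\psi(1/x)$; the choice $\tilde w(s)=s^{-2}w(1/s)$ (and $w\circ\iota$ when $q=\infty$) yields $\|\tilde Tg\|_{q,\tilde w,(0,\infty)}=\|T(g\circ\iota)\|_{q,w,(0,\infty)}$; properties (i)--(iii) pass to $\tilde T$ because $\iota$ is an involution preserving null sets; and $f\mapsto f\circ\iota$ is a bijection between $\mathfrak M^{+,\downarrow}(0,\infty)$ and $\mathfrak M^{+,\uparrow}(0,\infty)$. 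All the norm identities are exact equalities, so even the constant $C$ transfers verbatim and the equivalence follows from Theorem~\ref{RT.thm.main.3}. What your approach buys is economy: the Copson statement comes for free once the Hardy one is known, with no need to repeat the duality or supremum-operator work. What the direct approach of \cite{GogMusIHI} buys is independence from Theorem~\ref{RT.thm.main.3} and a template usable when no order-reversing change of variables is available. Your closing sketch of a direct proof is plausible but only outlined; the inversion argument is the part of your submission that is actually complete.
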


Various equivalence statements were obtained using the previous two theorems in \cite{GogMusIHI}. We quote here the following ones, which are special cases of \cite[Corollary 3.3]{GogMusIHI} and \cite[Corollary 3.6]{GogMusIHI} when $\delta = p$, respectively.  
\begin{cor}\cite[Corollary 3.3]{GogMusIHI}\label{cor.1}
	Let $0 < q \le \infty$, $1 < p < \infty$, and $T$ be a monotone quasilinear operator on ${\mathfrak M}^+ (0,\infty)$. Assume that $w \in {\mathcal W}(0,\infty)$
	and $v \in {\mathcal W}(0,\infty)$ is such that \eqref{RT.thm.main.3.eq.0} holds. Then inequality \eqref{eq.1}  holds iff
	$$
	\Bigg\| T \Bigg( \Phi(x)^{2 \big(1 - \frac{1}{p}\big)} \bigg( \int_0^x h \bigg)^{\frac{1}{p}} \Bigg) \Bigg\|_{q,w,(0,\infty)} \le C \, \|h\|_{1,\Phi^{-1},(0,\infty)}, \quad h \in {\mathfrak M}^+ (0,\infty) 
	$$
	holds.
\end{cor}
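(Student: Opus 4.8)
The plan is to derive Corollary~\ref{cor.1} directly from Theorem~\ref{RT.thm.main.3} by performing an explicit substitution that converts a statement about arbitrary non-increasing functions $f$ into a statement about Hardy-type averages of arbitrary non-negative $h$. First I would invoke Theorem~\ref{RT.thm.main.3}: under the standing hypothesis \eqref{RT.thm.main.3.eq.0}, inequality \eqref{eq.1} is equivalent to
\begin{equation*}
\Big\| T \Big( \Phi^2 f \Big) \Big\|_{q, w, (0,\infty)} \le C\, \| f \|_{p, \phi,(0,\infty)}, \quad f \in {\mathfrak M}^{+,\downarrow} (0,\infty),
\end{equation*}
with $\phi$ and $\Phi$ as defined there. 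So it suffices to show that this cone inequality is in turn equivalent to the claimed inequality with the weighted $L^1$-norm $\|h\|_{1,\Phi^{-1},(0,\infty)}$ on the right.

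The key computational step is the change of variables linking $f$ and $h$. Given $h\in{\mathfrak M}^+(0,\infty)$, set
\begin{equation*}
f(x) := \Phi(x)^{-2/p}\Bigg(\int_0^x h\Bigg)^{1/p},
\end{equation*}
so that $\Phi(x)^2 f(x) = \Phi(x)^{2(1-1/p)}\big(\int_0^x h\big)^{1/p}$, matching the argument of $T$ on the left-hand side of the corollary. I must check two things. \emph{Monotonicity:} since $\Phi$ is non-decreasing (it is an integral of the non-negative $\phi$) and $\int_0^x h$ is non-decreasing, while $\Phi^{-2/p}$ is non-increasing, one needs $f$ to be non-increasing; this is where the precise exponent $2/p$ and the structure of $\Phi$ enter, and I expect a short argument using that $\Phi(x)^{p+1}\big/\int_0^x h$ being the right combination—more concretely, one verifies $f$ is non-increasing by noting $f(x)^p = \big(\int_0^x v^{1-p'}\big)^{-\frac{p}{p'+1}\cdot\frac{p'}{p}}\cdot\ldots$; I would instead argue via the substitution in the \emph{reverse} direction, writing any non-increasing $f$ as such an average. \emph{Norm identity:} compute $\|f\|_{p,\phi}^p = \int_0^\infty \Phi(x)^{-2}\big(\int_0^x h\big)\,\phi(x)\,dx$ and show, via Fubini and the identity $\Phi'=\phi$, $\Phi^{-2}\phi = -(\Phi^{-1})'$, that this equals $\int_0^\infty h(t)\,\Phi(t)^{-1}\,dt = \|h\|_{1,\Phi^{-1}}^p/1$—i.e. the $p$-th power of the right-hand side norm up to the obvious identification. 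Indeed $\int_0^\infty\!\big(\int_0^x h\big)\big({-}(\Phi^{-1})'(x)\big)\,dx = \int_0^\infty h(t)\Phi(t)^{-1}\,dt$ after integrating by parts, provided the boundary terms vanish.

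For the converse direction I would start from an arbitrary $f\in{\mathfrak M}^{+,\downarrow}(0,\infty)$ and produce $h$ with $\int_0^x h = \Phi(x)^2 f(x)^p$—this requires $x\mapsto\Phi(x)^2 f(x)^p$ to be non-decreasing and locally absolutely continuous so that $h$ can be taken as its derivative; monotonicity of $f\mapsto$ this map is not automatic, so I would instead use the standard device of approximating by, or reducing to, a suitable dense subclass, or use the truncation/limiting argument that appears in \cite{GogMusIHI} for the analogous corollaries. Then by construction the two sides match and the two inequalities transfer with the same constant $C$ (up to the multiplicative constants built into the definition of a monotone quasilinear operator, which are harmless).

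The main obstacle I anticipate is the monotonicity bookkeeping: showing that the substitution produces a genuinely non-increasing $f$ in one direction, and a genuinely non-decreasing primitive $\Phi^2 f^p$ in the other, without which the passage between the cone inequality and the $L^1$-form is not legitimate. This is exactly the point where the specific exponent $2(1-1/p)$ on $\Phi$ and the identity $\Phi^{p'+1}=\int_0^x v^{1-p'}$ are forced, and I expect the cleanest route is to exploit the one-to-one correspondence $h\leftrightarrow f$ given by $\int_0^x h \leftrightarrow \Phi(x)^2 f(x)^p$ restricted to the class where both sides are well-behaved, then extend by homogeneity (condition (i) on $T$) and a routine density/monotone-convergence argument. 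The norm identity itself, by contrast, is a one-line Fubini computation once the correspondence is set up.
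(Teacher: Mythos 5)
There is a genuine gap, and it sits exactly at the point you flag but do not resolve: monotonicity. Your plan is to feed $f(x)=\Phi(x)^{-2/p}\big(\int_0^x h\big)^{1/p}$ into the cone inequality of Theorem \ref{RT.thm.main.3}, but this $f$ is the product of a non-increasing factor and a non-decreasing factor and is in general not non-increasing (take $h$ supported far to the right, where $\Phi$ is nearly constant: $f$ jumps up). Symmetrically, $\Phi^2 f^p$ is in general not non-decreasing for non-increasing $f$, so the inverse correspondence $\int_0^x h=\Phi^2f^p$ does not exist either. Neither defect is repaired by ``a routine density/monotone-convergence argument'': the cone ${\mathfrak M}^{+,\downarrow}(0,\infty)$ and the class of functions of the form $\Phi^{-2/p}\big(\int_0^xh\big)^{1/p}$ are genuinely different sets. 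The actual mechanism (in \cite{GogMusIHI}, and in the analogous proof of Theorem \ref{main.thm.2} in Section \ref{MR} of this paper) is the reduction theorem, Theorem \ref{theorem1}: one first substitutes $f\mapsto f^{1/p}$ and passes to the operator $T_1f:=\big(T(\Phi^2f^{1/p})\big)^p$ --- note that $f\mapsto T(\Phi^2 f^{1/p})$ itself is homogeneous of degree $1/p$, not $1$, so condition (i) fails for it and the reduction machinery cannot be applied to it without taking the $p$-th power, another point your write-up glosses over --- and then replaces the resulting cone inequality $\|T_1f\|_{q/p,w,(0,\infty)}\le C^p\|f\|_{1,\phi,(0,\infty)}$, $f\in{\mathfrak M}^{+,\downarrow}(0,\infty)$, by the inequality tested on the averaged functions $V(x)^{-(\alpha+1)}\int_0^xh\,V^\alpha$ with $V=\Phi$, $\alpha=1$. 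The two directions of that replacement rest on two specific devices, not on density: the pointwise bound $f(x)\le\frac{\alpha+1}{V(x)^{\alpha+1}}\int_0^xf\,V^\alpha v$ valid for non-increasing $f$ (sufficiency), and the majorization of the non-monotone averaged test function by a monotone function of the type $(\alpha+1)\int_x^\infty\frac{v(t)}{V(t)^{\alpha+2}}\big(\int_0^t h\,V^\alpha\big)\,dt$ whose $L^1(v)$-norm is controlled by $\|h\|_{1,(0,\infty)}$ via Fubini (necessity). These are precisely the steps your proposal is missing.

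A secondary remark on the bookkeeping: your norm computation, carried out correctly, gives $\|f\|_{p,\phi,(0,\infty)}^p=\int_0^\infty\big(\int_0^xh\big)\big(-(\Phi^{-1})'(x)\big)\,dx\le\int_0^\infty h\,\Phi^{-1}$, that is, $\|f\|_{p,\phi,(0,\infty)}\le\|h\|_{1,\Phi^{-1},(0,\infty)}^{1/p}$ rather than $\|h\|_{1,\Phi^{-1},(0,\infty)}$. Indeed the two sides of the target inequality as you write it are homogeneous of different degrees in $h$ ($1/p$ on the left, $1$ on the right), so the power of the weighted $L^1$-norm must be tracked carefully through the $p$-th-power step described above; this is handled automatically once one works with $T_1$ and the exponent $q/p$ as in the proof of Theorem \ref{main.thm.2}.
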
 

\begin{cor}\cite[Corollary 3.6]{GogMusIHI}\label{cor.2}
	Let $0 < q \le \infty$, $1 < p < \infty$, and $T$ be a monotone quasilinear operator on ${\mathfrak M}^+ (0,\infty)$. Assume that $w \in {\mathcal W}(0,\infty)$
	and $v \in {\mathcal W}(0,\infty)$ is such that \eqref{RT.thm.main.4.eq.0} holds. Then inequality \eqref{eq.2} holds iff
	$$
	\Bigg\| T \Bigg( \Psi(x)^{2 \big(1 - \frac{1}{p}\big)} \bigg( \int_x^{\infty} h \bigg)^{\frac{1}{p}} \Bigg) \Bigg\|_{q,w,(0,\infty)} \le C \, \|h\|_{1,\Psi^{-1},(0,\infty)}, \quad h \in {\mathfrak M}^+ (0,\infty) 
	$$
	holds.
\end{cor}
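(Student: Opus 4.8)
The plan is to \emph{deduce Corollary \ref{cor.2} from Corollary \ref{cor.1}} by conjugating with the reflection $x\mapsto 1/x$, which turns the Copson operator into the Hardy operator. For $g\in\mathfrak{M}^{+}(0,\infty)$ set $(Rg)(x):=g(1/x)$; then $R$ is an order-preserving involution of $\mathfrak{M}^{+}(0,\infty)$. Given a monotone quasilinear $T$, I would put $\widetilde T:=R\circ T\circ R$; since $R^{2}=\mathrm{id}$ and $R$ preserves a.e.\ inequalities, $\widetilde T$ again obeys (i)--(iii), and one has the identity $R\,(Tk)=\widetilde T(Rk)$ for every $k\in\mathfrak{M}^{+}(0,\infty)$. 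Finally set $\widetilde v(u):=u^{2p-2}v(1/u)$ and let $w_{1}$ be the weight for which $\|g\|_{q,w,(0,\infty)}=\|Rg\|_{q,w_{1},(0,\infty)}$ for all $g$ (so $w_{1}(u)=u^{-2}w(1/u)$ if $q<\infty$ and $w_{1}(u)=w(1/u)$ if $q=\infty$); clearly $\widetilde v,w_{1}\in\mathcal{W}(0,\infty)$.

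The engine of the proof is a short list of change-of-variables identities. Writing $h_{1}(u):=u^{-2}h(1/u)$, which is a bijection of $\mathfrak{M}^{+}(0,\infty)$, one gets $\int_{x}^{\infty}h=\int_{0}^{1/x}h_{1}$ and $\|h\|_{p,v,(0,\infty)}=\|h_{1}\|_{p,\widetilde v,(0,\infty)}$; and the algebraic coincidence $(2p-2)(1-p')=-2$ makes all Jacobian powers cancel, so that $\int_{0}^{u}\widetilde v^{\,1-p'}=\int_{1/u}^{\infty}v^{1-p'}$. In particular hypothesis \eqref{RT.thm.main.4.eq.0} for $v$ is exactly hypothesis \eqref{RT.thm.main.3.eq.0} for $\widetilde v$, and
\begin{equation*}
\Phi\big[\widetilde v;p\big](u)=\Bigg(\int_{0}^{u}\widetilde v^{\,1-p'}\Bigg)^{\frac{1}{p'+1}}=\Bigg(\int_{1/u}^{\infty}v^{1-p'}\Bigg)^{\frac{1}{p'+1}}=\Psi\big[v;p\big](1/u)=(R\Psi)(u).
\end{equation*}
Feeding these into the identity $R\,(Tk)=\widetilde T(Rk)$ --- applied once with $k(x)=\int_{x}^{\infty}h$ and once with $k(x)=\Psi(x)^{2(1-1/p)}(\int_{x}^{\infty}h)^{1/p}$ --- and substituting $s=1/u$ in the weighted $L^{1}$-norm to get $\|h\|_{1,\Psi^{-1},(0,\infty)}=\|h_{1}\|_{1,\Phi[\widetilde v;p]^{-1},(0,\infty)}$, one finds that inequality \eqref{eq.2} for $(T,v,w)$ \emph{is} inequality \eqref{eq.1} for $(\widetilde T,\widetilde v,w_{1})$, and the inequality asserted in Corollary \ref{cor.2} for $(T,v,w)$ \emph{is} the one asserted in Corollary \ref{cor.1} for $(\widetilde T,\widetilde v,w_{1})$. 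Since $0<q\le\infty$, $1<p<\infty$, $\widetilde T$ is monotone quasilinear, and $w_{1},\widetilde v\in\mathcal{W}(0,\infty)$ with $\widetilde v$ satisfying \eqref{RT.thm.main.3.eq.0}, Corollary \ref{cor.1} applied to $(\widetilde T,\widetilde v,w_{1},p,q)$ yields the desired equivalence. In this route nothing is deep: the only items requiring care are the verification of (i)--(iii) for $\widetilde T$, the bookkeeping of the $u^{-2}$ Jacobian (and the harmless $q=\infty$ exception in the choice of $w_{1}$), and the elementary substitution $x\mapsto 1/x$.

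If instead one wants a direct argument paralleling the passage from Theorem \ref{RT.thm.main.3} to Corollary \ref{cor.1}, the plan would be: by Theorem \ref{RT.thm.main.4}, reduce \eqref{eq.2} to the cone inequality $\|T(\Psi^{2}f)\|_{q,w,(0,\infty)}\le C\|f\|_{p,\psi,(0,\infty)}$ for $f\in\mathfrak{M}^{+,\uparrow}(0,\infty)$, and then pass between this and the $L^{1}$-inequality through the correspondence $\int_{x}^{\infty}h\leftrightarrow\Psi(x)^{2}f(x)^{p}$ --- given $h$, take $f^{p}$ to be the least non-decreasing majorant of $\Psi^{-2}\int_{x}^{\infty}h$; given $f$, take $\int_{x}^{\infty}h$ to be the least non-increasing majorant of $\Psi^{2}f^{p}$. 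The pointwise domination together with (i) and (ii) transfers the operator-norm side, after which one must establish the weighted-norm equivalence $\|f\|_{p,\psi,(0,\infty)}^{p}\approx\int_{0}^{\infty}h\,\Psi^{-1}$ using Fubini's theorem and the structural relations between $\psi$ and $\Psi$ (essentially $\psi\approx-\Psi'$, $\int_{t}^{\infty}\psi\approx\Psi(t)$, $\int_{0}^{s}\Psi^{-2}\psi\approx\Psi(s)^{-1}$). In that approach the main obstacle is precisely this norm equivalence: one has to control the least-monotone-majorant (supremum) operators entering it, and to reduce beforehand to bounded $f$ with $f(0^{+})=0$ so that the boundary terms coming from the Stieltjes integration by parts vanish.
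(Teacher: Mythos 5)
Your primary route (deduction from Corollary \ref{cor.1} via the involution $x\mapsto 1/x$) is correct, and it is genuinely different from how this statement is actually established. The paper itself does not reprove Corollary \ref{cor.2} --- it is quoted from \cite{GogMusIHI} --- but the proof there, and the one all the machinery of this paper is set up for, is the analogue of the proof of Theorem \ref{main.thm.2} in Section \ref{MR}: apply Theorem \ref{RT.thm.main.4} to reduce \eqref{eq.2} to $\|T(\Psi^2 f)\|_{q,w,(0,\infty)}\le C\|f\|_{p,\psi,(0,\infty)}$ on ${\mathfrak M}^{+,\uparrow}(0,\infty)$, raise to the power $p$ to view this as an $L^1$ inequality for the monotone quasilinear operator $T_1f=(T(\Psi^2f^{1/p}))^p$, and then invoke the reduction theorem (Theorem \ref{theorem2} with $\alpha=1$, i.e.\ the original \cite{GogStep} case, with $v=\psi$, $V_*=\Psi$), which after the substitution $h\Psi\to h$ produces exactly the displayed inequality. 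Your conjugation argument instead trades the Copson-side statement for the Hardy-side one: I checked the bookkeeping and it is sound --- $\widetilde T=R\circ T\circ R$ does satisfy (i)--(iii) since $x\mapsto 1/x$ preserves null sets, $(2p-2)(1-p')=-2$ does make $\int_0^u\widetilde v^{\,1-p'}=\int_{1/u}^\infty v^{1-p'}$, hence $\Phi[\widetilde v;p]=R\Psi[v;p]$ and hypothesis \eqref{RT.thm.main.4.eq.0} transforms into \eqref{RT.thm.main.3.eq.0}, and $h\mapsto h_1$ is a bijection of ${\mathfrak M}^+(0,\infty)$ preserving the constants. What your approach buys is that only one of the two dual corollaries ever needs a real proof; what it costs is that Corollary \ref{cor.1} must already be available as input, whereas the paper's scheme proves both corollaries independently and symmetrically from the two reduction theorems. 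Your second, direct sketch is essentially a hand-rolled proof of Theorem \ref{theorem2} via least monotone majorants; the obstacle you identify (the equivalence $\|f\|^p_{p,\psi,(0,\infty)}\approx\int_0^\infty h\,\Psi^{-1}$ and control of the majorant operator) is real, and it is precisely what the reduction theorem avoids by instead constructing the test function explicitly as $f(x)=(\alpha+1)\int_0^x \Psi(t)^{-\alpha-2}\psi(t)\big(\int_t^\infty h\Psi^\alpha\big)\,dt$ and using Fubini; so if you pursue that route you should follow the paper's construction rather than the majorant one.
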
 

Despite the fact that these results allow to change inequalities \eqref{eq.1} and \eqref{eq.2} with an equivalent inequalities of the same type with weighted $L^1$-norms on the right hand sides, however, an application of them when solving, for instance, weighted iterated Hardy type inequalities, provide us, in general, with so-called "flipped" conditions due to multipliers $\Phi^{2 (1 - {1} / {p})}$ and $\Psi^{2 (1 - {1} / {p})}$ on the left hand sides of them, respectively. 

In this paper, we make a simple yet surprisingly efficient observation which enables us to get statements avoiding these "unwanted" expressions on the left-hand sides of equivalent inequalities. 

The main results of the paper reads as follows:

\begin{thm}\label{main.thm.1}
	Let $ 0 < q < \infty $, $1 < p < \infty$, and $T$ be a monotone quasilinear operator on ${\mathfrak M}^+ (0,\infty)$. Assume that $w \in \mathcal{W}(0,\infty)$ and $v \in \mathcal{W}(0,\infty)$ is such that \eqref{RT.thm.main.3.eq.0} holds.  
	Then inequality \eqref{eq.1} holds if and only if 
	$$
	\Bigg\| \Bigg( T\bigg( \int_0^{x} h  \bigg)^{\frac{1}{p}} \Bigg)^p \Bigg\|_{q/p,w,(0,\infty)} \le C^p \, \norm{h}_{1, \Phi^{1 - 2p},\I} \, , \quad h \in \mathfrak{M}^{+} (0,\infty) 
	$$
	holds.
\end{thm}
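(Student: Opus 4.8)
The plan is to deduce Theorem \ref{main.thm.1} from Theorem \ref{RT.thm.main.3} by the same ``simple observation'' hinted at in the introduction: namely, applying the hypothesis that $T$ is monotone quasilinear to a suitably chosen \emph{new} operator, so that the troublesome multiplier $\Phi^{2(1-1/p)}$ produced by Corollary \ref{cor.1} is absorbed into the operator rather than left sitting on the outside. Concretely, given the operator $T$ and the exponent $p$, I would introduce the operator $S$ defined on ${\mathfrak M}^+(0,\infty)$ by
$$
Sg := \big( T(g^{1/p}) \big)^p ,
$$
and first check that $S$ is again a monotone quasilinear operator: homogeneity (i) is clear since $T(\lambda^{1/p} g^{1/p})^p = \lambda T(g^{1/p})^p$; monotonicity (ii) follows because $g\le h$ implies $g^{1/p}\le h^{1/p}$, hence $T(g^{1/p})\le C\,T(h^{1/p})$ pointwise, and raising to the $p$-th power preserves this with constant $C^p$; and the sub-additivity (iii) uses $(a+b)^{1/p}\le a^{1/p}+b^{1/p}$ for $p\ge 1$ together with (ii) and (iii) for $T$ and the inequality $(x+y)^p\le 2^{p-1}(x^p+y^p)$. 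So $S$ falls under the scope of all the quoted results.

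Next I would apply Theorem \ref{RT.thm.main.3} (or directly Corollary \ref{cor.1}) with $T$ replaced by $S$ and with the exponent $q$ replaced by $q/p$. The point is that inequality \eqref{eq.1} for the operator $S$ with target exponent $q/p$ reads
$$
\Bigg\| S\Bigg(\int_0^x h\Bigg)\Bigg\|_{q/p,w,(0,\infty)} = \Bigg\|\Bigg(T\Big(\int_0^x h\Big)^{1/p}\Bigg)^p\Bigg\|_{q/p,w,(0,\infty)} \le \widetilde C\,\|h\|_{p,v,(0,\infty)} ,
$$
which, after taking $p$-th roots, is nothing but inequality \eqref{eq.1} for $T$ itself (with $\widetilde C = C^p$); so the left-hand side of Theorem \ref{main.thm.1} is equivalent to \eqref{eq.1}. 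Then I apply Corollary \ref{cor.1} to $S$ with exponent $q/p$: this yields that \eqref{eq.1} for $S$ is equivalent to
$$
\Bigg\| S\Bigg( \Phi(x)^{2(1-\frac1p)}\Big(\int_0^x h\Big)^{1/p}\Bigg)\Bigg\|_{q/p,w,(0,\infty)} \le \widetilde C\,\|h\|_{1,\Phi^{-1},(0,\infty)} .
$$
Unwinding the definition of $S$, the left-hand side becomes $\big\| \big( T(\Phi^{2(1-1/p)/p}(\int_0^x h)^{1/p^2}) \big)^{?}\cdots$ --- here I must be careful to track exponents; the cleaner route is to note $S(u) = T(u^{1/p})^p$, so $S\big(\Phi^{2(1-1/p)}(\int_0^x h)^{1/p}\big) = T\big(\Phi^{2(1-1/p)/p}(\int_0^x h)^{1/p^2}\big)^p$. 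This is \emph{not} yet the desired form, so the final move is a change of variable in $h$: replacing $h$ by an appropriate power/multiple so that $\Phi^{2(1-1/p)/p}(\int_0^x h)^{1/p^2}$ collapses to $T(\int_0^x \tilde h)^{1/p}$-type expression inside, and simultaneously transforming the right-hand norm $\|h\|_{1,\Phi^{-1}}$ into $\|\tilde h\|_{1,\Phi^{1-2p}}$.

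The main obstacle, and the step that needs the most care, is exactly this last bookkeeping of exponents and the change of variable on the right-hand side. One natural substitution is to write the inner argument $\Phi(x)^{2(1-1/p)}\big(\int_0^x h\big)^{1/p}$ as $\big(\int_0^x \tilde h\big)^{1/p}$, which forces $\int_0^x \tilde h = \Phi(x)^{2(p-1)}\int_0^x h$; differentiating, $\tilde h(x) = 2(p-1)\Phi(x)^{2p-3}\phi(x)\int_0^x h + \Phi(x)^{2(p-1)} h(x)$. This is awkward because $\tilde h$ is not simply a pointwise rescaling of $h$. The resolution, which I expect the authors use, is instead to exploit the monotone structure differently: apply Corollary \ref{cor.1} to $S$ and then recognize that since we only need the \emph{statement} of equivalence (both directions), we may apply Corollary \ref{cor.1} a \emph{second} time --- now to $T$ itself in the original form --- and compare. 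More precisely, Corollary \ref{cor.1} applied to $T$ says \eqref{eq.1} is equivalent to the displayed inequality with $T\big(\Phi^{2(1-1/p)}(\int_0^x h)^{1/p}\big)$ and weight $\Phi^{-1}$; running the $S$-argument in parallel and matching the two normalizations (both being equivalent to \eqref{eq.1}) pins down the weight $\Phi^{1-2p}$ on the right: indeed $\Phi^{1-2p} = \Phi^{-1}\cdot\Phi^{2-2p} = \Phi^{-1}\cdot\Phi^{-2(p-1)}$, and the extra factor $\Phi^{-2(p-1)}$ is precisely what is produced when the multiplier $\Phi^{2(1-1/p)}$ is pulled \emph{out} of $T$ through homogeneity after the substitution $h\mapsto \Phi^{-2(p-1)}h$ inside the Hardy operator is undone. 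I would present this as: substitute $h \rightsquigarrow g$ via $\int_0^x h = \Phi(x)^{-2(p-1)}\int_0^x g$ is \emph{not} monotone-friendly, so instead verify the equivalence directly by testing: assume the $L^1$-inequality of Theorem \ref{main.thm.1} and, for arbitrary $h$, feed the function whose Hardy transform at $x$ equals $\Phi(x)^{2(p-1)}\int_0^x h$ into it --- using property (ii) and the elementary fact $\int_0^x \Phi(t)^{2(p-1)} h(t)\,dt \le \Phi(x)^{2(p-1)}\int_0^x h$ (since $\Phi$ is nondecreasing) to control things from one side, and a reverse estimate from the other, which is where the real work lies; then invoke Corollary \ref{cor.1} for $S$ to close the loop. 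I would flag from the outset that keeping the two quasilinearity constants from $T$ under control through the power maps $u\mapsto u^{1/p}$ and $u\mapsto u^p$ is routine but must be done explicitly to justify that $C^p$ is the correct constant on the right.
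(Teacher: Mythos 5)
There is a genuine gap, in fact two. First, your opening reduction is false: the inequality $\big\| S\big(\int_0^x h\big)\big\|_{q/p,w,(0,\infty)} \le \widetilde C\,\|h\|_{p,v,(0,\infty)}$ is \emph{not} equivalent to \eqref{eq.1}. Taking $p$-th roots gives $\big\| T\big((\int_0^x h)^{1/p}\big)\big\|_{q,w,(0,\infty)} \le \widetilde C^{1/p}\,\|h\|_{p,v,(0,\infty)}^{1/p}$, in which $T$ acts on $(\int_0^x h)^{1/p}$ rather than on $\int_0^x h$, and no substitution in $h$ converts one into the other while keeping an $L^p(v)$ norm on the right (the map $h\mapsto h^{1/p}$ does not commute with the Hardy operator). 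The $p$-th--root substitution is only legitimate \emph{after} one has passed to the cone of monotone functions via Theorem \ref{RT.thm.main.3}, because $f\mapsto f^{1/p}$ is a bijection of ${\mathfrak M}^{+,\downarrow}(0,\infty)$ onto itself; this is the paper's first step, and your proposal skips it. Second, the difficulty you run into at the end --- that $\tilde h(x) = 2(p-1)\Phi(x)^{2p-3}\phi(x)\int_0^x h + \Phi(x)^{2(p-1)}h(x)$ is not a pointwise rescaling of $h$ --- is real and is not resolved by ``matching normalizations''; you explicitly defer ``the reverse estimate \dots where the real work lies,'' so the argument is not closed.

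The missing tool is precisely what Section \ref{RT} of the paper supplies: the reduction theorems (Theorems \ref{theorem1} and \ref{theorem2}) with a \emph{free} parameter $\alpha$, generalizing the $\alpha=1$ case that underlies Corollary \ref{cor.1}. The paper's route is: reduce \eqref{eq.1} to $\| T(\Phi^2 f)\|_{q,w,(0,\infty)} \le C\|f\|_{p,\phi,(0,\infty)}$ on ${\mathfrak M}^{+,\downarrow}(0,\infty)$; substitute $f\mapsto f^{1/p}$ and raise to the $p$-th power to get an $L^1(\phi)$ inequality for the quasilinear operator $T_1 f = \big(T(\Phi^2 f^{1/p})\big)^p$ (your $S$ is this operator without the crucial $\Phi^2$); then apply Theorem \ref{theorem1} with $\alpha = 2p-1$, so that the factor $V(x)^{-(\alpha+1)/p} = \Phi(x)^{-2}$ produced by the reduction cancels the $\Phi^2$ exactly, leaving $\big(T\big(\int_0^x h\,\Phi^{2p-1}\big)^{1/p}\big)^p$ and, after renaming $h\Phi^{2p-1}\rightsquigarrow h$, the weight $\Phi^{1-2p}$. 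Sticking with Corollary \ref{cor.1} (i.e.\ $\alpha=1$) cannot produce this cancellation, which is why your exponent bookkeeping does not close.
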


\begin{thm}\label{main.thm.2}
	Let $ 0 < q < \infty $, $1 < p < \infty$, and $T$ be a monotone quasilinear operator on ${\mathfrak M}^+ (0,\infty)$. Assume that $w \in \mathcal{W}(0,\infty)$ and $v \in \mathcal{W}(0,\infty)$ is such that \eqref{RT.thm.main.4.eq.0} holds.
	Then inequality \eqref{eq.2} holds if and only if 
	$$
	\Bigg\| \Bigg( T\bigg( \int_{x}^{\infty}h \bigg)^{\frac{1}{p}} \Bigg)^p \Bigg\|_{q/p,w,(0,\infty)} \le C^p \, \norm{h}_{1, \Psi^{1 - 2p},\I} \, , \quad h \in \mathfrak{M}^{+} (0,\infty)
	$$
	holds.
\end{thm}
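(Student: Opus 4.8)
The plan is to deduce Theorem \ref{main.thm.2} from Corollary \ref{cor.2} together with a single algebraic observation about the quasilinearity of $T$. Starting from the equivalent inequality furnished by Corollary \ref{cor.2},
\begin{equation*}
\Bigg\| T \Bigg( \Psi(x)^{2 \big(1 - \frac{1}{p}\big)} \bigg( \int_x^{\infty} g \bigg)^{\frac{1}{p}} \Bigg) \Bigg\|_{q,w,(0,\infty)} \le C \, \|g\|_{1,\Psi^{-1},(0,\infty)}, \quad g \in {\mathfrak M}^+ (0,\infty),
\end{equation*}
I would perform the substitution $g = \Psi^{p(1-2p)\cdot\frac{p}{?}}\,h$ chosen precisely so that the multiplier $\Psi(x)^{2(1-1/p)}$ inside $T$ gets absorbed. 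Concretely, since $T$ is monotone quasilinear and the argument of $T$ is evaluated pointwise at $x$, one has $\Psi(x)^{2(1-1/p)}\big(\int_x^\infty g\big)^{1/p} = \big(\Psi(x)^{2(p-1)}\int_x^\infty g\big)^{1/p}$; writing $g(t) = \Psi(t)^{-2(p-1)}h(t)$ would be ideal but $\Psi$ depends on $t$, not $x$, so instead I substitute $g$ so that $\int_x^\infty g$ reproduces $\int_x^\infty h$ up to the right power of $\Psi(x)$. The clean route is: replace the running argument by first noting, via homogeneity (i) of $T$ applied at each fixed $x$, that it suffices to understand $T$ on arguments of the form $\big(\int_x^\infty h\big)^{1/p}$ multiplied by a function of $x$; then the substitution $h \mapsto \Psi^{\alpha}h$ for the appropriate $\alpha$ converts the weighted $L^1$-norm $\|g\|_{1,\Psi^{-1}}$ into $\|h\|_{1,\Psi^{1-2p}}$ and simultaneously pulls the factor $\Psi(x)^{2(1-1/p)}$ out of the argument of $T$.

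The second ingredient is to rewrite the left-hand side. Using homogeneity (i) of $T$ at each point $x$, $T\big(c(x)\,F(x,\cdot)\big) = c(x)\,T\big(F(x,\cdot)\big)$ for $c(x)\ge 0$, so the factor $\Psi(x)^{2(1-1/p)}$, once it appears outside, can be combined with $w$. Then I would invoke the elementary identity, valid since raising to the power $p$ is a bijection on $[0,\infty)$ and $\|\cdot\|_{q,w}^q = \big\|(\cdot)^p\big\|_{q/p, w}^{q/p}$: for any nonnegative $u$,
\begin{equation*}
\big\| u \big\|_{q,w,(0,\infty)}^{p} = \Big\| u^{p} \Big\|_{q/p,w,(0,\infty)},
\end{equation*}
applied with $u(x) = T\big( (\int_x^\infty h)^{1/p}\big)(x)$. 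Here I must be a little careful: $T\big((\int_x^\infty h)^{1/p}\big)$ is a function of the outer variable whose $p$-th power is what appears in the statement, and the exponent bookkeeping ($2(1-1/p)\cdot$ something versus $1-2p$) has to match exactly; tracking these exponents is the one place where a slip is easy, so I would verify $\frac{p'}{p'+1}$-type constants cancel by writing everything in terms of $p$ only.

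The main obstacle I anticipate is the interplay between homogeneity (i) and the fact that $\Psi$ appearing as a multiplier inside $T$ is a function of the \emph{same} variable $x$ at which the output of $T$ is evaluated — one must justify pulling $\Psi(x)^{2(1-1/p)}$ through $T$ pointwise in $x$, which is exactly what (i) permits since for each fixed $x$ the quantity $\Psi(x)^{2(1-1/p)}$ is a nonnegative scalar. Equivalently, one views $T$ as acting on the function $t\mapsto (\int_t^\infty h)^{1/p}$ and the factor as a constant depending on the evaluation point; this is the "simple yet surprisingly efficient observation" the introduction advertises. Once that is clear, the proof is: (a) apply Corollary \ref{cor.2}; (b) substitute $g=\Psi^{\alpha}h$ with $\alpha$ forced by matching the $L^1$ weights, getting $\alpha$ and checking it produces $\Psi^{1-2p}$ on the right; (c) use (i) to extract $\Psi(x)^{2(1-1/p)}$ from inside $T$; (d) absorb it into $w$ and use the $L^{q}$–$L^{q/p}$ rescaling identity to reach the displayed inequality; and (e) reverse every step, all of which are equivalences. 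The symmetry with Theorem \ref{main.thm.1} (same argument with Corollary \ref{cor.1}, $\Phi$ in place of $\Psi$, and $\int_0^x$ in place of $\int_x^\infty$) means I would prove one in detail and remark that the other is analogous.
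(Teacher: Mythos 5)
There is a genuine gap, and it sits exactly at the step you flag as the ``simple yet surprisingly efficient observation'': the claimed identity $T\big(c(x)\,F(x,\cdot)\big)=c(x)\,T\big(F(x,\cdot)\big)$ is not licensed by property {\rm (i)}. In the expression $\big\| T \big( \Psi(x)^{2(1-1/p)} (\int_x^{\infty} g)^{1/p} \big) \big\|_{q,w,(0,\infty)}$ from Corollary \ref{cor.2}, the variable $x$ is the argument of the \emph{input} function $u(x)=\Psi(x)^{2(1-1/p)}(\int_x^\infty g)^{1/p}$, to which $T$ is then applied as a whole; it is not the evaluation point of $Tu$. Property {\rm (i)} gives $T(\lambda f)=\lambda Tf$ only for a nonnegative \emph{constant} $\lambda$, and for a general monotone quasilinear $T$ (e.g.\ $Tf(x)=\big(\int_x^\infty f^r u\big)^{1/r}$, which is the operator used in Section \ref{appl}) one has $T(\Psi^{a}g)(x)\neq \Psi(x)^{a}\,Tg(x)$. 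For the same reason your substitution $g=\Psi^{\alpha}h$ cannot absorb the multiplier: $\Psi(x)^{2(p-1)}\int_x^\infty \Psi^\alpha h$ can never be made equal to $\int_x^\infty h$ for all $x$, because the outer factor depends on $x$ while the substitution acts under the integral sign --- a tension you notice but do not resolve. So steps (b)--(d) of your plan do not go through, and indeed the introduction of the paper states explicitly that Corollaries \ref{cor.1} and \ref{cor.2} are precisely the statements whose ``unwanted'' multipliers the new theorems are designed to avoid; one should not expect to reach Theorem \ref{main.thm.2} by massaging Corollary \ref{cor.2}.

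The missing ingredient is the generalized reduction theorem of Section \ref{RT}. The paper's route is: apply Theorem \ref{RT.thm.main.4} to reduce \eqref{eq.2} to $\|T(\Psi^2 f)\|_{q,w}\le C\|f\|_{p,\psi}$ on the cone of non-decreasing $f$; substitute $f\mapsto f^{1/p}$ and raise to the power $p$ to get an $L^1(\psi)$-norm on the right; observe that $T_1 f:=\big(T(\Psi^2 f^{1/p})\big)^p$ is again monotone quasilinear; and then apply Theorem \ref{theorem2} with $V_*=\Psi$ and the specific choice $\alpha=2p-1$. That choice is engineered so that the factor $\Psi(x)^{-2p}$ produced by the reduction cancels the $\Psi(x)^2$ inside $T$ \emph{at the level of the input function}, via $\Psi(x)^2\big(\Psi(x)^{-2p}\int_x^\infty h\,\Psi^{2p-1}\big)^{1/p}=\big(\int_x^\infty h\,\Psi^{2p-1}\big)^{1/p}$ --- a pointwise identity between functions, which is legitimate, as opposed to commuting a function multiplier past $T$, which is not. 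A final substitution $h\Psi^{2p-1}\mapsto h$ yields the weight $\Psi^{1-2p}$. Your proposal never invokes Theorem \ref{theorem2} (with $\alpha\neq 1$), and without it the weight $\Psi^{1-2p}$ and the disappearance of the multiplier cannot both be achieved.
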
 

Note that similar results can be proved when $q = \infty$.

\begin{thm}\label{main.thm.1.0.0.0}
	Let $1 < p < \infty$, and $T$ be a monotone quasilinear operator on ${\mathfrak M}^+ (0,\infty)$. Assume that $w \in \mathcal{W}(0,\infty)$ and $v \in \mathcal{W}(0,\infty)$ is such that \eqref{RT.thm.main.3.eq.0} holds.
    Then the inequality
	\begin{equation*}
	\Bigg\| T \Bigg( \int_0^{x} h  \Bigg) \Bigg\|_{\infty,w,(0,\infty)} \le C \, \norm{h}_{p,v,(0,\infty)} \, , \quad h \in \mathfrak{M}^{+} (0,\infty) 
	\end{equation*}
	holds if and only if 
	$$
	\Bigg\| \Bigg( T\bigg( \int_0^{x} h  \bigg)^{\frac{1}{p}} \Bigg)^p \Bigg\|_{\infty,w^p,(0,\infty)} \le C^p \, \norm{h}_{1, \Phi^{1 - 2p},\I} \, , \quad h \in \mathfrak{M}^{+} (0,\infty) 
	$$
	holds.
\end{thm}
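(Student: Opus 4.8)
The plan is to treat Theorem~\ref{main.thm.1.0.0.0} just as (I expect) the proof of Theorem~\ref{main.thm.1} is organized, replacing every $L^{q}_{w}$- and $L^{q/p}_{w}$-quasinorm by an essential supremum. First I would reformulate both sides. Under the present hypotheses Theorem~\ref{RT.thm.main.3} applies with $q=\infty$ and reduces the first displayed inequality to the cone inequality
\begin{equation}\label{pf.cone.inf}
\bigl\|T(\Phi^{2}f)\bigr\|_{\infty,w,(0,\infty)}\le C\,\|f\|_{p,\phi,(0,\infty)},\qquad f\in\mathfrak M^{+,\downarrow}(0,\infty).
\end{equation}
On the other hand, since $a\mapsto a^{p}$ is a continuous increasing bijection of $[0,\infty]$ and $T$ maps $\mathfrak M^{+}(0,\infty)$ into itself, $\|g^{p}\|_{\infty,w^{p},(0,\infty)}=\|g\|_{\infty,w,(0,\infty)}^{p}$ for every $g\in\mathfrak M^{+}(0,\infty)$; applied with $g=T\bigl((\int_0^{x}h)^{1/p}\bigr)$ this shows that the second displayed inequality is equivalent to
\begin{equation}\label{pf.star.inf}
\Bigl\|T\Bigl(\Bigl(\int_0^{x}h\Bigr)^{1/p}\Bigr)\Bigr\|_{\infty,w,(0,\infty)}\le C\,\|h\|_{1,\Phi^{1-2p},(0,\infty)}^{1/p},\qquad h\in\mathfrak M^{+}(0,\infty).
\end{equation}
Hence it is enough to show that \eqref{pf.cone.inf} and \eqref{pf.star.inf} are equivalent, using only properties {\rm (i)}--{\rm (iii)} of $T$ and two elementary one-dimensional majorizations.

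To get \eqref{pf.star.inf} from \eqref{pf.cone.inf}, fix $h\in\mathfrak M^{+}(0,\infty)$; I may assume $\|h\|_{1,\Phi^{1-2p}}<\infty$ (else \eqref{pf.star.inf} is trivial), which forces $h$ to be locally integrable near $0$, so that $H:=\int_0^{x}h$ is finite, nondecreasing and vanishes at $0$. Put $f(x):=\esup_{t\ge x}\Phi(t)^{-2}H(t)^{1/p}$; then $f\in\mathfrak M^{+,\downarrow}(0,\infty)$ and $\Phi^{2}f\ge H^{1/p}$ pointwise, so by {\rm (ii)} and \eqref{pf.cone.inf}
\[
\bigl\|T(H^{1/p})\bigr\|_{\infty,w}\lesssim\bigl\|T(\Phi^{2}f)\bigr\|_{\infty,w}\lesssim\|f\|_{p,\phi}.
\]
It remains to estimate $\|f\|_{p,\phi}$. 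Using that $\Phi$ is nondecreasing,
\[
f(x)^{p}=\esup_{t\ge x}\frac{H(t)}{\Phi(t)^{2p}}\le\frac{H(x)}{\Phi(x)^{2p}}+\int_x^{\infty}\frac{h(\tau)}{\Phi(\tau)^{2p}}\,d\tau,
\]
and integrating this against $\phi$, applying Fubini's theorem, and using that $\int_0^{\tau}\phi$ is comparable to $\Phi(\tau)$ and that $\int_\tau^{\infty}\Phi^{-2p}\phi\lesssim\Phi(\tau)^{1-2p}$ (here $2p>1$ is used), I get $\|f\|_{p,\phi}^{p}\lesssim\|h\|_{1,\Phi^{1-2p}}$, i.e.\ $\|f\|_{p,\phi}\lesssim\|h\|_{1,\Phi^{1-2p}}^{1/p}$; combined with the previous display this is \eqref{pf.star.inf}.

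For the converse, fix $f\in\mathfrak M^{+,\downarrow}(0,\infty)$ and put $g:=f^{p}$, which is again nonincreasing. By \eqref{RT.thm.main.3.eq.0} the function $\Phi$, hence $\Phi^{2p}$, is locally absolutely continuous and $\Phi(0^{+})=0$; so $h:=g\,(\Phi^{2p})'$ belongs to $\mathfrak M^{+}(0,\infty)$, and since $g$ is nonincreasing,
\[
\int_0^{x}h=\int_0^{x}g\,d(\Phi^{2p})\ge g(x)\,\Phi(x)^{2p},\qquad\text{so}\qquad\Bigl(\int_0^{x}h\Bigr)^{1/p}\ge\Phi^{2}f.
\]
By {\rm (ii)} and \eqref{pf.star.inf}, $\bigl\|T(\Phi^{2}f)\bigr\|_{\infty,w}\lesssim\bigl\|T\bigl((\int_0^{x}h)^{1/p}\bigr)\bigr\|_{\infty,w}\lesssim\|h\|_{1,\Phi^{1-2p}}^{1/p}$, while $\|h\|_{1,\Phi^{1-2p}}=\int_0^{\infty}g\,\Phi^{1-2p}(\Phi^{2p})'=2p\int_0^{\infty}g\,\Phi'$, which is comparable to $\|g\|_{1,\phi}=\|f\|_{p,\phi}^{p}$. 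Combining the last two estimates gives \eqref{pf.cone.inf}.

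I expect the only real point to be arranging the cancellation of the $\Phi$-powers in the two majorizations, and this is exactly the ``simple observation'' of the introduction: raising the left-hand norm to the power $p$ --- equivalently, feeding $T$ the function $(\int_0^{x}h)^{1/p}$ and working with the \emph{nonincreasing} function $g=f^{p}$ --- is precisely what lets the multiplier $\Phi^{2(1-1/p)}$ of Corollary~\ref{cor.1} be absorbed into the Hardy operator by means of the two elementary monotonicity estimates above. Everything else is routine: one checks the measurability and monotonicity of $f$ (it is monotone, hence measurable) and of $h$ (a product of measurable functions), justifies Fubini's theorem (all integrands are nonnegative), records that $\int_0^{x}\phi$ and $\Phi$ are comparable and that $\Phi$ is locally absolutely continuous with $\Phi(0^{+})=0$ (both from \eqref{RT.thm.main.3.eq.0}), and handles the degenerate cases ($h\notin L^{1}_{\loc}$ near $0$, or $f$ not a.e.\ finite) by the usual truncation; none of this affects the constants.
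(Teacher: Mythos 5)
Your argument is correct, and its opening moves coincide with what the paper intends for the $q=\infty$ case: you invoke Theorem~\ref{RT.thm.main.3} with $q=\infty$ to pass to the cone inequality $\|T(\Phi^{2}f)\|_{\infty,w,(0,\infty)}\le C\|f\|_{p,\phi,(0,\infty)}$ for $f\in\mathfrak M^{+,\downarrow}(0,\infty)$, and you correctly isolate the identity $\|g^{p}\|_{\infty,w^{p},(0,\infty)}=\|g\|_{\infty,w,(0,\infty)}^{p}$, which is precisely why the weight $w^{p}$ (rather than $w$) appears in this statement. Where you genuinely diverge is in the second reduction. The paper's template (see the proof of Theorem~\ref{main.thm.2}) substitutes $f\mapsto f^{1/p}$, packages the left-hand side as a new monotone quasilinear operator $T_{1}f=\big(T(\Phi^{2}f^{1/p})\big)^{p}$, and then cites the general reduction theorem (Theorem~\ref{theorem1}, stated for $0<q\le\infty$) with $\alpha=2p-1$ and weight $w^{p}$, followed by the change of variable $h\mapsto h\Phi^{1-2p}$. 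You instead prove that reduction by hand: in one direction via the least nonincreasing majorant $f(x)=\esup_{t\ge x}\Phi(t)^{-2}\big(\int_{0}^{t}h\big)^{1/p}$, the splitting $\esup_{t\ge x}\Phi(t)^{-2p}\int_{0}^{t}h\le \Phi(x)^{-2p}\int_{0}^{x}h+\int_{x}^{\infty}h\,\Phi^{-2p}$ and Fubini; in the other via the test function $h=f^{p}(\Phi^{2p})'$ and the monotonicity estimate $\int_{0}^{x}f^{p}\,d(\Phi^{2p})\ge f(x)^{p}\Phi(x)^{2p}$. Both majorizations are sound: $2p>1$ makes $\int_{x}^{\infty}\Phi^{-2p}\phi\lesssim\Phi(x)^{1-2p}$ work, $\Phi(0^{+})=0$ and the local absolute continuity of $\Phi^{2p}$ follow from \eqref{RT.thm.main.3.eq.0}, $\int_{0}^{x}\phi$ is $\Phi(x)$ up to a constant, and the bookkeeping of the $\Phi$-powers cancels exactly as needed. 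In effect you have re-proved, in this concrete instance, the two directions of Theorem~\ref{theorem1} (your majorant construction replaces the paper's integral representation $f(x)=(\alpha+1)\int_{0}^{x}vV^{-\alpha-2}\big(\int_{0}^{t}hV^{\alpha}\big)\,dt$, and your choice $h=f^{p}(\Phi^{2p})'$ replaces the identity $1=(\alpha+1)V(x)^{-\alpha-1}\int_{0}^{x}V^{\alpha}v$). Your route buys self-containedness and makes transparent why the multiplier $\Phi^{2(1-1/p)}$ of Corollary~\ref{cor.1} disappears; the paper's route buys modularity, since the single application of Theorem~\ref{theorem1} disposes of Theorems~\ref{main.thm.1}, \ref{main.thm.2}, \ref{main.thm.1.0.0.0} and \ref{main.thm.2.0.0.0} uniformly.
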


\begin{thm}\label{main.thm.2.0.0.0}
	Let $1 < p < \infty$, and $T$ be a monotone quasilinear operator on ${\mathfrak M}^+ (0,\infty)$. Assume that $w \in \mathcal{W}(0,\infty)$ and $v \in \mathcal{W}(0,\infty)$ is such that \eqref{RT.thm.main.4.eq.0} holds.
	Then the inequality 
	\begin{equation*}
	\Bigg\| T \Bigg( \int_{x}^{\infty}h \Bigg) \Bigg\|_{\infty,w,(0,\infty)} \le C \, \norm{h}_{p,v,(0,\infty)} \, , \quad h \in \mathfrak{M}^{+} (0,\infty)
	\end{equation*}
	holds if and only if 
	$$
	\Bigg\| \Bigg( T\bigg( \int_{x}^{\infty}h \bigg)^{\frac{1}{p}} \Bigg)^p \Bigg\|_{\infty,w^p,(0,\infty)} \le C^p \, \norm{h}_{1, \Psi^{1 - 2p},\I} \, , \quad h \in \mathfrak{M}^{+} (0,\infty)
	$$
	holds.
\end{thm}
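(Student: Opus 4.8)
The plan is to deduce Theorem~\ref{main.thm.2.0.0.0} by repeating, with $q$ replaced by $\infty$, the argument used for Theorem~\ref{main.thm.2}; the only point at which the exponent enters is through an elementary identity for the essential supremum functional, namely $\big\|g^{p}\big\|_{\infty,w^{p},\I}=\big\|g\big\|_{\infty,w,\I}^{p}$ for $g\in\mathfrak M^{+}(\I)$, which holds because $\esup_{\I}(g^{p}w^{p})=(\esup_{\I}gw)^{p}$, and which is exactly what turns $w$ into $w^{p}$ in the conclusion. Applying it with $g=T\big(\big(\int_{x}^{\infty}h\big)^{1/p}\big)$ and taking $p$-th roots, the right-hand inequality of Theorem~\ref{main.thm.2.0.0.0} is seen to be equivalent to
\[
\esup_{x\in\I}T\Big(\big(\textstyle\int_{x}^{\infty}h\big)^{1/p}\Big)(x)\,w(x)\ \le\ C\,\|h\|_{1,\Psi^{1-2p},\I}^{1/p},\qquad h\in\mathfrak M^{+}(\I),
\]
which I call $(\ast)$. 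On the other side, Corollary~\ref{cor.2} with $q=\infty$ identifies \eqref{eq.2} (with $q=\infty$) with the inequality $(\star)$ of exactly the same shape in which $T\big(\Psi(x)^{2(1-1/p)}\big(\int_{x}^{\infty}g\big)^{1/p}\big)(x)\,w(x)$ sits under the essential supremum and $\|g\|_{1,\Psi^{-1},\I}^{1/p}$ on the right. So it remains to prove $(\ast)\Leftrightarrow(\star)$, and here the ``simple observation'' is the identity $\Psi(x)^{2(1-1/p)}\big(\int_{x}^{\infty}g\big)^{1/p}=\big(\Psi(x)^{2(p-1)}\int_{x}^{\infty}g\big)^{1/p}$: the troublesome multiplier is absorbed into the inner integrand, and $\Psi^{2(p-1)}\int_{x}^{\infty}g$ is still non-increasing (a product of two non-negative non-increasing functions), hence genuinely a Copson integral.

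For $(\star)\Rightarrow(\ast)$, given $h\in\mathfrak M^{+}(\I)$ I would take $g:=\Psi^{-2(p-1)}h$: then $\|g\|_{1,\Psi^{-1},\I}=\|h\|_{1,\Psi^{1-2p},\I}$ by a one-line weight computation, and, since $\Psi$ is non-increasing and $p>1$, $\Psi(t)^{-2(p-1)}\ge\Psi(x)^{-2(p-1)}$ for $t\ge x$, so $\Psi(x)^{2(p-1)}\int_{x}^{\infty}g\ge\int_{x}^{\infty}h$ and hence $\big(\int_{x}^{\infty}h\big)^{1/p}\le\Psi(x)^{2(1-1/p)}\big(\int_{x}^{\infty}g\big)^{1/p}$ pointwise; property~(ii) of $T$ together with $(\star)$ then delivers $(\ast)$. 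For the converse $(\ast)\Rightarrow(\star)$, given $g\in\mathfrak M^{+}(\I)$ I would set $u(x):=\Psi(x)^{2(p-1)}\int_{x}^{\infty}g$, which is non-negative and non-increasing, and write $u=\int_{x}^{\infty}h$ with $h:=-u'=-2(p-1)\Psi^{2p-3}\Psi'\int_{x}^{\infty}g+\Psi^{2(p-1)}g\ge0$ (a sum of non-negative terms, since $\Psi'\le0$); then $u^{1/p}=\Psi^{2(1-1/p)}\big(\int_{x}^{\infty}g\big)^{1/p}$, and, using $-\Psi^{-2}\Psi'=(\Psi^{-1})'$, Fubini's theorem and $\int_{0}^{t}(\Psi^{-1})'=\Psi(t)^{-1}-\Psi(0^{+})^{-1}\le\Psi(t)^{-1}$,
\[
\|h\|_{1,\Psi^{1-2p},\I}=2(p-1)\int_{0}^{\infty}(\Psi^{-1})'(x)\Big(\int_{x}^{\infty}g\Big)dx+\int_{0}^{\infty}g\,\Psi^{-1}\ \le\ (2p-1)\,\|g\|_{1,\Psi^{-1},\I};
\]
thus $(\ast)$, applied to this $h$, yields $(\star)$ for $g$. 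The standard truncation/monotone convergence argument takes care of the cases where $\int_{x}^{\infty}h$ or $\int_{x}^{\infty}g$ is infinite.

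The step I expect to be the main obstacle is the converse $(\ast)\Rightarrow(\star)$: one must hit on the substitution $u=\Psi^{2(p-1)}\int_{x}^{\infty}g$ --- admissible precisely because this product stays non-increasing, so that it really is a Copson integral $\int_{x}^{\infty}h$ --- and then verify the norm estimate $\|h\|_{1,\Psi^{1-2p},\I}\le(2p-1)\|g\|_{1,\Psi^{-1},\I}$ through the Fubini/integration-by-parts computation above. Everything else is formal bookkeeping, resting on the $\esup$-identity, on the absorption of the multiplier $\Psi^{2(1-1/p)}$ into the $p$-th root, and on the monotonicity (property~(ii)) of $T$.
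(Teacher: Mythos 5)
Your argument is correct, but it takes a genuinely different route from the one the paper has in mind. The paper proves Theorem~\ref{main.thm.2} by composing Theorem~\ref{RT.thm.main.4} with the new reduction Theorem~\ref{theorem2} applied with $\alpha=2p-1$ to the auxiliary operator $T_1f=\big(T(\Psi^2f^{1/p})\big)^p$, and for $q=\infty$ it simply asserts that the same scheme goes through once the identity $\|g^p\|_{\infty,w^p,\I}=\|g\|_{\infty,w,\I}^p$ replaces the $q/p$-power step. You instead start from Corollary~\ref{cor.2} (quoted for $0<q\le\infty$) and remove the multiplier $\Psi^{2(1-1/p)}$ by hand: you absorb it into the $p$-th root as $\Psi^{2(p-1)}\int_x^\infty g$, substitute $g=\Psi^{-2(p-1)}h$ in one direction, and in the other write $\Psi(x)^{2(p-1)}\int_x^\infty g=\int_x^\infty h$ with $h=-\frac{d}{dx}\big(\Psi^{2(p-1)}\int_x^\infty g\big)\ge0$ and verify $\|h\|_{1,\Psi^{1-2p},\I}\le(2p-1)\|g\|_{1,\Psi^{-1},\I}$ by Fubini. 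I checked these computations and they are sound; note that the degenerate cases need no truncation, since $\int_x^\infty g\le\Psi(x)\|g\|_{1,\Psi^{-1},\I}$ and $\int_x^\infty h\le\Psi(x)^{2p-1}\|h\|_{1,\Psi^{1-2p},\I}$ make the relevant integrals automatically finite whenever the right-hand sides are. What your route buys is that the generalized reduction theorems of Section~\ref{RT} (arbitrary $\alpha$) are not needed at all --- the $\alpha=1$ information already packaged in Corollary~\ref{cor.2} plus an elementary change of variable suffices; the cost is a computation tied to the specific weight $\Psi$, whereas the paper's scheme is uniform and mechanical in $q$ and in the operator. Two small caveats you should make explicit: you use Corollary~\ref{cor.2} with $\|h\|_{1,\Psi^{-1},\I}^{1/p}$ on the right-hand side rather than $\|h\|_{1,\Psi^{-1},\I}$ as literally printed --- this is the only homogeneity-consistent reading (the printed right-hand side is $1$-homogeneous in $h$ while the left-hand side is $1/p$-homogeneous), so it is the correct one, but it is a silent correction of the quoted statement; and your constants are only equivalent to $C$ (factors from property (ii) and the factor $(2p-1)^{1/p}$ enter), which is consistent with the paper's conventions but not with a literal ``same constant'' reading of the theorem.
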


Our approach to prove these new equivalence theorems is somehow an extension of the ideas from \cite{GogMusIHI}. To achieve the goal, we expand certain known reduction theorems from \cite{GogStep} in the next section of the present paper.

Obtained results show that in order to solve \eqref{eq.1} and \eqref{eq.2}, it is enough to know solutions of "similar" iterated inequalities with weighted $L^1$-norms on the right-hand sides only. This surprising property of the above-mentioned inequalities is demonstrated in the cases of weighted Hardy-type and weighted iterated Hardy-type inequalities in the last section of the paper.

Throughout the paper, we always denote by  $C$ a positive
constant, which is independent of main parameters but it may vary
from line to line. By $a\lesssim b$, ($b\gtrsim a$) we mean that $a\leq \lambda b$, where
$\lambda >0$ depends on inessential parameters. If $a\lesssim b$ and
$b\lesssim a$, we write $a\approx b$ and say that $a$ and $b$ are
equivalent.  We will denote by $\mathds{1}$ the function ${\mathds{1}}(x) =
1$, $x \in (0,\infty)$. Unless a special remark is made, the
differential element $dx$ is omitted when the integrals under
consideration are the Lebesgue integrals. We put $0 \cdot \infty = 0$, $\infty / \infty =
0$ and $0/0 = 0$. If $p\in (0,+\infty)$, we define $p' = p / (1 - p)$ when $p \in (0,1)$, $p' = p / (p - 1)$ when $p \in (1, \infty)$ and $p' = \infty$ when $p = 1$.

The paper is organised as follows. In Section \ref{RT}, we prove new reduction theorems.  Proofs of the main results of the paper are presented in Section \ref{MR}. In Section \ref{appl}, we apply new equivalency statements to the characterizations of weighted Hardy-type inequalities, give some historical remarks on the weighted iterated Hardy-type inequalities and illustrate an application of our main theorems to the solutions of these inequalities.


\section{Note on reduction theorems}\label{RT}

Many works have lately been devoted to the study of weighted inequalities for classical operators restricted on the cones of monotone functions, which are important in the study of Lorentz spaces; see, for example, \cite{AM}, \cite{CRS}, \cite{sawyer19990} and the recent survey \cite{GogStep}, as well as the literature listed there. The reduction approach has been a crucial instrument in studying such inequalities almost since the beginning, according to which a given inequality on cones of monotone functions can be equivalently replaced with some inequality of non-negative functions that is easier to characterise than the original.
		
The Sawyer duality principle \cite{sawyer19990} is a universal reduction tool for positive linear operators, allowing one to reduce a $L^p - L^q$ inequality for monotone functions with $1 < p,\, q < \infty$ to a more manageable inequality for arbitrary non-negative functions. This principle was extended by Stepanov in \cite{step_1993} to the case $0 < p < 1 < q < \infty$. In the same work, Stepanov applied a different approach to this problem, known as reduction theorems, which allowed the range of parameters to be extended to $1 < p < \infty$, $0 < q < \infty$. The case $p \le q$, $0 < p \le 1$ was alternatively characterized in \cite{Step_1993_JLMS}, \cite{carsor_1993}, \cite{burgold}, \cite{lai_1993}. Later on, some direct reduction theorems were found in \cite{gogpick_2207}, \cite{cgmp_2008}, \cite{GogStep1} and \cite{GogStep} involving supremum operators which work for the case $0 < q < p \le 1$.		
		
The following two statements are generalizations of the case when $p=1$ of \cite[Theorem 3.2]{GogStep} and \cite[Theorem 3.4]{GogStep}, respectively, and can be proved in the same way (cf. \cite[Theorems 2.3 and 2.5]{GogMusIHI}). 

\begin{thm} \label{theorem1}
	Let $ 0 < q \le \infty $, $ 0 < \alpha < \infty$, $w,v \in \mathcal{W}(0,\infty)$, and $T$ be a monotone quasilinear operator on ${\mathfrak M}^+ (0,\infty)$. Then the inequality 
	$$
	\norm{Tf}_{q,w,(0,\infty)} \le C \, \norm{f}_{1,v,\I}, \quad f \in \mathfrak{M}^{+,\downarrow} (0,\infty)
	$$
	holds if and only if the inequality
	$$
	\Bigg\| T \Bigg(\frac{1}{V(x)^{\alpha + 1}} \int_0^{x} h \, V^{\alpha} \Bigg) \Bigg\|_{q,w,(0,\infty)} \le C \, \norm{h}_{1,\I}, \quad h \in \mathfrak{M}^{+} (0,\infty) 
	$$
	holds, where
	$$
	V(x) : = \int_0^x v(t)\,dt, \quad x \in (0,\infty).
	$$
\end{thm}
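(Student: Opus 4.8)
The plan is to prove the two implications separately; the only property of $T$ that will be used is the monotonicity (ii). Throughout, write $Uh(x):=V(x)^{-\alpha-1}\int_0^x hV^\alpha$, so the second inequality reads $\norm{T(Uh)}_{q,w,\I}\le C\,\norm h_{1,\I}$, and note that each of the two inequalities is trivial when its right-hand side is infinite.

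For the sufficiency of the second inequality I would fix $f\in{\mathfrak M}^{+,\downarrow}(0,\infty)$ and test with $h:=(\alpha+1)fv$. Since $f$ is non-increasing, $f(t)\ge f(x)$ for $t\in(0,x)$, so
$$Uh(x)=\frac{\alpha+1}{V(x)^{\alpha+1}}\int_0^x f(t)v(t)V(t)^\alpha\,dt\ge\frac{(\alpha+1)f(x)}{V(x)^{\alpha+1}}\int_0^x v(t)V(t)^\alpha\,dt=f(x),$$
because $\int_0^x v(t)V(t)^\alpha\,dt=V(x)^{\alpha+1}/(\alpha+1)$. Thus $f\le Uh$, hence $Tf\le C\,T(Uh)$ a.e. by (ii), and since $\norm h_{1,\I}=(\alpha+1)\norm f_{1,v,\I}$ the second inequality yields $\norm{Tf}_{q,w,\I}\le C\,\norm f_{1,v,\I}$.

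For the necessity I would fix $h\in{\mathfrak M}^+(0,\infty)$, put $g:=Uh$, and take $f(x):=\sup_{y\ge x}g(y)$, the least non-increasing majorant of $g$; then $f\in{\mathfrak M}^{+,\downarrow}(0,\infty)$ (it is finite on $(0,\infty)$ since $g(x)\le V(x)^{-1}\norm h_{1,\I}$), $g\le f$, so $T(Uh)\le C\,Tf$ a.e. by (ii). The crux is the estimate $\norm f_{1,v,\I}\lesssim\norm h_{1,\I}$. Splitting $\int_0^y=\int_0^x+\int_x^y$ for $y\ge x$ and using $V(y)^{-\alpha-1}\le V(x)^{-\alpha-1}$ in the first part and $V(t)^\alpha V(y)^{-\alpha-1}\le V(t)^{-1}$ (since $V(t)\le V(y)$) in the second, one gets
$$g(y)\le g(x)+\int_x^y\frac{h(t)}{V(t)}\,dt\le g(x)+\int_x^\infty\frac{h(t)}{V(t)}\,dt,\qquad y\ge x,$$
hence $f(x)\le g(x)+\int_x^\infty h(t)V(t)^{-1}\,dt$. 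Two applications of Fubini's theorem then close the estimate:
$$\int_0^\infty g(x)v(x)\,dx=\int_0^\infty h(t)V(t)^\alpha\Bigl(\int_t^\infty V(x)^{-\alpha-1}v(x)\,dx\Bigr)dt\le\frac1\alpha\norm h_{1,\I},$$
and $\int_0^\infty\bigl(\int_x^\infty h(t)V(t)^{-1}\,dt\bigr)v(x)\,dx=\int_0^\infty h(t)V(t)^{-1}\bigl(\int_0^t v(x)\,dx\bigr)dt=\norm h_{1,\I}$, so $\norm f_{1,v,\I}\le(1+\alpha^{-1})\norm h_{1,\I}$. Applying the cone inequality to $f$ gives $\norm{T(Uh)}_{q,w,\I}\le C\,\norm{Tf}_{q,w,\I}\le C\,\norm f_{1,v,\I}\lesssim\norm h_{1,\I}$.

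The step I expect to need the most care is this $L^1_v$-bound for the non-increasing envelope of $Uh$: if in the splitting one replaced the term $\int_x^y hV^\alpha$ by the full Hardy average $\int_0^x h$, then $\int_0^\infty(\,\cdot\,)v$ would diverge logarithmically at the origin, so it is essential to keep the integral "restricted" to $(x,y)$, which is precisely what the second Fubini computation exploits. The remaining points are routine: the case $q=\infty$ is handled identically (norm monotonicity still applies), and the standing conventions dispose of degenerate values of $V$.
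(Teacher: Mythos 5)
Your proof is correct, and while the sufficiency half coincides with the paper's argument (the paper only writes out the dual statement, Theorem \ref{theorem2}, and declares this one analogous: test with $h=(\alpha+1)fv$, use $\int_0^x vV^\alpha=V(x)^{\alpha+1}/(\alpha+1)$ and monotonicity (ii)), your necessity half takes a genuinely different route. The paper's proof represents the outer factor ($V_*(x)^{-\alpha-1}$ in the dual setting) as an integral, pushes it inside to manufacture an explicit monotone majorant of the averaged function, and is consequently forced into a case distinction ($V_*(0)=\infty$ versus $V_*(0)<\infty$), invoking homogeneity (i) and quasi-subadditivity (iii) in the second case to absorb the constant term $\lambda\mathds{1}$. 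You instead take the least non-increasing majorant $f(x)=\sup_{y\ge x}Uh(y)$, establish the pointwise bound $f(x)\le Uh(x)+\int_x^\infty h(t)V(t)^{-1}\,dt$ by the splitting $\int_0^y=\int_0^x+\int_x^y$, and control $\norm{f}_{1,v,\I}$ by two Fubini computations, using $\int_t^\infty V^{-\alpha-1}v\le \alpha^{-1}V(t)^{-\alpha}$ and $\int_0^t v=V(t)$. This buys a uniform argument with no case split on whether $V(\infty)$ is finite, and it uses only property (ii) of $T$ in the necessity direction; the mild price is a less explicit majorant, which is harmless since a non-increasing function is automatically measurable and your bound $f(x)\le V(x)^{-1}\norm{h}_{1,\I}$ gives finiteness whenever the right-hand side is finite. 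Your treatment of degenerate values of $V$ is at the same (informal) level as the paper's, so I see no gap.
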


\begin{thm} \label{theorem2}
	Let $ 0 < q \le \infty $, $ 0 < \alpha < \infty$, $w,v \in \mathcal{W}(0,\infty)$, and $T$ be a monotone quasilinear operator on ${\mathfrak M}^+ (0,\infty)$. Then the inequality 
	$$
	\norm{Tf}_{q,w,(0,\infty)} \le C \, \norm{f}_{1,v,\I}, \quad f \in \mathfrak{M}^{+,\up} (0,\infty)
	$$
	holds if and only if the inequality
	$$
	\Bigg\| T \Bigg(\frac{1}{V_{*}(x)^{\alpha + 1}} \int_{x}^{\infty} h \, V_{*}^{\alpha} \Bigg) \Bigg\|_{q,w,(0,\infty)} \le C \, \norm{h}_{1,\I}, \quad h \in \mathfrak{M}^{+} (0,\infty) 
	$$
	holds, where
	$$
	V_*(x) : = \int_x^{\infty} v(t)\,dt, \quad x \in (0,\infty).
	$$
\end{thm}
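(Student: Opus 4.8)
The plan is to prove Theorem~\ref{theorem2} as the non-decreasing counterpart of Theorem~\ref{theorem1}, i.e.\ as the case $p=1$ of \cite[Theorem 3.4]{GogStep} with $T$ merely monotone quasilinear (cf.\ \cite[Theorem 2.5]{GogMusIHI}). Write
$$
Ph(x):=\frac{1}{V_*(x)^{\alpha+1}}\int_x^{\infty}h\,V_*^{\alpha},\qquad h\in{\mathfrak M}^+(0,\infty),\ x\in\I,
$$
so that the second inequality of the theorem reads $\|T(Ph)\|_{q,w,\I}\le C\,\|h\|_{1,\I}$. Two elementary observations are used throughout: since $(V_*^{\alpha+1})'=-(\alpha+1)v\,V_*^{\alpha}$ and $V_*(\infty)=0$, one has $\int_x^{\infty}v\,V_*^{\alpha}=\tfrac1{\alpha+1}V_*(x)^{\alpha+1}$; and $Ph(x)\,V_*(x)^{\alpha+1}=\int_x^{\infty}h\,V_*^{\alpha}$ is non-increasing in $x$. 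Only the monotonicity property~(ii) of $T$ enters the proof; (i) and (iii) are not needed here.

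\emph{``If'' part (the inequality over all $h$ implies the one over ${\mathfrak M}^{+,\uparrow}$).} Given $f\in{\mathfrak M}^{+,\uparrow}(0,\infty)$ with $\|f\|_{1,v,\I}<\infty$, put $h:=(\alpha+1)v\,f$. Since $f(t)\ge f(x)$ for $t\ge x$, the first observation gives
$$
Ph(x)=\frac{\alpha+1}{V_*(x)^{\alpha+1}}\int_x^{\infty}f\,v\,V_*^{\alpha}\ge\frac{(\alpha+1)f(x)}{V_*(x)^{\alpha+1}}\int_x^{\infty}v\,V_*^{\alpha}=f(x),\qquad x\in\I.
$$
By~(ii), $Tf\le C\,T(Ph)$ pointwise, and applying the assumed inequality to $h$ yields $\|Tf\|_{q,w,\I}\le C\,\|T(Ph)\|_{q,w,\I}\le C\,\|h\|_{1,\I}=C(\alpha+1)\|f\|_{1,v,\I}$.

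\emph{``Only if'' part.} Fix $h\in{\mathfrak M}^+(0,\infty)$ with $\|h\|_{1,\I}<\infty$, set $g:=Ph$, and let $\widetilde g(x):=\esup_{0<y\le x}g(y)$ be its least non-decreasing majorant, so $g\le\widetilde g\in{\mathfrak M}^{+,\uparrow}(0,\infty)$. By~(ii) and the hypothesis applied to $\widetilde g$,
$$
\|T(Ph)\|_{q,w,\I}=\|Tg\|_{q,w,\I}\le C\,\|T\widetilde g\|_{q,w,\I}\le C\,\|\widetilde g\|_{1,v,\I},
$$
so everything reduces to the estimate $\|\widetilde g\|_{1,v,\I}\le C\,\|h\|_{1,\I}$, which I expect to be the main obstacle: the least non-decreasing majorant of an arbitrary function has, in general, no comparable weighted $L^1$-norm, so the specific form $g=Ph$ must be used. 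I would argue as follows. First, Fubini's theorem together with $\int_0^{t}v\,V_*^{-(\alpha+1)}=\tfrac1\alpha\big(V_*(t)^{-\alpha}-V_*(0)^{-\alpha}\big)$ gives the exact identity
$$
\|h\|_{1,\I}=g(0^+)\,V_*(0)+\alpha\,\|g\|_{1,v,\I}.
$$
Second, by the layer-cake formula, with $a_\lambda:=\inf\{x:g(x)>\lambda\}$ one has $\{\widetilde g>\lambda\}=(a_\lambda,\infty)$ up to $v$-null sets, whence $\|\widetilde g\|_{1,v,\I}=\int_0^{\infty}V_*(a_\lambda)\,d\lambda=g(0^+)V_*(0)+\int_{g(0^+)}^{\infty}V_*(a_\lambda)\,d\lambda$. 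Third, for $\lambda>g(0^+)$ choose $x_0$ near $a_\lambda$ with $g(x_0)>\lambda$; the monotonicity of $g\,V_*^{\alpha+1}$ forces $g(y)>\lambda\,(V_*(x_0)/V_*(y))^{\alpha+1}$ for $y\le x_0$, so if $V_*(0)\ge 2^{1/(\alpha+1)}V_*(x_0)$ then $g>\lambda/2$ on an interval of $v$-measure $\gtrsim V_*(x_0)\gtrsim V_*(a_\lambda)$, giving $V_*(a_\lambda)\lesssim|\{x:g(x)>\lambda/2\}|_v$; otherwise $g>\lambda/2$ near $0$, forcing $g(0^+)\ge\lambda/2$, so this case only occurs for $\lambda\in(g(0^+),2g(0^+)]$ and contributes at most $g(0^+)V_*(0)$. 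Integrating in $\lambda$ and invoking the identity, $\|\widetilde g\|_{1,v,\I}\lesssim g(0^+)V_*(0)+\|g\|_{1,v,\I}\lesssim\|h\|_{1,\I}$, as required. The case $q=\infty$ is identical, since $q$ intervenes only through the hypothesis. Combining the two parts proves the equivalence.
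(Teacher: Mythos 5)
Your ``if'' direction is identical to the paper's sufficiency argument: both rest on the identity $1=\tfrac{\alpha+1}{V_*(x)^{\alpha+1}}\int_x^\infty V_*^\alpha v$, the pointwise bound $f\le P((\alpha+1)vf)$ for non-decreasing $f$, and property (ii). Your ``only if'' direction, however, is genuinely different and, as far as I can check, correct. The paper majorizes $Ph$ by the \emph{explicit} non-decreasing function $f(x)=(\alpha+1)\int_0^x \tfrac{v(t)}{V_*(t)^{\alpha+2}}\big(\int_t^\infty hV_*^\alpha\big)\,dt$, obtained from $V_*(x)^{-(\alpha+1)}=(\alpha+1)\int_0^x vV_*^{-(\alpha+2)}$ when $V_*(0)=\infty$, and computes $\|f\|_{1,v}\lesssim\|h\|_1$ by Fubini; when $V_*(0)<\infty$ it must add the constant correction $\lambda\mathds{1}$ with $\lambda=V_*(0)^{-(\alpha+1)}\int_0^\infty hV_*^\alpha$ and then invoke homogeneity (i) and quasi-subadditivity (iii) to split $\|T(f+\lambda\mathds{1})\|$. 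You instead take the \emph{least} non-decreasing majorant $\widetilde g$ of $g=Ph$ and prove $\|\widetilde g\|_{1,v}\lesssim\|h\|_1$ by a level-set argument exploiting the monotonicity of $gV_*^{\alpha+1}$ together with the exact identity $\|h\|_1=g(0^+)V_*(0)+\alpha\|g\|_{1,v}$ (which is the same Fubini computation the paper performs, repackaged). What your route buys: it avoids the case split on $V_*(0)$ (the boundary term $g(0^+)V_*(0)$ absorbs the role of $\lambda\mathds{1}$) and uses only property (ii) of $T$ throughout, so it would apply to operators that are merely monotone and homogeneous-free. What it costs: the geometric step --- choosing $x_0$ near $a_\lambda$ with $g(x_0)>\lambda$, deducing $g>\lambda/2$ on a set of $v$-measure $(2^{1/(\alpha+1)}-1)V_*(x_0)$, and handling the degenerate case $V_*(0)<2^{1/(\alpha+1)}V_*(x_0)$ via $\lambda\le 2g(0^+)$ --- is more delicate than the paper's two Fubini computations, and a clean write-up would need to justify that $a_\lambda$ can be taken as a genuine infimum (here $g$ is continuous where finite, so this is fine) and that $g\le\widetilde g$ a.e. Both proofs yield constants depending only on $\alpha$ and the constants in (i)--(iii), so the two approaches are interchangeable for the purposes of this paper.
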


Note that these statements were proved in \cite{GogStep} for $\alpha = 1$. Since \cite[Theorem 3.4]{GogStep} was presented without proof, for the convenience of the reader we give here the complete proof of Theorem \ref{theorem2}. 

\noindent {\bf Proof of Theorem \ref{theorem2}:} We begin with the sufficiency part. Assume that the inequality
		$$
		\Bigg\| T \Bigg(\frac{1}{V_{*}(x)^{\alpha + 1}} \int_{x}^{\infty} h \, V_{*}^{\alpha} \Bigg) \Bigg\|_{q,w,(0,\infty)} \le C \, \norm{h}_{1,\I} 
		$$
		holds for all $h \in \mathfrak{M}^{+} (0,\infty)$. 
		
		Since for any $x \in (0,\infty)$
		$$
		1 = \frac{\alpha +1}{V_{*}(x)^{\alpha + 1}} \int_{x}^{\infty} V_{*}^{\alpha} \, v,
		$$
		then the inequality
		$$
		f(x) \le \frac{\alpha +1}{V_{*}(x)^{\alpha + 1}} \int_{x}^{\infty} f \, V_{*}^{\alpha} \, v
		$$
		holds for any $f \in \mathfrak{M}^{+,\up} (0,\infty)$. 
		
		By using (ii), we get that 
		\begin{align*}
		\norm{Tf}_{q,w,\I} & \lesssim \Bigg\| T \, \Bigg(\frac{1}{V_{*}(x)^{\alpha + 1}} \int_{x}^{\infty} f \, V_{*}^{\alpha} \, v \Bigg) \Bigg\|_{q,w,\I} \le C \norm{f \, v}_{1,\I} = C \norm{f}_{1,v,\I}.
		\end{align*}
		
		To prove the necessity, assume that the inequality 
		$$
		\norm{Tf}_{q,w,(0,\infty)} \le C \, \norm{f}_{1,v,\I}
		$$
		holds for all $f \in \mathfrak{M}^{+,\up} (0,\infty)$. 
		
		First suppose that $V_{*}(0) = \infty$. 
		
		Since for any $x \in (0,\infty)$
		$$
		\frac{1}{V_{*}(x)^{\alpha + 1}} = (\alpha + 1) \int_{0}^{x} \frac{v}{V_{*}^{\alpha + 2}},
		$$
		then 
		\begin{align*}
		\frac{1}{V_{*}(x)^{\alpha + 1}} \int_{x}^{\infty} h \, V_{*}^{\alpha}  & = (\alpha + 1) \int_{0}^{x} \frac{v}{V_{*}^{\alpha + 2}} \int_{x}^{\infty} h \, V_{*}^{\alpha} \\
		& \le (\alpha + 1) \int_{0}^{x} \frac{v(t)}{V_{*}(t)^{\alpha + 2}} \bigg(\int_{t}^{\infty} h \, V_{*}^{\alpha}\bigg) \, dt.
		\end{align*}
		
		Set the function
		$$
		f(x) : = (\alpha + 1) \int_{0}^{x} \frac{v(t)}{V_{*}(t)^{\alpha + 2}} \bigg(\int_{t}^{\infty} h \, V_{*}^{\alpha}\bigg) \, dt, \quad x \in \I.
		$$
		Clearly $f \in \mathfrak{M}^{+,\up} (0,\infty)$. 
		
		By condition (ii) and Fubini theorem, we get that
		\begin{align*}
			\Bigg\| T \Bigg(\frac{1}{V_{*}(x)^{\alpha + 1}} \int_{x}^{\infty} h \, V_{*}^{\alpha} \Bigg) \Bigg\|_{q,w,\I}  & \lesssim \norm{Tf}_{q,w,\I} \\ 
			& \le C \, \norm{f}_{1,v,\I} \\
			& = (\alpha + 1) \, C \, \int_{0}^{\infty} \Bigg(\int_{0}^{x} \frac{v(t)}{V_{*}(t)^{\alpha + 2}} \Bigg( \int_{t}^{\infty} h \, V_{*}^{\alpha} \Bigg) \, dt \Bigg) v(x) \, dx \\
			& = (\alpha + 1) \, C \, \int_{0}^{\infty} \frac{v(t)}{V_{*}(t)^{\alpha + 2}} \Bigg(\int_{t}^{\infty} h \, V_{*}^{\alpha} \Bigg) V_{*}(t)\, dt \\
			& = (\alpha + 1) \, C \, \int_{0}^{\infty} \frac{v(t)}{V_{*}(t)^{\alpha + 1}} \Bigg(\int_{t}^{\infty} h \, V_{*}^{\alpha} \Bigg) \, dt \\
			& = (\alpha + 1) \, C \, \int_{0}^{\infty} h(x) \, V_{*}(x)^{\alpha} \Bigg( \int_{0}^{x} \frac{v}{V_{*}^{\alpha + 1}} \Bigg) \, dx \\
			& = \frac{(\alpha + 1)}{\alpha} \, C \, \int_{0}^{\infty} h(x) \, V_{*}(x)^{\alpha} \frac{1}{V_{*}(x)^{\alpha}} \, dx \\
			& = \frac{(\alpha + 1)}{\alpha} \, C \, \norm{h}_{1,\I}.
		\end{align*}
	
		Next suppose that $V_{*}(0) \in \I $. So we can write for any $x \in (0,\infty)$
		\begin{align*}
			\frac{1}{V_{*}(x)^{\alpha + 1}} \int_{x}^{\infty} h \, V_{*}^{\alpha} & = \Bigg[\frac{1}{V_{*}(x)^{\alpha + 1}} - \frac{1}{V_{*}(0)^{\alpha + 1}} \Bigg] \int_{x}^{\infty} h \, V_{*}^{\alpha}	+\frac{1}{V_{*}(0)^{\alpha + 1}} \int_{x}^{\infty} h \, V_{*}^{\alpha} \\
			& = (\alpha + 1)  \, \int_{0}^{x} \frac{v}{V_{*}^{\alpha + 2}} \int_{x}^{\infty} h \, V_{*}^{\alpha} +\frac{1}{V_{*}(0)^{\alpha + 1}} \int_{x}^{\infty} h \, V_{*}^{\alpha} \\
			& \le (\alpha + 1) \, \int_{0}^{x} \frac{v(t)}{V_{*}(t)^{\alpha + 2}} \Bigg(\int_{t}^{\infty} h \, V_{*}^{\alpha} \Bigg) \, dt + \frac{1}{V_{*}(0)^{\alpha + 1}} \int_{0}^{\infty} h \, V_{*}^{\alpha}  \\
			& = f(x) +\lambda \mathds{1}(x),
		\end{align*}
		where
		$$
		\lambda := \frac{1}{V_{*}(0)^{\alpha + 1}} \int_{0}^{\infty} h \, V_{*}^{\alpha}.
		$$
		
		By using condition (iii), we have that 
		$$
		\Bigg\| T\Bigg(\frac{1}{V_{*}(x)^{\alpha + 1}} \int_{x}^{\infty} h \, V_{*}^{\alpha} \Bigg) \Bigg\|_{q,w,\I}  \lesssim C \, \bigg(\norm{Tf}_{q,w,\I} + \norm{T(\lambda \mathds{1})}_{q,w,\I}\bigg).
		$$
		
		Consider the estimates of the right-hand side of the inequality, term by term. 
		
		The preceding discussion gives the estimation
		$$
		\norm{Tf}_{q,w,\I} \lesssim \norm{h}_{1,\I}.
		$$
		
		To estimate the remaining term, we make use of condition (i), obtaining
		\begin{align*}
			\norm{T\big(\lambda \mathds{1}\big)}_{q,w,\I} & = \lambda \, \norm{T\mathds{1}}_{q,w,\I} \\
			& \le \lambda \, C \, \norm{\mathds{1}}_{1,v,\I} \\
			& = C \, \frac{1}{V_{*}(0)^{\alpha + 1}} \Bigg( \int_{0}^{\infty} h \, V_{*}^{\alpha} \Bigg) \, V_{*}(0) \\
			& = C \, \frac{1}{V_{*}(0)^{\alpha}} \int_{0}^{\infty} h \, V_{*}^{\alpha} \\
			& \le C \, \int_{0}^{\infty} h \, V_{*}^{\alpha}  \frac{1}{V_{*}^{\alpha}} \\
			& = C \, \norm{h}_{1,\I}.
		\end{align*}
		
		Combining the estimates yields the result
		$$
		\Bigg\| T \Bigg(\frac{1}{V_{*}(x)^{\alpha + 1}} \int_{x}^{\infty} h \, V_{*}^{\alpha} \Bigg) \Bigg\|_{q,w,(0,\infty)} \le C \, \norm{h}_{1,\I} ,
		$$
		and the proof is complete. \qed
	

\section{Main results}\label{MR}

In this section, we prove our main results. Since the proofs of Theorems \ref{main.thm.1} and \ref{main.thm.2} are similar, we prove the latter.

\noindent {\bf Proof of Theorem \ref{main.thm.2}:} By Theorem \ref{RT.thm.main.4}, the inequality 
$$
\Bigg\| T \Bigg( \int_{x}^{\infty}h \Bigg) \Bigg\|_{q,w,(0,\infty)} \le C \, \norm{h}_{p,v,(0,\infty)} \, , \quad h \in \mathfrak{M}^{+} (0,\infty)
$$
holds if and only if
$$
\Big\| T \Big(\Psi^2 f \Big) \Big\|_{q,w,(0,\infty)} \le C \, \norm{f}_{p,\psi,(0,\infty)}, \quad f \in \mathfrak{M}^{+,\up} (0,\infty)
$$
holds.

Clearly, the latter holds iff the inequality 
\begin{equation}\label{eq.1.1}
\Bigg\| \Bigg( T \bigg( \Psi^2 f^{\frac{1}{p}} \bigg) \Bigg)^p \Bigg\|_{q/p,w,(0,\infty)} \le C^p \, \norm{f}_{1,\psi,(0,\infty)}, \quad f \in \mathfrak{M}^{+,\up} (0,\infty)
\end{equation}
is satisfied.

It is easy to see that the operator $T_1$ defined by
$$
T_1 f = \Bigg( T \bigg( \Psi^2 f^{\frac{1}{p}} \bigg) \Bigg)^p
$$
is monotone quasilnear operator on $\mathfrak{M}^{+} (0,\infty)$.

With this new notation inequality \eqref{eq.1.1} takes the form
\begin{equation}\label{eq.1.2}
\|T_1 f\|_{q / p,w,(0,\infty)} \le C^p \, \|f\|_{1,\psi,(0,\infty)}, \quad f \in \mathfrak{M}^{+,\up} (0,\infty).
\end{equation}

Applying Theorem \ref{theorem2} with $T_1$ and $\alpha = 2p-1 $, we see that inequality \eqref{eq.1.2} holds if and only if the inequality
\begin{equation}\label{eq.1.3}
\Bigg\| T_1 \Bigg( \frac{1}{\Psi(x)^{2p}} \int_{x}^{\infty}h \, \Psi^{2p-1} \Bigg) \Bigg\|_{q / p,w,(0,\infty)} \le C^p \, \|h\|_{1,(0,\infty)}, \quad f \in \mathfrak{M}^+ (0,\infty)
\end{equation}
is satisfied.

Since
\begin{align*}
T_1 \Bigg( \frac{1}{\Psi(x)^{2p}} \int_{x}^{\infty}h \, \Psi^{2p-1} \Bigg) = \Bigg( T \Bigg( \Psi(x)^2 \bigg(\frac{1}{\Psi(x)^{2p}} \int_{x}^{\infty}h \, \Psi^{2p-1} \bigg)^{\frac{1}{p}} \Bigg) \Bigg)^p = \Bigg( T\bigg( \int_{x}^{\infty} h \, \Psi^{2p-1} \bigg)^{\frac{1}{p}} \Bigg)^p, 
\end{align*}
equation \eqref{eq.1.3} can be rewritten as
$$
\Bigg\| \Bigg( T\bigg( \int_{x}^{\infty} h \, \Psi^{2p-1} \bigg)^{\frac{1}{p}} \Bigg)^p \Bigg\|_{q/p,w,(0,\infty)} \le C^p \, \norm{h}_{1,\I} \, , \quad h \in \mathfrak{M}^{+} (0,\infty),
$$
which is clearly equivalent to the inequality
$$
\Bigg\| \Bigg( T\bigg( \int_{x}^{\infty} h \bigg)^{\frac{1}{p}} \Bigg)^p \Bigg\|_{q/p,w,(0,\infty)} \le C^p \, \norm{h}_{1,\Psi^{1 - 2p},\I} \, , \quad h \in \mathfrak{M}^{+} (0,\infty).
$$

The proof is completed. \qed
	

\section{Applications}\label{appl}

In this  section, we illustrate an applications of Theorems \ref{main.thm.1} and \ref{main.thm.2} to the solutions of the weighted Hardy-type and the weighted iterated Hardy-type inequalities.

The usefulness of our main results can be seen even when $T = I$, where $I$ is the identity operator on ${\mathfrak M}^+ (0,\infty)$. Clearly, $T$ satisfies conditions {\rm (i) - (iii)}. Applying Theorem \ref{main.thm.1} and Theorem \ref{main.thm.2}, respectively, to the operator $T$, we get the following two equivalency statements.

\begin{thm}\label{main.thm.1.0.0}
	Let $0 < q < \infty$ and $1 < p < \infty$. Assume that $w \in \mathcal{W}(0,\infty)$ and $v \in \mathcal{W}(0,\infty)$ is such that \eqref{RT.thm.main.3.eq.0} holds. 
	Then the inequality 
	\begin{equation}\label{eq.Hardy}
	\Bigg( \int_0^{\infty} \bigg( \int_0^x h \Bigg)^q w(x) \,dx \Bigg)^{\frac{1}{q}} \le C \, \Bigg( \int_0^{\infty} h^p v \Bigg)^{\frac{1}{p}}, \quad h \in {\mathfrak M}^+(0,\infty),
	\end{equation}
	holds if and only if the inequality
	\begin{equation}\label{main.0.1.0}
	\Bigg( \int_0^{\infty} \Bigg( \int_0^x h \Bigg)^{\frac{q}{p}} w(x)\,dx \Bigg)^{\frac{p}{q}} \leq C^p \, \int_0^{\infty} h \Phi^{1-2p}, \qquad h \in {\mathfrak M}^+(0,\infty)
	\end{equation}
	holds.
\end{thm}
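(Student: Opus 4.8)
The plan is simply to invoke Theorem \ref{main.thm.1} with the choice $T = I$, the identity operator on $\mathfrak{M}^+(0,\infty)$, and then to unwind the notation. The first step is to check that $I$ is a monotone quasilinear operator: condition (i) is the identity $\lambda f = \lambda f$, condition (ii) holds with $C = 1$ since $f(x) \le g(x)$ gives $If(x) = f(x) \le g(x) = Ig(x)$, and condition (iii) holds with $C = 1$ since $I(f+g) = f + g = If + Ig$. Thus all hypotheses of Theorem \ref{main.thm.1} are met for this $T$, together with the assumptions $0 < q < \infty$, $1 < p < \infty$, $w \in \mathcal{W}(0,\infty)$, and \eqref{RT.thm.main.3.eq.0} on $v$.

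Next I would identify the two inequalities appearing in Theorem \ref{main.thm.1} in this special case. With $T = I$, inequality \eqref{eq.1} reads
$$
\Bigg\| \int_0^x h \Bigg\|_{q,w,(0,\infty)} \le C \, \norm{h}_{p,v,(0,\infty)}, \quad h \in \mathfrak{M}^+(0,\infty),
$$
which, once the weighted $L^q$ and $L^p$ norms are written out, is exactly \eqref{eq.Hardy}. For the equivalent inequality, one uses that $\big((\int_0^x h)^{1/p}\big)^p = \int_0^x h$ for every $x$, so the left-hand side of the equivalent inequality in Theorem \ref{main.thm.1} collapses to $\big\| \int_0^x h \big\|_{q/p, w, (0,\infty)}$, i.e. $\big( \int_0^\infty (\int_0^x h)^{q/p} w(x)\,dx \big)^{p/q}$. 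The right-hand side $C^p \norm{h}_{1, \Phi^{1-2p}, \I}$ is by definition $C^p \int_0^\infty h \, \Phi^{1-2p}$. Hence the equivalent inequality is precisely \eqref{main.0.1.0}.

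Combining these two identifications with the equivalence asserted in Theorem \ref{main.thm.1} yields that \eqref{eq.Hardy} holds if and only if \eqref{main.0.1.0} holds, which is the claim. I do not anticipate any genuine obstacle here: the entire content is already carried by Theorem \ref{main.thm.1}, and the only work is the bookkeeping verification that $I$ is monotone quasilinear and the elementary simplification $\big((\int_0^x h)^{1/p}\big)^p = \int_0^x h$ together with the unpacking of the norm notations. If anything requires a word of care, it is merely noting that the powers $q/p$ and the outer exponent $p/q$ in \eqref{main.0.1.0} are exactly what the functional $\|\cdot\|_{q/p,w,(0,\infty)}$ produces, so no hidden constant is lost in passing between the abstract and concrete forms.
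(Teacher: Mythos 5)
Your proposal is correct and follows exactly the paper's route: the paper likewise obtains Theorem~\ref{main.thm.1.0.0} by applying Theorem~\ref{main.thm.1} with $T=I$, noting that the identity operator satisfies conditions (i)--(iii), and unwinding the notation via $\big(\big(\int_0^x h\big)^{1/p}\big)^p=\int_0^x h$. Your write-up is, if anything, more explicit than the paper's one-line justification.
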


\begin{thm}\label{main.thm.2.0.0}
	Let $0 < q < \infty$ and $ 1 < p < \infty $. Assume that $w \in \mathcal{W}(0,\infty)$ and $v \in \mathcal{W}(0,\infty)$ is such that \eqref{RT.thm.main.4.eq.0} holds.
	Then the inequality 
	\begin{equation}\label{eq.Copson}
	\bigg( \int_0^{\infty} \bigg( \int_x^{\infty} h \bigg)^q w(x)\,dx\bigg)^{\frac{1}{q}} \le C \, \bigg( \int_0^{\infty} h^p v \bigg)^{\frac{1}{p}}, \quad  h \in {\mathfrak M}^+(0,\infty),
	\end{equation}
	holds if and only if the inequality
	\begin{equation}\label{main.0.2.0}
	\Bigg( \int_0^{\infty} \Bigg( \int_t^{\infty} h \Bigg)^{\frac{q}{p}} w(x)\,dx \Bigg)^{\frac{p}{q}} \leq C^p \, \int_0^{\infty} h \Psi^{1-2p}, \qquad h \in {\mathfrak M}^+(0,\infty)
	\end{equation}
	holds.
\end{thm}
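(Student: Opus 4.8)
The plan is to deduce Theorem~\ref{main.thm.2.0.0} directly from Theorem~\ref{main.thm.2} by specializing to the identity operator $T = I$ on $\mathfrak{M}^+(0,\infty)$, defined by $If = f$. First I would check that $I$ is a monotone quasilinear operator in the sense of {\rm (i)--(iii)}: condition~{\rm (i)} holds since $I(\lambda f) = \lambda f = \lambda I f$ for $\lambda \ge 0$; condition~{\rm (ii)} holds with constant $1$ because $f(x) \le g(x)$ a.e.\ gives $If(x) = f(x) \le g(x) = Ig(x)$ a.e.; and condition~{\rm (iii)} holds with constant $1$ since $I(f+g) = f+g = If + Ig$. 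Hence Theorem~\ref{main.thm.2} applies to $T = I$ under the stated hypotheses $0 < q < \infty$, $1 < p < \infty$, $w \in \mathcal{W}(0,\infty)$, and $v \in \mathcal{W}(0,\infty)$ satisfying \eqref{RT.thm.main.4.eq.0}.

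Next I would observe that inequality~\eqref{eq.2} with $T = I$ is precisely the Copson-type inequality~\eqref{eq.Copson}, so by Theorem~\ref{main.thm.2} it is equivalent to
$$
\Bigg\| \Bigg( I\bigg( \int_{x}^{\infty}h \bigg)^{\frac{1}{p}} \Bigg)^p \Bigg\|_{q/p,w,(0,\infty)} \le C^p \, \norm{h}_{1, \Psi^{1 - 2p},\I}, \qquad h \in \mathfrak{M}^+(0,\infty).
$$
Since $I$ is the identity, the function inside the quasinorm on the left collapses to $\big(\big(\int_x^{\infty} h\big)^{1/p}\big)^p = \int_x^{\infty} h$. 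Unwinding the definition of $\|\cdot\|_{q/p,w,(0,\infty)}$ on the left and of $\|\cdot\|_{1,\Psi^{1-2p},(0,\infty)}$ on the right (recalling $h \in \mathfrak{M}^+(0,\infty)$, so the absolute values may be dropped) then turns this inequality into exactly~\eqref{main.0.2.0}, which completes the argument.

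As for difficulties, there is essentially no analytic obstacle here, since all the content is already carried by Theorem~\ref{main.thm.2}; the only points needing a little care are purely bookkeeping -- correctly parsing the nested powers and the composition in the statement of Theorem~\ref{main.thm.2}, and noting that when $q < p$ the functional $\|\cdot\|_{q/p,w,(0,\infty)}$ is merely a quasinorm, which is harmless because Theorem~\ref{main.thm.2} is stated for the whole range $0 < q < \infty$. (An entirely analogous argument, invoking Theorem~\ref{main.thm.2.0.0.0} in place of Theorem~\ref{main.thm.2}, would yield the corresponding $q = \infty$ statement, though that is not needed here.)
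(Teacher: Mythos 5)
Your proposal is correct and is exactly the paper's own argument: the paper derives Theorem~\ref{main.thm.2.0.0} precisely by applying Theorem~\ref{main.thm.2} to the identity operator $T=I$, which trivially satisfies conditions (i)--(iii), so that the left-hand side collapses to $\big(\big(\int_x^{\infty}h\big)^{1/p}\big)^p=\int_x^{\infty}h$ and the two inequalities match up as you describe.
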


Recall the following characterizations of inequalities 
\begin{equation}
\bigg( \int_0^{\infty} \bigg( \int_x^{\infty} h \bigg)^q w(x)\,dx\bigg)^{{1} / {q}} \le C \, \int_0^{\infty} h v, \qquad h \in {\mathfrak M}^+(0,\infty)
\end{equation}
and 
\begin{equation}
\bigg( \int_0^{\infty} \bigg( \int_0^x h \bigg)^q w(x)\,dx\bigg)^{{1} / {q}} \le C \, \int_0^{\infty} h v, \qquad h \in {\mathfrak M}^+(0,\infty)
\end{equation}
(see, \cite[Theorem 1]{Bradley} for $1 \le q$ and \cite[Theorem 3.3]{ss} for $q < 1$).
\begin{thm}\label{thm.BradleySinnamon}
	Let $0 < q < \infty$ and $v,\,w \in {\mathcal W} (0,\infty)$.
	
	{\rm (i)} Let $1 \le q$. Then 
	$$
	\sup_{h \in \mp^+ (0,\infty)} \frac{\bigg( \int_0^{\infty} \bigg( \int_0^x h \bigg)^q w(x)\,dx\bigg)^{{1} / {q}}}{\int_0^{\infty} h v} = \sup_{x \in (0,\infty)} \Bigg( \int_x^{\infty} w \Bigg)^{{1} / {q}} \Bigg( \esup_{t \in (0,x]} v(t)^{-1} \Bigg).
	$$
	
	{\rm (ii)} Let $q < 1$. Then 
	$$
	\sup_{h \in \mp^+ (0,\infty)} \frac{\bigg( \int_0^{\infty} \bigg( \int_0^x h \bigg)^q w(x)\,dx\bigg)^{{1} / {q}}}{\int_0^{\infty} h v} \approx \Bigg( \int_0^{\infty} \Bigg( \int_x^{\infty} w \Bigg)^{q'} \, w(x) \, \Bigg( \esup_{t \in (0,x]} v(t)^{-1} \Bigg)^{q'} \,dx \Bigg)^{{1} / {q'}}.
	$$
\end{thm}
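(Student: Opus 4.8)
I would prove the two parts separately, since (i) is a genuine equality whereas (ii) is only an equivalence, but the lower bound for the best constant $C$ is common to both and comes from testing with approximate point masses. Fix $y\in\I$ and $M<\esup_{t\in(0,y)}v(t)^{-1}$, and pick a set $E\subset(0,y)$ of positive measure on which $v^{-1}\ge M$ (so $\int_E v\le M^{-1}|E|<\infty$); testing the inequality with $h=\mathds{1}_E/\int_E v$ gives $\int_0^\infty hv=1$ while $\int_0^x h\ge M$ for all $x\ge y$, whence $C\ge M\big(\int_y^\infty w\big)^{1/q}$. Letting $M\uparrow\esup_{(0,y)}v^{-1}$ and taking the supremum over $y\in\I$ yields
\[
C\ \ge\ \sup_{y\in\I}\Big(\int_y^\infty w\Big)^{1/q}\esup_{t\in(0,y]}v(t)^{-1}\ =:\ A ,
\]
where the last equality (identifying this expression with the one in the statement) is elementary and uses that $x\mapsto\int_x^\infty w$ is non-increasing and $x\mapsto\esup_{t\in(0,x]}v(t)^{-1}$ is non-decreasing; this is valid for every $q\in(0,\infty)$, and if $A=\infty$ there is nothing more to prove.

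For part (i), $1\le q$, it remains to show $C\le A$, and here $L^q(w,\I)$ is a Banach space. Writing $\int_0^x h=\int_0^\infty h(t)\,\mathds{1}_{(t,\infty)}(x)\,dt$ and applying Minkowski's integral inequality in $L^q(w,\I)$,
\[
\Big\|\int_0^{\cdot}h\Big\|_{q,w,\I}\ \le\ \int_0^\infty h(t)\,\big\|\mathds{1}_{(t,\infty)}\big\|_{q,w,\I}\,dt\ =\ \int_0^\infty h(t)\Big(\int_t^\infty w\Big)^{1/q}dt .
\]
One finishes with the pointwise bound $\big(\int_t^\infty w\big)^{1/q}\le A\,v(t)$ for a.e.\ $t$: since $v(t)^{-1}\le\esup_{s\in(0,t]}v(s)^{-1}$ for a.e.\ $t$ and, for $s\le t$, $\big(\int_t^\infty w\big)^{1/q}\le\big(\int_s^\infty w\big)^{1/q}$, one gets $v(t)^{-1}\big(\int_t^\infty w\big)^{1/q}\le\esup_{s\in(0,t]}\big[v(s)^{-1}\big(\int_s^\infty w\big)^{1/q}\big]\le A$. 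Hence $\|\int_0^\cdot h\|_{q,w,\I}\le A\int_0^\infty hv$, so $C\le A$ and (i) is an equality.

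For part (ii), $q<1$, the triangle inequality is unavailable. I would first reduce to a non-increasing right-hand weight: put $\sigma(x):=\esup_{t\in(0,x]}v(t)^{-1}$ (non-decreasing) and $\widetilde v:=1/\sigma$ (non-increasing). Since $\sigma v\ge1$, for $g:=\sigma hv$ one has $\int_0^x h\le\int_0^x g$ and $\|g\|_{1,\widetilde v,\I}=\|h\|_{1,v,\I}$, so the inequality with weight $\widetilde v$ implies the one with weight $v$; moreover $\esup_{t\in(0,x]}\widetilde v(t)^{-1}=\sigma(x)$, so the right-hand quantity of the theorem is unchanged. It therefore suffices to establish, for non-increasing $\widetilde v$, that $\|\int_0^\cdot g\|_{q,w,\I}\le C\,\|g\|_{1,\widetilde v,\I}$ holds with $C\lesssim B$, where $B=\big(\int_0^\infty(\int_x^\infty w)^{q'}w(x)\sigma(x)^{q'}dx\big)^{1/q'}$. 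For this I would discretise $(0,\infty)$ along the level sets of the non-increasing function $W(x):=\int_x^\infty w$, choosing $\{x_k\}$ with $\int_{x_k}^\infty w\approx 2^k$ (treating separately the degenerate cases $W(0^+)=\infty$, $W$ vanishing on a half-line, and atoms of $w$), use $q<1$ together with monotonicity to get $\int_0^\infty(\int_0^x g)^q w\approx\sum_k\big(\int_0^{x_k}g\big)^q\int_{x_k}^\infty w$, bound $\int_0^{x_k}g\le\widetilde v(x_k)^{-1}\sum_{j\le k}\big(\int_{x_{j-1}}^{x_j}g\,\widetilde v\big)$, and then invoke the discrete $\ell^1\!\to\!\ell^q$ Hardy inequality, whose sharp constant is the discrete analogue of $B$ (provable by the same one-line manipulation that settles the continuous case $v\equiv\mathds{1}$). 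Reassembling the blocks and comparing the resulting sum with $\int_0^\infty(\int_x^\infty w)^{q'}w\,\sigma^{q'}$ gives $C\lesssim B$. The matching lower bound $C\gtrsim B$ follows from the testing scheme of the first paragraph applied to a finite superposition of bumps $h_{x_k,\varepsilon}$ with coefficients tuned so that $\int hv$ telescopes while $\|\int_0^\cdot h\|_{q,w,\I}$ captures $B$ up to a constant depending only on $q$.

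The step I expect to be the main obstacle is the sufficiency half of (ii): making the discretisation bookkeeping honest in all the degenerate configurations, and, above all, controlling the loss of constants along the way---this loss is precisely why only ``$\approx$'', and not ``$=$'', can hold there---together with the passage between the discrete sums over $\{x_k\}$ and the continuous integral defining $B$. The rest (the reduction to a monotone weight, the Minkowski argument, and all the test-function estimates) is routine.
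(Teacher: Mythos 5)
First, a point of reference: the paper does not prove this theorem at all --- it is quoted as known, with part (i) attributed to \cite[Theorem 1]{Bradley} and part (ii) to \cite[Theorem 3.3]{ss} --- so the comparison is with those sources rather than with an in-paper argument. Your part (i) is correct and essentially complete: Minkowski's integral inequality applied to $\int_0^x h=\int_0^\infty h(t)\mathds{1}_{(t,\infty)}(x)\,dt$, together with the a.e.\ bound $v(t)^{-1}\big(\int_t^\infty w\big)^{1/q}\le A$ (which rests on the standard fact that $v(t)^{-1}\le\esup_{s\in(0,t]}v(s)^{-1}$ for a.e.\ $t$), gives $C\le A$, and your bump test functions give $C\ge A$ for every $q>0$; this is exactly Bradley's argument specialised to $p=1$, and it does yield the stated equality.

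Part (ii), however, is a plan rather than a proof, and the gap sits precisely where all the content of the $q<1$ case lives. The reduction to the non-increasing weight $\widetilde v=1/\sigma$ is fine, but after discretising along $W(x)=\int_x^\infty w$ you delegate the sufficiency to ``the discrete $\ell^1\to\ell^q$ Hardy inequality, whose sharp constant \dots\ is provable by the same one-line manipulation that settles the continuous case $v\equiv\mathds{1}$.'' That parenthetical is not correct: the one-line route (subadditivity of $t\mapsto t^q$ followed by H\"older with exponents $1/q$ and $1/(1-q)$) produces the condition $\sum_k U_k^{q'}u_k\big(\sum_{j\le k}v_j^{-q'}\big)$ rather than $\sum_k U_k^{q'}u_k\big(\sup_{j\le k}v_j^{-q'}\big)$, i.e.\ a strictly stronger hypothesis than the discrete analogue of $B$; the monotonicity of $\widetilde v$ does not repair this, since $\sum_{j\le k}v_j^{-q'}$ can still dwarf $v_k^{-q'}$. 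The sharp discrete inequality is true, but its proof is essentially the whole of \cite[Theorem 3.3]{ss} in discrete clothing, so invoking it here is circular in spirit. Likewise the necessity $C\gtrsim B$ via ``coefficients tuned so that $\int hv$ telescopes'' is the standard saturation argument, but it is exactly the construction that needs to be written out. If you want a self-contained sufficiency proof, a cleaner route than discretisation is to write $\big(\int_0^xg\big)^q=q\int_0^x\big(\int_0^tg\big)^{q-1}g(t)\,dt$, apply Fubini to bring in $W(t)$, and then run a single H\"older step with exponents $1/q$ and $1/(1-q)$ against the measure $g\widetilde v\,dt$, absorbing the resulting factor $\int_0^\infty\big(\int_0^tg\big)^{-1}g(t)W(t)^{1/(1-q)}\widetilde v(t)^{-q/(1-q)}dt$ into $B^{q'}$ by a second integration by parts; this avoids all the degenerate-case bookkeeping you flag as the main obstacle.
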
	

\begin{thm}\label{thm.BradleySinnamon_dual}
	Let $0 < q < \infty$ and $v,\,w \in {\mathcal W} (0,\infty)$.
	
	{\rm (i)} Let $1 \le q$. Then 
	$$
	\sup_{h \in \mp^+ (0,\infty)} \frac{\bigg( \int_0^{\infty} \bigg( \int_x^{\infty} h \bigg)^q w(x)\,dx\bigg)^{{1} / {q}}}{\int_0^{\infty} h v} = \sup_{x \in (0,\infty)} \Bigg( \int_0^x w \Bigg)^{{1} / {q}} \Bigg( \esup_{t \in [x,\infty)} v(t)^{-1} \Bigg).
	$$
	
	{\rm (ii)} Let $q < 1$. Then 
	$$
	\sup_{h \in \mp^+ (0,\infty)} \frac{\bigg( \int_0^{\infty} \bigg( \int_x^{\infty} h \bigg)^q w(x)\,dx\bigg)^{{1} / {q}}}{\int_0^{\infty} h v} \approx \Bigg( \int_0^{\infty} \Bigg( \int_0^x w \Bigg)^{q'} \, w(x) \, \Bigg( \esup_{t \in [x,\infty)} v(t)^{-1} \Bigg)^{q'} \,dx \Bigg)^{{1} / {q'}}.
	$$
\end{thm}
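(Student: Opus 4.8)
The quantity on the right-hand side is nothing but the norm of the Copson operator $h\mapsto\int_{\cdot}^{\infty}h$ as a map $L^1(v,\I)\to L^q(w,\I)$, and both parts are classical; the plan is to recover (i) by an elementary argument and (ii) by the Sinnamon--Stepanov technique. Throughout I write $W(x):=\int_0^x w$ and $U(x):=\esup_{t\in[x,\infty)}v(t)^{-1}$, so that $U$ is non-increasing and, as a standard measure-theoretic fact, $v(t)^{-1}\le U(t)$ for a.e.\ $t$. (One may also note that the substitution $x\mapsto 1/x$ converts the Copson version into the Hardy version preceding it, so that proving either suffices; I would treat the Copson one directly.) I would separate the two ranges from the start, since the extremal behaviour is genuinely different: for $q\ge1$ the extremisers are approximate point masses and the answer is a supremum, while for $q<1$ the extremisers are spread out and the answer is an integral carrying the exponent $q'=q/(1-q)$.

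\emph{Case $1\le q$.} For the upper bound I would write $\int_x^\infty h=\int_0^\infty h(t)\,\mathds{1}_{(0,t)}(x)\,dt$ and apply Minkowski's integral inequality in $L^q(w\,dx)$ (valid since $q\ge1$):
$$
\norm{\textstyle\int_x^\infty h}_{q,w,\I}\le\int_0^\infty h(t)\,\norm{\mathds{1}_{(0,t)}}_{q,w,\I}\,dt=\int_0^\infty h(t)\,W(t)^{1/q}\,dt.
$$
Denoting by $A$ the supremum displayed in (i), one has $W(t)^{1/q}U(t)\le A$ for every $t$, hence $W(t)^{1/q}\le A\,v(t)$ for a.e.\ $t$ (using $U(t)\ge v(t)^{-1}$ a.e.), so the last integral is $\le A\,\norm{h}_{1,v,\I}$; thus the best constant is $\le A$. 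For the reverse inequality I would test: fix $x_0\in\I$ and $\varepsilon>0$, and use the definition of the essential supremum to pick $E\subseteq[x_0,\infty)$ of positive finite measure with $v\le(U(x_0)-\varepsilon)^{-1}$ on $E$. Taking $h=\mathds{1}_E$ gives $\int_x^\infty h=|E|$ for all $x\in(0,x_0)$, whence $\norm{\int_x^\infty h}_{q,w,\I}\ge|E|\,W(x_0)^{1/q}$, while $\norm{h}_{1,v,\I}=\int_E v\le|E|(U(x_0)-\varepsilon)^{-1}$. Letting $\varepsilon\to0$ and then taking the supremum over $x_0$ shows the best constant is $\ge A$, so equality holds and (i) follows. (When $q=1$ this collapses to the one-line computation $\norm{\int_x^\infty h}_{1,w,\I}=\int_0^\infty h\,(\int_0^{\cdot}w)$ by Fubini.)

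\emph{Case $q<1$.} Here Minkowski fails and a single test function is far from optimal, so I would follow the reverse-H\"older/level-function scheme. For $G\ge0$ the reverse H\"older inequality in $L^q(w\,dx)$ (conjugate index $-q'$) gives $\norm{G}_{q,w,\I}=\inf\{\int_0^\infty Gg\,w:\ g>0,\ \int_0^\infty g^{-q'}w=1\}$. The key observation is that it suffices to exhibit a \emph{single} density $g$, independent of $h$: if $R(t):=\int_0^t g\,w$ satisfies $R(t)\le D\,v(t)$ for a.e.\ $t$, then Fubini gives $\int_0^\infty\!\big(\int_x^\infty h\big)g(x)w(x)\,dx=\int_0^\infty h(t)R(t)\,dt\le D\,\norm{h}_{1,v,\I}$ for all $h\ge0$, so the inequality holds with constant $D\,\big(\int_0^\infty g^{-q'}w\big)^{1/q'}$. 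One is thus reduced to minimising $\sup_t\big(R(t)/v(t)\big)\cdot\big(\int_0^\infty (R')^{-q'}w^{1+q'}\big)^{1/q'}$ over non-decreasing absolutely continuous $R$; the minimiser is produced from Halperin's level function of $v$, and a direct computation evaluates the minimum to be comparable to $\big(\int_0^\infty W^{q'}U^{q'}w\big)^{1/q'}$. The matching lower bound comes from testing against the functions adapted to that level function, or, equivalently, from a discretisation that reduces everything to a discrete Hardy inequality with $0<q<1$; either way this yields the ``$\approx$'' in (ii), which is exactly \cite[Theorem 3.3]{ss}.

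The routine part is Case $1\le q$: the Minkowski estimate, the testing, and the bookkeeping that the pointwise bound $W^{1/q}\le A\,v$ is available (via monotonicity of $W$ and $U$) are all straightforward. The real obstacle is Case $q<1$: the absence of Minkowski forces one through the level-function (or discretisation) machinery, and the delicate point is not that $\big(\int W^{q'}U^{q'}w\big)^{1/q'}$ \emph{bounds} the best constant but that it is \emph{comparable} to it --- producing the near-optimal $h$-independent density and controlling the reverse inequality is where the technical work of \cite{ss} lies.
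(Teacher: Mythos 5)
The paper does not actually prove this theorem: it is recalled from the literature, with part (i) attributed to Bradley and part (ii) to Sinnamon--Stepanov, so the comparison is with those sources rather than with an argument in the text. Your treatment of part (i) is complete and correct: writing $\int_x^\infty h=\int_0^\infty h(t)\,\mathds{1}_{(0,t)}(x)\,dt$, Minkowski's integral inequality gives $\norm{\int_x^\infty h}_{q,w,\I}\le\int_0^\infty h(t)\big(\int_0^t w\big)^{1/q}dt$, the a.e.\ bound $\big(\int_0^t w\big)^{1/q}\le A\,v(t)$ (via $U(t)\ge v(t)^{-1}$ a.e.) closes the upper estimate with the sharp constant, and the test functions $\mathds{1}_E$ with $E\subseteq[x_0,\infty)$ recover it, yielding the stated equality; the degenerate cases ($U(x_0)=\infty$ or $\int_0^{x_0}w=\infty$) need only the obvious modifications. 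For part (ii) your outline is the correct one --- reverse H\"older with exponent $-q'$, reduction to an $h$-independent density $g$ with $R=\int_0^{\cdot}gw\lesssim v$, and Halperin's level function to produce a near-optimal $g$ --- but, as you acknowledge, the construction of that level function and the matching lower bound are deferred to \cite{ss}; since the paper itself simply cites \cite[Theorem~3.3]{ss}, your proposal is at least as detailed as the source text. One point worth making explicit is why the level function is genuinely needed: the naive admissible choice $R(t)=\einf_{s\ge t}v(s)=1/U(t)$ can be locally constant, so $(R')^{-q'}$ blows up and the candidate density fails, which is exactly the obstruction your closing paragraph correctly identifies as the technical heart of the $q<1$ case.
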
	

Combining Theorems \ref{main.thm.1.0.0} and \ref{main.thm.2.0.0} with Theorems \ref{thm.BradleySinnamon} and \ref{thm.BradleySinnamon_dual}, respectively, we get (now classical) well-known characterizations of weights for which weighted Hardy inequality \eqref{eq.Hardy} and  weighted Copson inequality \eqref{eq.Copson} hold (cf. \cite{ok,kp,kufmalpers,kufperssam}).
\begin{thm}\label{thm.Hardy}
	Let $1 < p < \infty$, $0 < q < \infty$ and $v,\,w \in {\mathcal W} (0,\infty)$. 
	
	{\rm (a)} Let $p \le q$. Then 
	$$
	\sup_{h \in \mp^+} \frac{\bigg( \int_0^{\infty} \bigg( \int_0^x h \bigg)^q w(x)\,dx\bigg)^{\frac{1}{q}}}{\bigg( \int_0^{\infty} h^p v \bigg)^{\frac{1}{p}}} \approx \sup_{x \in (0,\infty)} \Bigg( \int_x^{\infty} w \Bigg)^{\frac{1}{q}} \Bigg( \int_0^x v^{1-p'} \Bigg)^{\frac{1}{p}}.
	$$
	
	{\rm (b)} Let $q < p$. Then 
	$$
	\sup_{h \in \mp^+} \frac{\bigg( \int_0^{\infty} \bigg( \int_0^x h \bigg)^q w(x)\,dx\bigg)^{\frac{1}{q}}}{\bigg( \int_0^{\infty} h^p v \bigg)^{\frac{1}{p}}} \approx \Bigg( \int_0^{\infty} \Bigg( \int_x^{\infty} w \Bigg)^{\frac{q}{p - q}} \, w(x) \, \Bigg( \int_0^x v^{1-p'} \Bigg)^{\frac{q(p-1)}{p-q}} \,dx \Bigg)^{\frac{p-q}{pq}}.
	$$
\end{thm}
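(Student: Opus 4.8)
The plan is to deduce Theorem~\ref{thm.Hardy} by combining the $T=I$ reduction contained in Theorem~\ref{main.thm.1.0.0} with the classical $L^1\to L^Q$ Hardy characterizations quoted in Theorem~\ref{thm.BradleySinnamon}, and then performing an elementary simplification of the weight and the exponents that occur. First I would apply Theorem~\ref{main.thm.1.0.0}: the weighted Hardy inequality \eqref{eq.Hardy} is equivalent to \eqref{main.0.1.0}, that is, to
\[
\Bigg(\int_0^\infty\Bigg(\int_0^x h\Bigg)^{q/p}w(x)\,dx\Bigg)^{p/q}\le C^p\int_0^\infty h\,\Phi^{1-2p},\qquad h\in\mp^+(0,\infty).
\]
Moreover, each equivalence entering Theorem~\ref{main.thm.1.0.0} (namely Theorems~\ref{RT.thm.main.3} and~\ref{theorem1}) changes the least admissible constant only by a factor depending on $p$, so the least constant $\mathcal{C}$ in \eqref{eq.Hardy} and the least constant $\mathcal{C}_1$ for which $\big(\int_0^\infty(\int_0^x h)^{q/p}w\big)^{p/q}\le\mathcal{C}_1\int_0^\infty h\,\Phi^{1-2p}$ holds for all $h$ satisfy $\mathcal{C}\approx\mathcal{C}_1^{1/p}$.

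Next I would rewrite the right-hand weight in a workable form. From $\Phi(x)=\big(\int_0^x v^{1-p'}\big)^{1/(p'+1)}$ and the identity $p'+1=(2p-1)/(p-1)$ one gets $(1-2p)/(p'+1)=1-p$, hence
\[
\Phi^{1-2p}=\Bigg(\int_0^x v^{1-p'}\Bigg)^{1-p}=:\widetilde v(x),
\]
which, by \eqref{RT.thm.main.3.eq.0} and the positivity of $v$, is a genuine non-increasing weight on $(0,\infty)$. Thus the inequality obtained above is precisely the $L^1(\widetilde v\,dx)\to L^{q/p}(w\,dx)$ boundedness of the Hardy operator $h\mapsto\int_0^x h$.

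Then I would invoke Theorem~\ref{thm.BradleySinnamon} with $q$ replaced by $q/p$, $v$ replaced by $\widetilde v$ and $w$ unchanged, splitting into two cases. If $p\le q$ then $q/p\ge 1$ and part~(i) gives $\mathcal{C}_1=\sup_{x}\big(\int_x^\infty w\big)^{p/q}\esup_{t\in(0,x]}\widetilde v(t)^{-1}$; if $q<p$ then $q/p<1$ and part~(ii) gives $\mathcal{C}_1\approx\big(\int_0^\infty(\int_x^\infty w)^{(q/p)'}w(x)\big(\esup_{t\in(0,x]}\widetilde v(t)^{-1}\big)^{(q/p)'}\,dx\big)^{1/(q/p)'}$. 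In both cases $x\mapsto\int_0^x v^{1-p'}$ is non-decreasing and $p-1>0$, so $\esup_{t\in(0,x]}\widetilde v(t)^{-1}=\big(\int_0^x v^{1-p'}\big)^{p-1}$; inserting this, using $(q/p)'=q/(p-q)$ in the second case, and taking the $1/p$-th power to pass from $\mathcal{C}_1$ to $\mathcal{C}$, one obtains $\mathcal{C}\approx\sup_x(\int_x^\infty w)^{1/q}(\int_0^x v^{1-p'})^{(p-1)/p}$ when $p\le q$ and the quantity in~(b) when $q<p$, which, after the elementary simplification $(p-1)/p=1/p'$, are the right-hand sides in~(a) and~(b).

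The main point is that there is no real analytic obstacle: the whole argument rests on the reduction theorems already established in Section~\ref{RT} and on the classical Bradley--Sinnamon characterizations. What does demand care is the bookkeeping of exponents and constants along the chain --- the outer power $p/q$, the final $1/p$-th root relating $\mathcal{C}$ to $\mathcal{C}_1$, the collapse $\Phi^{1-2p}=(\int_0^x v^{1-p'})^{1-p}$ (via $p'+1=(2p-1)/(p-1)$), and the identity $(q/p)'=q/(p-q)$ valid for $q<p$; an error in any one of these would corrupt the power of $\int_0^x v^{1-p'}$ or of $\int_x^\infty w$ in the final formulas.
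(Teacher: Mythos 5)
Your proposal is correct and is exactly the paper's route: the paper proves Theorem~\ref{thm.Hardy} in one line by combining Theorem~\ref{main.thm.1.0.0} with Theorem~\ref{thm.BradleySinnamon} applied with $q$ replaced by $q/p$ and $v$ replaced by $\Phi^{1-2p}=\big(\int_0^x v^{1-p'}\big)^{1-p}$, and your bookkeeping (the relation $\mathcal{C}\approx\mathcal{C}_1^{1/p}$, the identity $(1-2p)/(p'+1)=1-p$, and $(q/p)'=q/(p-q)$ for $q<p$) is accurate. One remark: your computation yields the exponent $(p-1)/p=1/p'$ on $\int_0^x v^{1-p'}$ in part~(a), which is the classical Bradley--Muckenhoupt condition, whereas the theorem as printed shows $1/p$; this is a typo in the paper's statement rather than a flaw in your argument.
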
	

\begin{thm}\label{thm.Copson}
	Let $1 < p < \infty$, $0 < q < \infty$ and $v,\,w \in {\mathcal W} (0,\infty)$. 
	
	{\rm (a)} Let $p \le q$. Then 
	$$
	\sup_{h \in \mp^+} \frac{\bigg( \int_0^{\infty} \bigg( \int_x^{\infty} h \bigg)^q w(x)\,dx\bigg)^{\frac{1}{q}}}{\bigg( \int_0^{\infty} h^p v \bigg)^{\frac{1}{p}}} \approx \sup_{x \in (0,\infty)} \Bigg( \int_0^x w \Bigg)^{\frac{1}{q}} \Bigg( \int_x^{\infty} v^{1-p'} \Bigg)^{\frac{1}{p}}.
	$$
	
	{\rm (b)} Let $q < p$. Then 
	$$
	\sup_{h \in \mp^+} \frac{\bigg( \int_0^{\infty} \bigg( \int_x^{\infty} h \bigg)^q w(x)\,dx\bigg)^{\frac{1}{q}}}{\bigg( \int_0^{\infty} h^p v \bigg)^{\frac{1}{p}}} \approx \Bigg( \int_0^{\infty} \Bigg( \int_0^x w \Bigg)^{\frac{q}{p - q}} \, w(x) \, \Bigg( \int_x^{\infty} v^{1-p'} \Bigg)^{\frac{q(p-1)}{p-q}} \,dx \Bigg)^{\frac{p-q}{pq}}.
	$$
\end{thm}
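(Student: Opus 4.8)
The plan is to derive Theorem~\ref{thm.Copson} from the equivalence theorem proved above, Theorem~\ref{main.thm.2.0.0}, by combining it with the classical two‑weight description of the Copson operator acting out of a weighted $L^1$ space, namely Theorem~\ref{thm.BradleySinnamon_dual}. The underlying point is that, by Theorem~\ref{main.thm.2.0.0}, the Copson inequality \eqref{eq.Copson} is equivalent to inequality \eqref{main.0.2.0}, which involves \emph{the same} operator $h\mapsto\int_x^\infty h$ but now maps a weighted $L^1$ space into $L^{q/p}(w,(0,\infty))$, and that mapping property is already completely characterised.

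Concretely, first I would apply Theorem~\ref{main.thm.2.0.0}: inequality \eqref{eq.Copson} holds with some finite $C$ if and only if
$$
\Bigg( \int_0^{\infty} \Bigg( \int_x^{\infty} h \Bigg)^{q/p} w(x)\,dx \Bigg)^{p/q} \le C^p \int_0^{\infty} h\, \Psi^{1-2p}, \qquad h\in{\mathfrak M}^+(0,\infty),
$$
and, tracking the constants through the chain of equivalences, the least admissible constants on the two sides are comparable with factors depending only on $p$ and $q$. Setting $Q:=q/p$, this last inequality with constant $C^p$ says exactly that the Copson operator is bounded from $L^1(\Psi^{1-2p},(0,\infty))$ into $L^{Q}(w,(0,\infty))$ with norm $\le C^p$. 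Since $w\in{\mathcal W}(0,\infty)$, and since $\Psi(x)=\big(\int_x^\infty v^{1-p'}\big)^{1/(p'+1)}\in(0,\infty)$ forces $\Psi^{1-2p}\in{\mathcal W}(0,\infty)$, Theorem~\ref{thm.BradleySinnamon_dual} applies verbatim with $q$ there replaced by $Q=q/p$ and $v$ there replaced by $\Psi^{1-2p}$: part (i) in the case $p\le q$ (where $Q\ge 1$) and part (ii) in the case $q<p$ (where $Q<1$).

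What remains is to unwind the two resulting expressions. The crucial simplification is a monotonicity observation: $\Psi$ is non-increasing and $1-2p<0$, so $\Psi^{1-2p}$ is non-decreasing, whence $\big(\Psi(t)^{1-2p}\big)^{-1}=\Psi(t)^{2p-1}$ is non-increasing and therefore
$$
\esup_{t\in[x,\infty)}\big(\Psi(t)^{1-2p}\big)^{-1}=\Psi(x)^{2p-1},\qquad x\in(0,\infty).
$$
Inserting this into Theorem~\ref{thm.BradleySinnamon_dual}(i) gives, for $p\le q$, that the least constant $C$ in \eqref{eq.Copson} satisfies $C^p\approx\sup_{x>0}\big(\int_0^x w\big)^{p/q}\Psi(x)^{2p-1}$; substituting the explicit form of $\Psi$ and using the identity $p'+1=(2p-1)/(p-1)$ to simplify the exponents then gives the supremum in part~(a). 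For $q<p$ one proceeds identically from Theorem~\ref{thm.BradleySinnamon_dual}(ii), with the conjugate index $q/(p-q)$ of $Q$ in place of $q'$; the same substitution turns $\Psi(x)^{2p-1}$ into $\big(\int_x^\infty v^{1-p'}\big)^{p-1}$ and, after raising to the power $1/p$, produces the integral expression in part~(b). I do not anticipate a genuine obstacle: the whole argument is of the ``reduce and quote'' type. The two things that require care are the verification that the equivalences in Theorems~\ref{main.thm.2.0.0} and~\ref{thm.BradleySinnamon_dual} transport the least constant up to harmless multiplicative factors — which is what the symbol $\approx$ in the conclusion absorbs — and the exponent bookkeeping together with the monotonicity step collapsing the inner essential supremum, which is routine but easy to slip on.
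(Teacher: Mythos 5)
Your argument is precisely the paper's own: the paper proves Theorem~\ref{thm.Copson} by the single sentence ``Combining Theorems~\ref{main.thm.1.0.0} and~\ref{main.thm.2.0.0} with Theorems~\ref{thm.BradleySinnamon} and~\ref{thm.BradleySinnamon_dual} \dots'', and your elaboration — reduce via Theorem~\ref{main.thm.2.0.0} to boundedness of the Copson operator from $L^1(\Psi^{1-2p})$ into $L^{q/p}(w)$, apply Theorem~\ref{thm.BradleySinnamon_dual} with $Q=q/p$, and collapse $\esup_{t\in[x,\infty)}\Psi(t)^{2p-1}$ to $\Psi(x)^{2p-1}=\big(\int_x^\infty v^{1-p'}\big)^{p-1}$ by monotonicity — is exactly that route, with the exponent arithmetic done correctly. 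One remark: after taking the $p$-th root your computation yields the exponent $(p-1)/p=1/p'$ on $\int_x^\infty v^{1-p'}$ in part~(a) (the classical Muckenhoupt--Bradley form, consistent with part~(b), where the same substitution gives $q(p-1)/(p-q)$ as printed), so the exponent $1/p$ appearing in the theorem's display is a typo in the statement rather than a flaw in your derivation.
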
	

In the remaining part of the paper, we sketch how to get solutions of weighted iterated Hardy-type inequalities from their counterparts with weighted $L^1$-norms on the right-hand sides. The investigation of such inequalities started with the study of the inequality
\begin{equation}\label{mainn0}
\Bigg( \int_0^{\infty} \Bigg(\int_0^x \Bigg( \int_t^{\infty} h \Bigg)^r u(t)\,dt \Bigg)^{\frac{q}{r}} w(x)\,dx\Bigg)^{\frac{1}{q}} \le C \Bigg(\int_0^{\infty} h^p v \Bigg)^{\frac{1}{p}}, \qquad h \in \mp^+(0,\infty). 
\end{equation}
Inequality \eqref{mainn0} have been considered in the case $r=1$ in \cite{gop2009} (see also \cite{g1}), where the result was
presented without proof, and in the case $p=1$ in \cite{gjop} and \cite{ss}, where the special
type of weight $v$ was considered. Recall that the inequality has been completely characterized	in \cite{GMP1} and \cite{GMP2} in the case $0 < r < \infty$, $0 < q \leq \infty$, $1 \le p < \infty$ by using discretization and anti-discretization methods; but, the obtained results were restricted to non-degenerate weights. Another approach to get the characterization of inequality \eqref{mainn0} was presented in \cite{ProkhStep1}. However, this characterization involves auxiliary functions, which make conditions more complicated. 

As it was mentioned in \cite{GogMusIHI} the characterization of "dual" inequality
\begin{equation}\label{main}
\Bigg( \int_0^{\infty} \Bigg( \int_x^{\infty} \Bigg( \int_0^t h \Bigg)^r u(t)\,dt
\Bigg)^{\frac{q}{r}} w(x)\,dx \Bigg)^{\frac{1}{q}}\leq C \, \Bigg( \int_0^{\infty} h^p v \Bigg)^{\frac{1}{p}}, \qquad h \in {\mathfrak M}^+(0,\infty)
\end{equation}
can be easily obtained from the solutions of inequality \eqref{mainn0}, which was presented in \cite{GKPS}. 

Another pair of "dual" weighted iterated Hardy-type inequalities are
\begin{equation}\label{iterH1}
\Bigg( \int_0^{\infty} \Bigg(\int_x^{\infty} \Bigg( \int_t^{\infty} h \Bigg)^r u(t)\,dt \Bigg)^{\frac{q}{r}} w(x)\,dx \Bigg)^{\frac{1}{q}} \le C \Bigg( \int_0^{\infty} h^p v \Bigg)^{\frac{1}{p}}, \qquad h \in \mp^+ (0,\infty)
\end{equation}
and 
\begin{equation}\label{iterH3}
\Bigg( \int_0^{\infty} \Bigg( \int_0^x \Bigg( \int_0^t h \Bigg)^r u(t)\,dt \Bigg)^{\frac{q}{r}} w(x)\,dx \Bigg)^{\frac{1}{q}} \le C \Bigg( \int_0^{\infty} h^p v \Bigg)^{\frac{1}{p}}, \qquad h \in \mp^+(0,\infty).
\end{equation}

The weighted Hardy-type inequalities on the cones of non-increasing or non-decreasing functions can be used to characterize all four inequalities, when $1 < p < \infty$. This approach provides solution of iterated inequalities by so-called "flipped" conditions (see, \cite{GogMusIHI} and \cite{gog.mus.2017_2}). In the case when $p = 1$, \cite{GogMusIHI} contains solutions of inequalities \eqref{mainn0} - \eqref{iterH3} with weight functions $\int_0^x v$ and $\big( \int_x^{\infty} v\big)^{-1}$ on the right-hand side, as well.

Different approach to solve \eqref{iterH1} has been given in \cite{mus.2017_cor} when $p=1$ using a combination of reduction techniques and discretization. The "classical" conditions ensuring the validity of \eqref{iterH1} was recently presented in \cite{krepick}. Inequalities \eqref{main} and \eqref{iterH3} were recently characterized by using discretization techniques in \cite{GMPTU} and \cite{GU}, respectively. The characterization of inequality \eqref{main} by using a combination of reduction techniques and discretization was lately presented in \cite{mus.yil.2022}.

It is worth noting that the characterizations of inequalities \eqref{mainn0} - \eqref{iterH3} are important because many inequalities
for classical operators  can be reduced to them. These inequalities play an important role in the theory of weighted Morrey-type
spaces and Ces\`{a}ro function spaces (see \cite{GKPS}, \cite{gmu_CMJ}, \cite{gmu_2017} and \cite{gogmusunv}). Note that using characterizations of weighted Hardy inequalities it is easy to obtain the characterization of the boundedness of bilinear Hardy-type inequalities (see, for instance, \cite{AOR}, \cite{Krep} and \cite{BMU}).

Consider the operator $T : \mathfrak{M}^+ (0,\infty) \rightarrow \mathfrak{M}^+ (0,\infty)$, defined for some $0 < r < \infty$, by
$$
T f (x) = \Bigg( \int_x^{\infty} f(t)^r u(t)\,dt \Bigg)^{\frac{1}{r}}, \quad f \in \mathfrak{M}^+ (0,\infty), \quad x \in (0,\infty).
$$	
Obvıously, $T$ satisfies conditions {\rm (i) - (iii)}. 

Applying Theorem \ref{main.thm.1} and Theorem \ref{main.thm.2}, respectively, to the operator $T$, we get the following two equivalency statements.

\begin{thm}\label{main.thm.1.0}
	Let $0 < q,\, r < \infty$ and $1 < p < \infty$. Assume that $u,\,w \in \mathcal{W}(0,\infty)$ and $v \in \mathcal{W}(0,\infty)$ is such that \eqref{RT.thm.main.3.eq.0} holds. 
	Then inequality \eqref{main} holds if and only if the inequality
	\begin{equation}\label{main.0.1}
	\Bigg( \int_0^{\infty} \Bigg( \int_x^{\infty} \Bigg( \int_0^t h \Bigg)^{\frac{r}{p}} u(t)\,dt
	\Bigg)^{\frac{q}{r}} w(x)\,dx \Bigg)^{\frac{p}{q}} \leq C^p \, \int_0^{\infty} h \Phi^{1-2p}, \qquad h \in {\mathfrak M}^+(0,\infty)
	\end{equation}
	holds.
\end{thm}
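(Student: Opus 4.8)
The plan is to obtain Theorem~\ref{main.thm.1.0} as a direct specialization of Theorem~\ref{main.thm.1} to the Copson-type operator
\[
Tf(x) = \Bigg( \int_x^{\infty} f(t)^r u(t)\,dt \Bigg)^{\frac{1}{r}}, \qquad f \in {\mathfrak M}^+(0,\infty),\ x\in(0,\infty),
\]
followed by an elementary rewriting of the two (quasi-)norms that occur. First I would check that this $T$ is a monotone quasilinear operator on ${\mathfrak M}^+(0,\infty)$: condition {\rm (i)} holds because $\int_x^\infty(\la f)^r u = \la^r\int_x^\infty f^r u$; condition {\rm (ii)} holds because $f\le g$ a.e.\ forces $\int_x^\infty f^r u\le\int_x^\infty g^r u$ for every $x$; and condition {\rm (iii)} follows from Minkowski's integral inequality (with constant $1$) when $r\ge1$, and from the elementary bounds $(a+b)^r\le a^r+b^r$ and $(a+b)^{1/r}\le 2^{1/r-1}(a^{1/r}+b^{1/r})$ when $0<r<1$, the latter producing a constant depending only on $r$. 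Since $u\in\mathcal W(0,\infty)$ and $0<r<\infty$, the remaining hypotheses on $p$, $q$, $v$, $w$ coincide with those of Theorem~\ref{main.thm.1}.

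Next I would identify inequality~\eqref{eq.1} for this $T$ with inequality~\eqref{main}. Applying $T$ to the function $x\mapsto\int_0^x h$ gives
\[
T\Bigg(\int_0^{x} h\Bigg) = \Bigg(\int_x^{\infty}\Bigg(\int_0^t h\Bigg)^r u(t)\,dt\Bigg)^{\frac{1}{r}},
\]
so $\big\|T(\int_0^x h)\big\|_{q,w,(0,\infty)}$ is precisely the left-hand side of~\eqref{main}, while $\|h\|_{p,v,(0,\infty)}$ is its right-hand side. Hence, for this particular $T$, \eqref{eq.1} and~\eqref{main} are literally the same inequality, and it remains only to show that the equivalent inequality supplied by Theorem~\ref{main.thm.1} reduces to~\eqref{main.0.1}.

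To see the latter, I would use $\big((\int_0^t h)^{1/p}\big)^r=(\int_0^t h)^{r/p}$; applying $T$ to $x\mapsto(\int_0^x h)^{1/p}$ and raising the result to the power $p$ yields
\[
\Bigg( T\Bigg( \int_0^{x} h \Bigg)^{\frac{1}{p}} \Bigg)^{p} = \Bigg(\int_x^{\infty}\Bigg(\int_0^t h\Bigg)^{\frac{r}{p}} u(t)\,dt\Bigg)^{\frac{p}{r}}.
\]
Taking the $\|\cdot\|_{q/p,w,(0,\infty)}$-quasinorm then turns the left-hand side of the equivalent inequality of Theorem~\ref{main.thm.1} into the left-hand side of~\eqref{main.0.1}, while its right-hand side $C^p\|h\|_{1,\Phi^{1-2p},(0,\infty)}$ equals $C^p\int_0^\infty h\,\Phi^{1-2p}$, which is exactly the right-hand side of~\eqref{main.0.1}. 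The equivalence asserted in Theorem~\ref{main.thm.1} then yields the claim.

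I do not expect any genuine obstacle here: the whole content is carried by Theorem~\ref{main.thm.1}, and only two minor points require attention — the verification of the quasi-triangle inequality {\rm (iii)} in the range $0<r<1$ (where the constant is $2^{1/r-1}$ rather than $1$), and keeping track of the exponents $r/p$, $p/r$ and $q/p$ when composing the nested integrals with the outer quasinorms.
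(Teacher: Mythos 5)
Your proposal is correct and follows exactly the paper's route: the paper also obtains Theorem~\ref{main.thm.1.0} by noting that $Tf(x)=\big(\int_x^{\infty}f^r u\big)^{1/r}$ satisfies conditions (i)--(iii) and then applying Theorem~\ref{main.thm.1}, with the same rewriting of the quasinorms. Your explicit verification of the quasi-triangle inequality for $0<r<1$ just fills in what the paper dismisses as obvious.
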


\begin{thm}\label{main.thm.2.0}
	Let $0 < q,\, r < \infty$ and $ 1 < p < \infty $. Assume that $u,\,w \in \mathcal{W}(0,\infty)$ and $v \in \mathcal{W}(0,\infty)$ is such that \eqref{RT.thm.main.4.eq.0} holds.
	Then inequality \eqref{iterH1} holds if and only if the inequality
	\begin{equation}\label{main.0.2}
	\Bigg( \int_0^{\infty} \Bigg( \int_x^{\infty} \Bigg( \int_t^{\infty} h \Bigg)^{\frac{r}{p}} u(t)\,dt
	\Bigg)^{\frac{q}{r}} w(x)\,dx \Bigg)^{\frac{p}{q}} \leq C^p \, \int_0^{\infty} h \Psi^{1-2p}, \qquad h \in {\mathfrak M}^+(0,\infty)
	\end{equation}
	holds.
\end{thm}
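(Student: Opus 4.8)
The plan is to obtain Theorem~\ref{main.thm.2.0} as a direct specialization of Theorem~\ref{main.thm.2} to the Copson-type operator $Tf(x)=\big(\int_x^{\infty}f(t)^r u(t)\,dt\big)^{1/r}$. The first step is to confirm that this $T$ is a monotone quasilinear operator on $\mathfrak{M}^+(0,\infty)$, as asserted in the paragraph preceding the theorem: homogeneity~(i) follows by factoring $\lambda^r$ out of the integral, monotonicity~(ii) is immediate, and the quasi-triangle inequality~(iii) follows from the elementary bounds $(a+b)^r\le\max\{1,2^{r-1}\}(a^r+b^r)$ and $(a+b)^{1/r}\le\max\{1,2^{1/r-1}\}(a^{1/r}+b^{1/r})$, which together produce a constant depending only on $r$.

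The second step is to identify the two sides of the equivalence with the two displayed inequalities. For this choice of $T$, inequality~\eqref{eq.2} reads
$$
\Bigg(\int_0^{\infty}\Bigg(\int_x^{\infty}\Bigg(\int_t^{\infty}h\Bigg)^r u(t)\,dt\Bigg)^{q/r}w(x)\,dx\Bigg)^{1/q}\le C\Bigg(\int_0^{\infty}h^p v\Bigg)^{1/p},
$$
which is precisely \eqref{iterH1}. Hence Theorem~\ref{main.thm.2} applies verbatim (its hypotheses, in particular \eqref{RT.thm.main.4.eq.0}, are exactly those assumed here, so $\Psi$ is well defined and finite and the weight $\Psi^{1-2p}$ is meaningful), and it tells us that \eqref{iterH1} holds if and only if
$$
\Bigg\|\bigg(T\Big(\big(\int_x^{\infty}h\big)^{1/p}\Big)\bigg)^p\Bigg\|_{q/p,w,(0,\infty)}\le C^p\,\|h\|_{1,\Psi^{1-2p},(0,\infty)},\qquad h\in\mathfrak{M}^+(0,\infty).
$$

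The third and final step is a short exponent computation. Applying $T$ to the function $t\mapsto\big(\int_t^{\infty}h\big)^{1/p}$ gives $\big(\int_x^{\infty}(\int_t^{\infty}h)^{r/p}u(t)\,dt\big)^{1/r}$, so its $p$-th power is $\big(\int_x^{\infty}(\int_t^{\infty}h)^{r/p}u(t)\,dt\big)^{p/r}$; since $(p/r)(q/p)=q/r$, the $\|\cdot\|_{q/p,w,(0,\infty)}$-quasinorm of this function equals $\big(\int_0^{\infty}(\int_x^{\infty}(\int_t^{\infty}h)^{r/p}u(t)\,dt)^{q/r}w(x)\,dx\big)^{p/q}$, while $\|h\|_{1,\Psi^{1-2p},(0,\infty)}=\int_0^{\infty}h\,\Psi^{1-2p}$. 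Substituting these into the displayed equivalent inequality reproduces \eqref{main.0.2} exactly, which finishes the proof.

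I expect no genuine obstacle here: all the analytic content is carried by Theorem~\ref{main.thm.2}, and what remains is unwinding the definition of $T$ and tracking powers. The only points worth stating explicitly are the verification of (i)--(iii) for the chosen $T$ and the remark that \eqref{RT.thm.main.4.eq.0} is what guarantees finiteness of $\Psi$; the companion Theorem~\ref{main.thm.1.0} is obtained in the same way by instead invoking Theorem~\ref{main.thm.1} with this same operator $T$, replacing $\int_t^{\infty}h$ by $\int_0^t h$ and $\Psi$ by $\Phi$ throughout.
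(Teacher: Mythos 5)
Your proposal is correct and follows exactly the paper's route: the paper obtains Theorem \ref{main.thm.2.0} by applying Theorem \ref{main.thm.2} to the operator $Tf(x)=\big(\int_x^{\infty}f(t)^r u(t)\,dt\big)^{1/r}$, noting that it satisfies (i)--(iii) and that \eqref{eq.2} becomes \eqref{iterH1}, precisely as you do. Your added verification of quasilinearity and the exponent bookkeeping simply fill in details the paper leaves as ``obvious.''
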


Recall the following statement on characterization of the inequality	
\begin{equation}\label{main.1}
\Bigg( \int_0^{\infty} \Bigg( \int_x^{\infty} \Bigg( \int_0^t h \Bigg)^r u(t)\,dt
\Bigg)^{\frac{q}{r}} w(x)\,dx \Bigg)^{\frac{1}{q}}\leq C \, \int_0^{\infty} h v, \qquad h \in {\mathfrak M}^+(0,\infty)
\end{equation}
(see, for instance, \cite{GU} and \cite{mus.yil.2022}).
	
\begin{thm}\label{mainthm1}
	Let $0 < q,\, r < \infty$ and  $u,\,v,\,w \in {\mathcal W}\I$.
	
	{\rm (a)} Let $1 \le \min\{q,\,r\}$. Then inequality \eqref{main.1} holds if and only if $F_1^1 < \infty$ and $F_2^1 < \infty$, where
	\begin{equation*}
		F_1^1 := \sup_{x \in (0,\infty)} \Bigg( \int_0^x u \Bigg)^{\frac{1}{q}} \Bigg( \int_x^{\infty} w \Bigg)^{\frac{1}{r}} \Bigg( \esup_{y \in (0,x]} v(y)^{-1} \Bigg),
	\end{equation*}
	and
	\begin{equation*}
		F_2^1 := \sup_{x \in (0,\infty)} \Bigg( \int_x^{\infty} \Bigg( \int_t^{\infty} w \Bigg)^{\frac{q}{r}} u(t)\,dt \Bigg)^{\frac{1}{q}} \Bigg( \esup_{y \in (0,x]} v(y)^{-1} \Bigg).
	\end{equation*}
	
	Moreover, if $C$ is the best constant in \eqref{main.1}, then $C \approx F_1^1 + F_2^1$.
	
	{\rm (b)} Let $q < 1 \le r$. Then inequality \eqref{main.1} holds if and only if $F_3^1 < \infty$ and $F_4^1 < \infty$, where
	\begin{equation*}
		F_3^1 := \Bigg( \int_0^{\infty} \Bigg( \sup_{t \in [x,\infty)} \Bigg( \int_t^{\infty} w \Bigg)^{\frac{1}{r}} \Bigg( \esup_{y \in (0,t]} v(y)^{-1} \Bigg) \Bigg)^{q'} \Bigg( \int_0^x u \Bigg)^{q'} u(x) \,dx \Bigg)^{\frac{1}{q'}},
	\end{equation*}
	and
	\begin{equation*}
		F_4^1 := \Bigg(\int_0^{\infty} \Bigg(\int_{x}^{\infty} \Bigg( \int_y^{\infty} w \Bigg)^{\frac{q}{r}} u(y) \, dy \Bigg)^{q'} \Bigg( \int_x^{\infty} w	\Bigg)^{\frac{q}{r}} \Bigg( \esup_{y \in (0,x]} v(y)^{-1} \Bigg)^{q'} \, u(x)\,dx \Bigg)^{\frac{1}{q'}}.
	\end{equation*}
	
	Moreover, if $C$ is the best constant in \eqref{main.1}, then $C \approx F_3^1 + F_4^1$.
	
	{\rm (c)} Let $r < 1 \le q$. Then inequality \eqref{main.1} holds if and only if $F_2 < \infty$ and $F_5 < \infty$, where
	\begin{equation*}
		F_5^1 : = \sup_{t \in (0,\infty)} \Bigg( \int_0^t u \Bigg)^{\frac{1}{q}} \Bigg( \int_t^{\infty} \Bigg( \int_x^{\infty} w \Bigg)^{r'} w(x) \Bigg( \esup_{y \in (0,x]} v(y)^{-1} \Bigg)^{r'} \,dx \Bigg)^{\frac{1}{r'}}.
	\end{equation*}
	
	Moreover, if $C$ is the best constant in \eqref{main.1}, then $C \approx F_2^1 + F_5^1$.	
	
	{\rm (d)} Let $\max\{q,\,r\} < 1$. Then inequality \eqref{main.1} holds if and only if $F_4 < \infty$ and $F_6 < \infty$, where
	\begin{align*}
		F_6^1 & := \Bigg( \int_0^{\infty} \Bigg( \int_0^t u \Bigg)^{q'} u(t) \Bigg( \int_t^{\infty} \Bigg( \int_x^{\infty} w \Bigg)^{r'} w(x) \Bigg( \esup_{y \in (0,x]} v(y)^{-1} \Bigg)^{r'} \,dx \Bigg)^{\frac{q'}{r'}}\,dt \Bigg)^{\frac{1}{q'}}.	
	\end{align*}
	
	Moreover, if $C$ is the best constant in \eqref{main.1}, then $C \approx F_4^1 + F_6^1$.	
\end{thm}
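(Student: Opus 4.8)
The plan is to prove Theorem~\ref{mainthm1} by the test-function-and-discretization scheme used for iterated Hardy-type inequalities in \cite{GU} and \cite{mus.yil.2022}. Rewrite \eqref{main.1} as $\norm{T(Hh)}_{q,w,\I}\le C\,\norm{h}_{1,v,\I}$, where $Hh(t)=\int_0^t h$ is the Hardy operator and $Tf(x)=\big(\int_x^\infty f^r u\big)^{1/r}$ is the operator introduced above; then $T\circ H$ is again a monotone quasilinear operator on $\mp^+\I$. The structural observation that drives the argument is that $H$ carries $h$ to the non-decreasing function $Hh$, and that the decisive test functions are the point masses: if $h$ is an approximate unit point mass at $a$, then $Hh\to\mathds{1}_{[a,\infty)}$ and $T(\mathds{1}_{[a,\infty)})(x)=\big(\int_{\max(x,a)}^\infty u\big)^{1/r}$, whose $L^q(w)$-norm splits into a ``block'' part (from $x\in(0,a]$) and a ``tail'' part (from $x\in(a,\infty)$).

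For \emph{necessity}, testing \eqref{main.1} with $h=\lambda\mathds{1}_{(a-\varepsilon,a)}/\varepsilon$ and letting $\varepsilon\to0^+$ gives $\norm{T(\mathds{1}_{[a,\infty)})}_{q,w,\I}\le C\,v(a)$ for a.e.\ $a$; placing the mass at a point $\tilde a\in(0,a]$ instead can only enlarge the left-hand side, since then $Hh\ge\mathds{1}_{[a,\infty)}$, so $v(a)$ may be replaced by $\einf_{(0,a]}v=\big(\esup_{(0,a]}v^{-1}\big)^{-1}$. When $\min\{q,r\}\ge1$ the block and tail contributions become conditions of the form $F_1^1\lesssim C$ and $F_2^1\lesssim C$. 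When $q<1$ (or $r<1$) a single point mass is no longer enough: one superposes point masses over a geometric sequence and applies the reverse forms of the Minkowski and Hölder inequalities (valid for exponents below~$1$) on the outer $L^q(w)$- (resp.\ inner $L^r(u)$-) layer, which turns the relevant $\sup$-condition into an integral-type one; this is the origin of $F_3^1,\ldots,F_6^1$, and which combination of the $F_i^1$ enters in each of the cases {\rm (b)--(d)} is dictated by which of $q,r$ lie below~$1$.

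For \emph{sufficiency}, fix a discretizing sequence $\{a_k\}$ adapted to $v$ (so that $\esup_{(0,a_k]}v^{-1}$ grows geometrically, hence $\einf_{(0,a_k]}v\approx\einf_{(a_{k-1},a_k]}v$) and to the double integral on the left, write $h=\sum_k h\mathds{1}_{[a_k,a_{k+1})}$, and use $Hh\le\sum_k\big(\int_{a_k}^{a_{k+1}}h\big)\mathds{1}_{[a_k,\infty)}$ together with the monotonicity and (quasi-)subadditivity of $T$. Each $\norm{T(\mathds{1}_{[a_k,\infty)})}_{q,w,\I}$ is estimated by the direct Minkowski and Hölder inequalities where the relevant exponent is $\ge1$ and by their reverse forms where it is $<1$; summing over $k$ with the matching $\ell^q$- and $\ell^r$-(quasi-)triangle inequalities, invoking the defining property of $\{a_k\}$, and applying the standard anti-discretization lemmas that convert block-sums back into continuous integrals, one bounds the left-hand side of \eqref{main.1} by $(F_1^1+F_2^1)\norm{h}_{1,v,\I}$ in case {\rm (a)} and by the analogous combination of the $F_i^1$ in the remaining cases.

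The hard part will be the anti-discretization---proving that the discrete conditions generated in the sufficiency estimates are comparable to the continuous $F_i^1$---and, above all, the mixed cases $q<1\le r$ and $r<1\le q$, where one layer is controlled by a direct inequality and the other by its reverse, so that two a priori different discretizing sequences (one adapted to $v$, one to the double integral in $u$ and $w$) must be reconciled and the correct two quantities $F_i^1$ isolated simultaneously. A preliminary matter is the degenerate case $\esup_{(0,x]}v^{-1}=\infty$: concentrating the test mass where $v$ is small then forces $u$ or $w$ to vanish on an appropriate set if \eqref{main.1} is to hold, which is precisely what $F_1^1=+\infty$ records, so the statement remains consistent.
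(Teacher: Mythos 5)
First, a point of reference: the paper does not prove Theorem \ref{mainthm1} at all --- it is recalled from \cite{GU} and \cite{mus.yil.2022} and used as a black box --- so your proposal can only be judged against what a complete argument would require, not against an argument in the text.

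Measured that way, your outline has the right skeleton for case (a) but leaves the substance of (b)--(d) unproven. Case (a) can in fact be closed without any discretization: write $\int_0^t h=\int_0^\infty h(a)\,\mathds{1}_{[a,\infty)}(t)\,da$ and apply Minkowski's integral inequality first in the $L^r(u)$-layer and then in the $L^q(w)$-layer (both legitimate since $\min\{q,r\}\ge1$) to bound the left-hand side of \eqref{main.1} by $\int_0^\infty h(a)\,\norm{T\mathds{1}_{[a,\infty)}}_{q,w,\I}\,da$; your point-mass computation of $\norm{T\mathds{1}_{[a,\infty)}}_{q,w,\I}$ and the monotonicity trick replacing $v(a)^{-1}$ by $\esup_{(0,a]}v^{-1}$ then give both directions. (Carrying this out also shows that the block term equals $\big(\int_0^a w\big)^{1/q}\big(\int_a^\infty u\big)^{1/r}$, i.e.\ the roles of $u$ and $w$ in the displayed $F_1^1,F_2^1$ appear transposed relative to \eqref{main.1} as written --- a discrepancy a careful execution of your plan would have to resolve.) The real content of the theorem, however, lies in the cases where an exponent drops below $1$, and there your proposal stops exactly where the work begins: you assert that superposing point masses over a geometric sequence and invoking reverse Minkowski/H\"older produces $F_3^1,\dots,F_6^1$, and that the correct pair of conditions in each of (b)--(d) is ``dictated by which of $q,r$ lie below $1$'', but you never construct the extremizing superposition, never identify the key lemma (the $q<1$, respectively $r<1$, weighted Hardy--Copson characterization with $L^1$ right-hand side, i.e.\ the analogue of Theorem \ref{thm.BradleySinnamon_dual}(ii)) whose iteration actually generates the $q'$- and $r'$-integrals in $F_3^1$--$F_6^1$, and you explicitly defer the anti-discretization and the reconciliation of the two discretizing sequences in the mixed cases. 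Since these are precisely the steps you flag as ``the hard part'', what you have is a plausible plan rather than a proof; to complete it you would either have to carry out the discretization scheme of \cite{GU} and \cite{mus.yil.2022} in detail, or reduce \eqref{main.1} layer by layer to the known one-variable $L^1$-weighted Hardy and Copson characterizations and verify, case by case, that the resulting quantities coincide with the stated $F_i^1$.
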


Theorem \ref{main.thm.1.0} allows to obtain solution of inequality \eqref{main} from solution of inequality \eqref{main.1}.

\begin{thm}
	Let $0 < q,\, r < \infty$ and $1 < p < \infty$. Assume that $u,\,w \in \mathcal{W}(0,\infty)$ and $v \in \mathcal{W}(0,\infty)$ is such that \eqref{RT.thm.main.3.eq.0} holds. 
	
	{\rm (a)} Let $p \le \min\{q,\,r\}$. Then inequality \eqref{main} holds if and only if $F_1 < \infty$ and $F_2 < \infty$, where
	\begin{equation*}
	F_1 := \sup_{x \in (0,\infty)} \Bigg( \int_0^x w \Bigg)^{\frac{1}{q}} \Bigg( \int_x^{\infty} u \Bigg)^{\frac{1}{r}} \Bigg( \int_{0}^x v^{1-p'} \Bigg)^{\frac{1}{p'}},
	\end{equation*}
	and
	\begin{equation*}
	F_2 := \sup_{x \in (0,\infty)} \Bigg( \int_x^{\infty} \Bigg( \int_t^{\infty} u \Bigg)^{\frac{q}{r}} w(t) \, dt \Bigg)^{\frac{1}{q}} \Bigg( \int_{0}^x v^{1-p'} \Bigg)^{\frac{1}{p'}}.
	\end{equation*}
	Moreover, if $C$ is the best constant in \eqref{main}, then $C \approx F_1 + F_2$.
	
	{\rm (b)} Let $q < p \le r$. Then inequality \eqref{main} holds if and only if $F_3 < \infty$ and $F_4 < \infty$, where
	\begin{equation*}
	F_3 := \Bigg( \int_0^{\infty} \Bigg( \sup_{t \in [x,\infty)} \Bigg( \int_t^{\infty} u \Bigg)^{\frac{1}{r}} \Bigg( \int_{0}^t v^{1-p'} \Bigg)^{\frac{1}{p'}} \Bigg)^{\frac{pq}{p-q}} \Bigg( \int_0^x w \Bigg)^{\frac{q}{p-q}} w(x) \, dx \Bigg)^{\frac{p-q}{pq}},
	\end{equation*}
	and
	\begin{equation*}
	F_4 := \Bigg(\int_0^{\infty} \Bigg(\int_{x}^{\infty} \Bigg( \int_t^{\infty} u \Bigg)^{\frac{q}{r}} w(t) \, dt \Bigg)^{\frac{q}{p-q}} \Bigg( \int_x^{\infty} u \Bigg)^{\frac{q}{r}} \Bigg( \int_{0}^x v^{1-p'} \Bigg)^{\frac{(p-1)q}{p-q}} \, w(x) \, dx \Bigg)^{\frac{p-q}{pq}}.
	\end{equation*}
	
	Moreover, if $C$ is the best constant in \eqref{main}, then $C \approx F_3 + F_4$.
	
	{\rm (c)} Let $r < p \le q$. Then inequality \eqref{main} holds if and only if $F_2 < \infty$ and $F_5 < \infty$, where
	\begin{equation*}
	F_5 : = \sup_{t \in (0,\infty)} \Bigg( \int_0^t w \Bigg)^{\frac{1}{q}} \Bigg( \int_t^{\infty} \Bigg( \int_x^{\infty} u \Bigg)^{\frac{r}{p-r}} u(x) \Bigg( \int_{0}^x v^{1-p'} \Bigg)^{\frac{(p-1)r}{p-r}} \, dx \Bigg)^{\frac{p-r}{pr}}.
	\end{equation*}
	
	Moreover, if $C$ is the best constant in \eqref{main}, then $C \approx F_2 + F_5$.	
	
	{\rm (d)} Let $\max\{q,\, r\} < p$. Then inequality \eqref{main} holds if and only if $F_4 < \infty$ and $F_6 < \infty$, where
	\begin{align*}
	F_6 & := \Bigg( \int_0^{\infty} \Bigg( \int_0^t w \Bigg)^{\frac{q}{p-q}} w(t) \Bigg( \int_t^{\infty} \bigg( \int_x^{\infty} u \Bigg)^{\frac{r}{p-r}} u(x) \Bigg( \int_{0}^x v^{1-p'} \Bigg)^{\frac{(p-1)r}{p-r}} \, dx \Bigg)^{\frac{q(p-r)}{r(p-q)}} \, dt \Bigg)^{\frac{p-q}{pq}}.	
	\end{align*}
	
	Moreover, if $C$ is the best constant in \eqref{main}, then $C \approx F_4 + F_6$.	
	
\end{thm}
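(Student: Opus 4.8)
The plan is to derive the theorem by feeding the $L^1$-characterization of the iterated Hardy inequality \eqref{main.1} (Theorem \ref{mainthm1}) through the equivalence furnished by Theorem \ref{main.thm.1.0}. First I would record the precise form of the reduced inequality \eqref{main.0.1}: writing $\tilde r:=r/p$ and $\tilde q:=q/p$, the inner exponent becomes $r/p=\tilde r$, the middle exponent is $q/r=\tilde q/\tilde r$, and the outer power is $p/q=1/\tilde q$, so that \eqref{main.0.1} is \emph{verbatim} inequality \eqref{main.1} with the triple $(r,q,v)$ and the constant $C$ replaced by $(\tilde r,\tilde q,\Phi^{1-2p})$ and $C^p$ respectively, the weights $u,w$ keeping their roles. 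Since \eqref{RT.thm.main.3.eq.0} guarantees $0<\Phi(x)<\infty$, we have $\Phi^{1-2p}\in\mathcal W\I$, so Theorem \ref{mainthm1} is applicable to the reduced inequality. By Theorem \ref{main.thm.1.0}, \eqref{main} holds with constant $C$ if and only if this instance of \eqref{main.1} holds with constant $C^p$; in particular, if $\widetilde C$ denotes the least constant in the reduced inequality, then $C^p\approx\widetilde C$.

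Next I would split into the four cases exactly as in the statement and match each to a part of Theorem \ref{mainthm1} for the rescaled parameters: $p\le\min\{q,r\}$ becomes $1\le\min\{\tilde q,\tilde r\}$ (part (a)), $q<p\le r$ becomes $\tilde q<1\le\tilde r$ (part (b)), $r<p\le q$ becomes $\tilde r<1\le\tilde q$ (part (c)), and $\max\{q,r\}<p$ becomes $\max\{\tilde q,\tilde r\}<1$ (part (d)). In each case Theorem \ref{mainthm1} gives $\widetilde C\approx\widetilde F_i+\widetilde F_j$ for the corresponding index pair ($\{1,2\}$, $\{3,4\}$, $\{2,5\}$, $\{4,6\}$, respectively), where $\widetilde F_\bullet$ is the functional $F_\bullet^1$ of Theorem \ref{mainthm1} evaluated at $\tilde q,\tilde r,\Phi^{1-2p}$ (with $u,w$ unchanged). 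Combining this with $C^p\approx\widetilde C$ and the elementary equivalence $(a+b)^{1/p}\approx a^{1/p}+b^{1/p}$ yields $C\approx\widetilde F_i^{1/p}+\widetilde F_j^{1/p}$.

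It then remains to recognise $\widetilde F_\bullet^{1/p}$ as the stated $F_\bullet$, for which I would use three elementary facts. Since $\Phi$ is non-decreasing and $1-2p<0$, the function $\bigl(\Phi^{1-2p}\bigr)^{-1}=\Phi^{2p-1}$ is non-decreasing, so every essential supremum $\esup_{y\in(0,x]}\bigl(\Phi(y)^{1-2p}\bigr)^{-1}$ coming from the substitution collapses to $\Phi(x)^{2p-1}$. Second, the exponent defining $\Phi$ satisfies $p'+1=(2p-1)/(p-1)$, whence $\Phi(x)^{2p-1}=\bigl(\int_0^x v^{1-p'}\bigr)^{p-1}$ and, after the final $p$-th root, $\Phi(x)^{(2p-1)/p}=\bigl(\int_0^x v^{1-p'}\bigr)^{1/p'}$. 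Third, under $q\mapsto\tilde q=q/p$ the conjugate exponent transforms as $(q/p)'=q/(p-q)$ when $q<p$, and similarly for $r$; with these the powers $1/q,\ 1/r,\ q/r,\ q',\ r'$ occurring in $F_\bullet^1$ become those appearing in $F_\bullet$, while the global $1/p$ from the $p$-th root distributes through the nested integrals and suprema and, via the second fact, turns each $\esup_{(0,x]}(\Phi^{1-2p})^{-1}$ into the factor $\bigl(\int_0^x v^{1-p'}\bigr)^{1/p'}$.

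The conceptual content of the argument is thin — it is just the substitution of the first paragraph — so the only genuine work, and the place where care is needed, is the exponent bookkeeping in cases (b)--(d): one must track how each occurrence of $q,r,q',r'$ is rescaled, at which point the identity $p'+1=(2p-1)/(p-1)$ is used, and how the outermost $1/p$ is absorbed layer by layer into the nested expressions, so that the resulting functionals coincide exactly with $F_1,\dots,F_6$. This is routine but error-prone, and is essentially the whole proof.
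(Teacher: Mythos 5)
Your argument coincides with the paper's own proof, which consists of the single line that the statement ``immediately follows from Theorems \ref{main.thm.1.0} and \ref{mainthm1}''; your substitution $(r,q,v,C)\mapsto(r/p,\,q/p,\,\Phi^{1-2p},\,C^p)$, the collapse of $\esup_{y\in(0,x]}\Phi(y)^{2p-1}$ to $\Phi(x)^{2p-1}=\big(\int_0^x v^{1-p'}\big)^{p-1}$ (using $p'+1=(2p-1)/(p-1)$), and the rescaled conjugates $(q/p)'=q/(p-q)$, $(r/p)'=r/(p-r)$ are exactly the bookkeeping that line suppresses, and they are correct. One caveat: Theorem \ref{mainthm1} as printed carries $u$ and $w$ in interchanged positions relative to \eqref{main.1} (its functionals $F^1_\bullet$ translate under your substitution to the $u\leftrightarrow w$ swap of the stated $F_1,\dots,F_6$, as one can check e.g.\ at $r=q$ via Fubini against Theorem \ref{thm.BradleySinnamon}), so the exact coincidence you assert holds only after correcting that labeling --- an inconsistency in the paper's auxiliary statement rather than a flaw in your reduction.
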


\begin{proof}
	The proof of the statement immediately follows from Theorems \ref{main.thm.1.0} and \ref{mainthm1}.
\end{proof}

We need the following statement on characterization of the inequality
\begin{equation}\label{iterH1.1}
\Bigg( \int_0^{\infty} \Bigg(\int_x^{\infty} \Bigg( \int_t^{\infty} h \Bigg)^r u(t)\,dt \Bigg)^{\frac{q}{r}} w(x)\,dx \Bigg)^{\frac{1}{q}} \le C \, \int_0^{\infty} h v, \qquad h \in \mp^+ (0,\infty)
\end{equation}
(see, for example, \cite{krepick} and \cite{mus.2017_cor}).

\begin{thm}\label{mainthm2}
	Let $0 < q,\, r < \infty$ and  $u,\,v,\,w \in {\mathcal W}\I$.
	
	{\rm (a)} Let $1 \le \min\{q,\,r\}$. Then inequality \eqref{iterH1.1} holds if and only if $E_1^1 < \infty$, where
	\begin{align*}
		E_1^1 : & = \esup_{t\in (0,\infty)} \Bigg(\int_{0}^{t} u(s) \, \Bigg(\int_{s}^{t} w \Bigg)^{\frac{q}{r}} \, ds \Bigg)^{\frac{1}{q}} \Bigg( \esup_{y\in [t,\infty)} v(y)^{-1} \Bigg).
	\end{align*}	
	
	Moreover, if $C$ is the best constant in \eqref{iterH1.1}, then $C \approx E_1^1$.
	
	{\rm (b)} Let $q < 1 \le r$. Then inequality \eqref{iterH1.1} holds if and only if $E_2^1 < \infty$ and $E_3^1 < \infty$, where
	\begin{equation*}
		E_2^1 := \Bigg(\int_{0}^{\infty} \Bigg(\int_{0}^{t} u \Bigg)^{q'} u(t) \Bigg( \esup_{x\in [t,\infty)} \Bigg(\int_{t}^{x} w \Bigg)^{\frac{q'}{r}} \Bigg( \esup_{y\in [x,\infty)} v(y)^{-1} \Bigg)^{q'} \Bigg) \, dt \Bigg)^{\frac{1}{q'}},
	\end{equation*}
	and
	\begin{equation*}
		E_3^1 := \Bigg(\int_{0}^{\infty} \Bigg(\int_{0}^{t} u(s) \Bigg(\int_{s}^{t} w \Bigg)^{\frac{q}{r}} \, ds \Bigg)^{q'} u(t) \Bigg( \esup_{x\in [t,\infty)} \Bigg(\int_{t}^{x} w \Bigg)^{\frac{q}{r}} \Bigg( \esup_{y\in [x,\infty)} v(y)^{-1} \Bigg)^{q'} \Bigg) \, dt \Bigg)^{\frac{1}{q'}}.	
	\end{equation*}
	
	Moreover, if $C$ is the best constant in \eqref{iterH1.1}, then $C \approx E_2^1 + E_3^1$.
	
	{\rm (c)} Let $r < 1 \le q$. Then inequality \eqref{iterH1.1} holds if and only if $E_1^1 < \infty$ and $E_4^1 < \infty$, where
	\begin{align*}
		E_4^1 & := \esup_{t\in (0,\infty)} \Bigg(\int_{0}^{t} u \Bigg)^{\frac{1}{q}} \Bigg(\int_{t}^{\infty} \Bigg(\int_{t}^{x} w \Bigg)^{r'} w(x) \Bigg(\esup_{y\in [x,\infty)} v(y)^{-1} \Bigg)^{r'} \, dx \Bigg)^{\frac{1}{r'}}.	
	\end{align*}
	
	Moreover, if $C$ is the best constant in \eqref{iterH1.1}, then $C \approx E_1^1 + E_4^1$.	
	
	{\rm (d)} Let $\max\{q,\,r\} < 1$. Then inequality \eqref{iterH1.1} holds if and only if $E_3^1 < \infty$ and $E_5^1 < \infty$, where
	\begin{align*}
		E_5^1 & := \Bigg(\int_{0}^{\infty} \Bigg(\int_{0}^{t} u \Bigg)^{q'} u(t) \Bigg( \int_{t}^{\infty} \Bigg(\int_{t}^{x} w \Bigg)^{r'} w(x) \Bigg( \esup_{y\in [x,\infty)} v(y)^{-1} \Bigg)^{r'} \, dx \Bigg)^{\frac{q'}{r'}} \, dt \Bigg)^{\frac{1}{q'}}.	
	\end{align*}
	
	Moreover, if $C$ is the best constant in \eqref{iterH1.1}, then $C \approx E_3^1 + E_5^1$.	
\end{thm}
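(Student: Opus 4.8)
The assertion is known --- in the stated ranges of the parameters it is contained in \cite{krepick} and \cite{mus.2017_cor} --- so the ``proof'' I sketch is really a description of the method behind those results. The plan is to combine a reduction in the spirit of the ones used above (realise the inner Copson integral as a primitive of $h$, so that the cone of \emph{non-increasing} functions drives everything) with a discretization/anti-discretization argument, feeding the one-dimensional Hardy and Copson characterizations recalled in Theorems \ref{thm.BradleySinnamon} and \ref{thm.BradleySinnamon_dual}.

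First, put $g(t):=\int_t^{\infty}h$, so that $g\in{\mathfrak M}^{+,\dn}\I$, $h=-g'$ and (we may assume $\int^{\infty}h<\infty$) $g(\infty)=0$; then \eqref{iterH1.1} reads
$$
\norm{Tg}_{q,w,\I}\le C\int_0^{\infty}(-g')\,v ,\qquad g\in{\mathfrak M}^{+,\dn}\I,\ g(\infty)=0,
$$
where $Tg(x)=\big(\int_x^{\infty}g^{r}u\big)^{1/r}$ is the outer Copson operator (a monotone quasilinear operator). Using that $g$ is non-increasing, I would choose a covering sequence $\{x_k\}_{k\in\Z}\subset\I$ adapted to the problem --- e.g.\ so that along $k$ the quantity $\int_{x}^{\infty}\big(\int_t^{\infty}h\big)^{r}u\,dt$ (equivalently $\int_{x_k}^{\infty}g^{r}u$) grows geometrically --- and split $\I=\bigsqcup_k[x_k,x_{k+1})$. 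On each block $g$ is comparable to the constant $g(x_k)$, which collapses both nested integrals into a sum over $k$ of block contributions together with genuinely one-block, hence one-dimensional, weighted Hardy/Copson integrals.

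The second step is to read off the pieces. The local, single-block inequalities are ordinary weighted Hardy or Copson inequalities with an $L^1$ right-hand side, dispatched by Theorems \ref{thm.BradleySinnamon} and \ref{thm.BradleySinnamon_dual}; this is exactly where the factors $\esup_{y}v(y)^{-1}$ in $E_1^1,\dots,E_5^1$ come from --- they are the $L^1$-dual of the weight $v$. What is left after extracting the local parts is a discrete Hardy-type inequality for sequences, characterized by the standard sequence-space criteria. Combining the local and discrete conditions and then anti-discretizing --- replacing $\{x_k\}$ and the block sums by continuous integrals via the usual equivalence between a ``discretized'' sum of integrals and a single integral --- recasts the criteria as the continuous expressions $E_i^1$ in (a)--(d).

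The case split $1\le\min\{q,r\}$, $q<1\le r$, $r<1\le q$, $\max\{q,r\}<1$ is forced at two points: the treatment of the inner $L^{r}(u)$-norm and of the outer $L^{q}(w)$-norm, and the characterization of the auxiliary discrete Hardy inequality. When the exponent in question is $\ge1$ the triangle inequality is at hand and a single supremum-type condition survives --- this is case (a), with $E_1^1$ alone --- while for an exponent $<1$ one must use the dual, $q'$- (or $r'$-) integral form of the discrete Hardy inequality, which produces the exponents $q'$, $r'$ and the extra summands $E_2^1,\dots,E_5^1$ in (b)--(d). The main obstacle, and the technical core of the whole argument, is to pick the covering sequence so that it is simultaneously good for the inner operator $g\mapsto\big(\int_{\cdot}^{\infty}g^{r}u\big)^{1/r}$ and for the outer norm $\norm{\cdot}_{q,w,\I}$, and then to prove that the countably many local pieces are together not merely necessary but also sufficient --- that nothing is lost in the discretization --- especially in the quasi-Banach ranges $q<1$ and/or $r<1$, where duality is not directly available.
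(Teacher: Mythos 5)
The paper offers no proof of this theorem: it is recalled as a known characterization of \eqref{iterH1.1}, with the reader referred to \cite{krepick} and \cite{mus.2017_cor}, which is exactly what you do. Your accompanying sketch of the discretization/anti-discretization machinery behind those references is consistent with how the paper itself describes them, but as written it is only an outline (the covering sequence is not constructed, the local and discrete Hardy characterizations are not carried out, and the anti-discretization back to the expressions $E_1^1,\dots,E_5^1$ is not performed), so it should be read as context for the citation rather than as a self-contained proof --- which puts it on the same footing as the paper.
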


Similarly, Theorem \ref{main.thm.2.0} allows to obtain solution of inequality \eqref{iterH1} from solution of inequality \eqref{iterH1.1}.

\begin{thm}
	Let $0 < q,\, r < \infty$ and $1 < p < \infty$. Assume that $u,\,w \in \mathcal{W}(0,\infty)$ and $v \in \mathcal{W}(0,\infty)$ is such that \eqref{RT.thm.main.3.eq.0} holds.
	
	{\rm (a)} Let $p \le \min\{q,\,r\}$. Then inequality \eqref{iterH1} holds if and only if $E_1 < \infty$, where
	\begin{equation*}
		E_1 :  = \esup_{t\in (0,\infty)} \Bigg( \int_{0}^{t} w(s) \Bigg( \int_{s}^{t} u \Bigg)^{\frac{q}{r}} \, ds \Bigg)^{\frac{1}{q}}  \Bigg( \int_t^{\infty} v^{1-p'} \Bigg)^{\frac{1}{p'}}.
	\end{equation*}	
	
	Moreover, if $C$ is the best constant in \eqref{iterH1}, then $C \approx E_1$.
	
	{\rm (b)} Let $q < p \le r$. Then inequality \eqref{iterH1} holds if and only if $E_2 < \infty$ and $E_3 < \infty$, where
	\begin{equation*}
		E_2  := \Bigg( \int_{0}^{\infty} \Bigg( \int_{0}^{t} w \Bigg)^{\frac{q}{p-q}} w(t) \Bigg( \esup_{x\in (t,\infty)} \Bigg( \int_{t}^{x} u \Bigg)^{\frac{1}{r}}  \Bigg( \int_x^{\infty} v^{1-p'} \Bigg)^{\frac{1}{p'}} \Bigg)^{\frac{pq}{p-q}} \, dt \Bigg)^{\frac{p-q}{pq}} 
	\end{equation*}
	and 
	\begin{equation*}
		E_3  := \Bigg( \int_{0}^{\infty} \Bigg( \int_{0}^{t} w(s) \Bigg(\int_{s}^{t} u \Bigg)^{\frac{q}{r}} \, ds \Bigg)^{\frac{q}{p-q}} w(t) \Bigg( \esup_{x\in (t,\infty)} \Bigg( \int_{t}^{x} u \Bigg)^{\frac{q}{r}} \Bigg( \int_x^{\infty} v^{1-p'} \Bigg)^{\frac{(p-1)q}{p-q}} \Bigg) \, dt \Bigg)^{\frac{p-q}{pq}}.	
	\end{equation*}
	
	Moreover, if $C$ is the best constant in \eqref{iterH1}, then $C \approx E_2 + E_3$.
	
	{\rm (c)} Let $r < p \le q$. Then inequality \eqref{iterH1} holds if and only if $E_1 < \infty$ and $E_4 < \infty$, where
	\begin{align*}
		E_4 & := \esup_{t\in (0,\infty)} \Bigg( \int_{0}^{t} w \Bigg)^{\frac{1}{q}} \Bigg( \int_{t}^{\infty} \Bigg(\int_{t}^{x} u \Bigg)^{\frac{r}{p-r}} u(x) \Bigg( \int_x^{\infty} v^{1-p'} \Bigg)^{\frac{(p-1)r}{p-r}}  \, dx \Bigg)^{\frac{p-r}{pr}}.	
	\end{align*}
	
	Moreover, if $C$ is the best constant in \eqref{iterH1}, then $C \approx E_1 + E_4$.	
	
	{\rm (d)} Let $\max\{q,\,r\} < p$. Then inequality \eqref{iterH1} holds if and only if $E_3 < \infty$ and $E_5 < \infty$, where
	\begin{align*}
		E_5 & := \Bigg( \int_{0}^{\infty} \Bigg( \int_{0}^{t} w \Bigg)^{\frac{q}{p-q}} w(t) \Bigg( \int_{t}^{\infty} \Bigg( \int_{t}^{x} u \Bigg)^{\frac{r}{p-r}} u(x) \Bigg( \int_x^{\infty} v^{1-p'} \Bigg)^{\frac{(p-1)r}{p-r}} \, dx \Bigg)^{\frac{q(p-r)}{r(p-q)}} \, dt \Bigg)^{\frac{p-q}{pq}}.	
	\end{align*}
	
	Moreover, if $C$ is the best constant in \eqref{iterH1}, then $C \approx E_3 + E_5$.	
\end{thm}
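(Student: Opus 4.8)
The plan is to read the statement off by combining the reduction of Theorem~\ref{main.thm.2.0} with the $L^1$-characterization recorded in Theorem~\ref{mainthm2}. By Theorem~\ref{main.thm.2.0}, inequality \eqref{iterH1} holds if and only if \eqref{main.0.2} holds. The one conceptual observation I would make is that \eqref{main.0.2} is, structurally, an instance of \eqref{iterH1.1}: it is obtained from \eqref{iterH1.1} by keeping $u$, $w$ and $h$ fixed and replacing $r$ by $r/p$, $q$ by $q/p$, $v$ by $\Psi^{1-2p}$ and the constant $C$ by $C^p$; under this replacement the exponent $q/r$ occurring inside the operator and the whole nested structure of the left-hand side are unchanged, so \eqref{main.0.2} is genuinely covered by Theorem~\ref{mainthm2}.

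I would then apply Theorem~\ref{mainthm2} to \eqref{main.0.2} with these parameters. The four regimes there pass to the four regimes of the present theorem: $1\le\min\{q/p,r/p\}$ is $p\le\min\{q,r\}$ (case~(a)), $q/p<1\le r/p$ is $q<p\le r$ (case~(b)), $r/p<1\le q/p$ is $r<p\le q$ (case~(c)), and $\max\{q/p,r/p\}<1$ is $\max\{q,r\}<p$ (case~(d)). In each regime Theorem~\ref{mainthm2} gives that \eqref{iterH1} holds if and only if the quantities obtained from the relevant $E_i^1$ by the above substitution are finite, and that $C^p$ is comparable to their sum; since $(a+b)^{1/p}\approx a^{1/p}+b^{1/p}$ for non-negative $a,b$, raising to the power $1/p$ produces the claimed two-sided estimate for the best constant $C$ in \eqref{iterH1}.

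It then remains to bring each $\big(E_i^1\big)^{1/p}$, after substitution, into the closed form written for $E_i$. Two elementary facts do all the work. First, $\Psi(x)=\big(\int_x^\infty v^{1-p'}\big)^{1/(p'+1)}$ is non-increasing, so $\Psi^{1-2p}$ is non-decreasing and hence every factor $\esup_{y\in[x,\infty)}\big(\Psi^{1-2p}(y)\big)^{-1}$ that occurs in the $E_i^1$ collapses to $\Psi(x)^{2p-1}$. Second, $p'+1=(2p-1)/(p-1)$, so $\Psi(x)^{2p-1}=\big(\int_x^\infty v^{1-p'}\big)^{p-1}$, and after the overall power $1/p$ the exponent on $\int_x^\infty v^{1-p'}$ becomes $(p-1)/p=1/p'$, exactly as in the statement; the powers carried by the $u$- and $w$-integrals rescale correspondingly, where one uses $(q/p)'=q/(p-q)$ for $q<p$ and $(r/p)'=r/(p-r)$ for $r<p$ to recover the exponents in $E_2,\dots,E_5$.

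The step I expect to be the main obstacle is precisely this last piece of bookkeeping: one must verify, case by case, that each exponent $1/q$, $1/q'$, $q'$, $1/r'$, $q'/r'$, and so on, occurring in Theorem~\ref{mainthm2} transforms, under $q\mapsto q/p$, $r\mapsto r/p$ followed by the power $1/p$, into precisely the exponent written in the matching $E_i$, and that the various $\Psi$-powers simplify to powers of $\int_x^\infty v^{1-p'}$ of the indicated order. Everything beyond this is a direct invocation of Theorems~\ref{main.thm.2.0} and~\ref{mainthm2}.
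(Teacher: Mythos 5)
Your proposal follows exactly the paper's route: the paper's entire proof is the one\-/line observation that the statement follows from Theorems \ref{main.thm.2.0} and \ref{mainthm2}, and your reduction of \eqref{iterH1} to \eqref{main.0.2}, the identification of \eqref{main.0.2} as \eqref{iterH1.1} with $q\mapsto q/p$, $r\mapsto r/p$, $v\mapsto\Psi^{1-2p}$, $C\mapsto C^p$, the matching of the four parameter regimes, and the simplifications $\esup_{y\in[x,\infty)}\Psi(y)^{2p-1}=\Psi(x)^{2p-1}=\big(\int_x^\infty v^{1-p'}\big)^{p-1}$, $(q/p)'=q/(p-q)$, $(r/p)'=r/(p-r)$, together with $(a+b)^{1/p}\approx a^{1/p}+b^{1/p}$ for the constant, are all correct.

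One concrete warning about the bookkeeping you deferred and declared to be the main obstacle: if you apply Theorem \ref{mainthm2} verbatim as printed, the identification $\big(E_i^1\big)^{1/p}\rightsquigarrow E_i$ does \emph{not} come out, because in the printed $E_i^1$ the weights $u$ and $w$ occupy positions opposite to those in the $E_i$ of the statement (for instance $E_1^1$ contains $\int_0^t u(s)\big(\int_s^t w\big)^{q/r}\,ds$ while $E_1$ contains $\int_0^t w(s)\big(\int_s^t u\big)^{q/r}\,ds$), and these two quantities are not comparable in general unless $q=r$ (take $w\equiv1$ and $u$ concentrated near $t$). Testing \eqref{iterH1.1} with $h$ concentrated near a point $y$ forces $\big(\int_0^y w(s)\big(\int_s^y u\big)^{q/r}ds\big)^{1/q}\le C\,v(y)$, so the correct characterization of \eqref{iterH1.1} has $w$ outermost and $u$ innermost; in other words the printed $E_i^1$ carry a $u\leftrightarrow w$ misprint (apparently inherited from a source using the opposite naming of the weights), and only after correcting it does your exponent bookkeeping reproduce $E_1,\dots,E_5$ exactly, confirming the theorem as stated. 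A smaller point of the same kind: the invocation of Theorem \ref{main.thm.2.0} requires hypothesis \eqref{RT.thm.main.4.eq.0} (finiteness of $\int_x^\infty v^{1-p'}$, which is what makes $\Psi$ finite), not \eqref{RT.thm.main.3.eq.0} as written in the statement, so your proof should be read against that corrected hypothesis as well.
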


\begin{proof}
	The proof of the statement at once follows from Theorems \ref{main.thm.2.0} and \ref{mainthm2}.
\end{proof}


\begin{bibdiv}
    \begin{biblist}
        
        \bib{AOR}{article}{
        	author={Aguilar Ca\~{n}estro, M.I.},
        	author={Ortega Salvador, P.},
        	author={Ram\'{\i}rez Torreblanca, C.},
        	title={Weighted bilinear Hardy inequalities},
        	journal={J. Math. Anal. Appl.},
        	volume={387},
        	date={2012},
        	number={1},
        	pages={320--334},
        }
    
        \bib{AM}{article}{
        	author={Ari\~{n}o, M.A.},
        	author={Muckenhoupt, B.},
        	title={Maximal functions on classical Lorentz spaces and Hardy's
        		inequality with weights for nonincreasing functions},
        	journal={Trans. Amer. Math. Soc.},
        	volume={320},
        	date={1990},
        	number={2},
        	pages={727--735},
        }
    
        \bib{BMU}{article}{
        	author={Bilgi\c{c}li, N.},
        	author={Mustafayev, R.Ch.},
        	author={\"{U}nver, T.},
        	title={Multidimensional bilinear Hardy inequalities},
        	journal={Azerb. J. Math.},
        	volume={10},
        	date={2020},
        	number={1},
        	pages={127--161},
        	issn={},
        	review={},
        }
    
        \bib{Bradley}{article}{
        	author={Bradley, J.S.},
        	title={Hardy inequalities with mixed norms},
        	journal={Canad. Math. Bull.},
        	volume={21},
        	date={1978},
        	number={4},
        	pages={405--408},
        }
    
        \bib{burgold}{article}{
        	author={Burenkov, V.I.},
        	author={Gol\cprime dman, M.L.},
        	title={Calculation of the norm of a positive operator on the cone of
        		monotone functions},
        	language={Russian},
        	note={In honor of the ninetieth birthday of Academician S. M. Nikol\cprime ski\u{\i}
        		(Russian)},
        	journal={Trudy Mat. Inst. Steklov.},
        	volume={210},
        	date={1995},
        	number={Teor. Funktsi\u{\i} i Differ. Uravn.},
        	pages={65--89},
        }
        
        \bib{cgmp_2008}{article}{
        	author={Carro, M.},
        	author={Gogatishvili, A.},
        	author={Mart\'{\i}n, J.},
        	author={Pick, L.},
        	title={Weighted inequalities involving two Hardy operators with
        		applications to embeddings of function spaces},
        	journal={J. Operator Theory},
        	volume={59},
        	date={2008},
        	number={2},
        	pages={309--332},
        }
        
        \bib{CRS}{article}{
        	author={Carro, M.J.},
        	author={Raposo, J.A.},
        	author={Soria, J.},
        	title={Recent developments in the theory of Lorentz spaces and weighted
        		inequalities},
        	journal={Mem. Amer. Math. Soc.},
        	volume={187},
        	date={2007},
        	number={877},
        	pages={xii+128},
        } 
        
        \bib{carsor_1993}{article}{
        	author={Carro, M.J.},
        	author={Soria, J.},
        	title={Boundedness of some integral operators},
        	journal={Canad. J. Math.},
        	volume={45},
        	date={1993},
        	number={6},
        	pages={1155--1166},
        }
    
        \bib{gop2009}{article}{
        	author={Evans, W.D.},
        	author={Gogatishvili, A.},
        	author={Opic, B.},
        	title={The $\rho$-quasiconcave functions and weighted inequalities},
        	conference={
        		title={Inequalities and applications},
        	},
        	book={
        		series={Internat. Ser. Numer. Math.},
        		volume={157},
        		publisher={Birkh\"auser},
        		place={Basel},
        	},
        	date={2009},
        	pages={121--132},
        	review={},
        }
    
        \bib{g1}{article}{
        	author={Gogatishvili, A.},
        	title={Discretization and anti-discretization of function spaces},
        	series={},
        	edition={},
        	journal={In the proceedings of the The Autumn Conference Mathematical Society
        		of Japan, September 25--28, Shimane University, Matsue (2002)},
        	pages={63--72},
        }
        
        \bib{gjop}{article}{
        	author={Gogatishvili, A.},
        	author={Johansson, M.},
        	author={Okpoti, C.A.},
        	author={Persson, L.-E.},
        	title={Characterisation of embeddings in Lorentz spaces},
        	journal={Bull. Austral. Math. Soc.},
        	volume={76},
        	date={2007},
        	number={1},
        	pages={69--92},
        	review={},
        	doi={},
        }
        
        \bib{GKPS}{article}{
        	author={Gogatishvili, A.},
        	author={K\v{r}epela, M.},
        	author={Pick, L.},
        	author={Soudsk\'{y}, F.},
        	title={Embeddings of Lorentz-type spaces involving weighted integral
        		means},
        	journal={J. Funct. Anal.},
        	volume={273},
        	date={2017},
        	number={9},
        	pages={2939--2980},
        	review={},
        	doi={},
        }
    
        \bib{GMPTU}{article}{
           	author={Gogatishvili, A.},
        	author={Mihula, Z.},
        	author={Pick, L.},
        	author={Tur\v{c}inov\'{a}, H.},
        	author={\"{U}nver, T.},
        	title={Weighted inequalities for a superposition of the Copson operator
        		and the Hardy operator},
        	journal={J. Fourier Anal. Appl.},
        	volume={28},
        	date={2022},
        	number={2},
        	pages={Paper No. 24, 24},
        }
        
        \bib{GogMusIHI}{article}{
        	author={Gogatishvili, A.},
        	author={Mustafayev, R.Ch.},
        	title={Weighted iterated Hardy-type inequalities},
        	journal={Math. Inequal. Appl.},
        	volume={20},
        	date={2017},            
        	number={3},
        	pages={683--728},
        	issn={},
        	doi={},
        }
    
        \bib{gog.mus.2017_2}{article}{
        	author={Gogatishvili, A.},
        	author={Mustafayev, R.Ch.},
        	title={Iterated Hardy-type inequalities involving suprema},
        	journal={Math. Inequal. Appl.},
        	volume={20},
        	date={2017},
        	number={4},
        	pages={901--927},
        	issn={},
        	review={},
        }
     
        \bib{GMP1}{article}{
        	author={Gogatishvili, A.},
        	author={Mustafayev, R.Ch.},
        	author={Persson, L.-E.},
        	title={Some new iterated Hardy-type inequalities},
        	journal={J. Funct. Spaces Appl.},
        	date={2012},
        	pages={Art. ID 734194, 30},
        	issn={0972-6802},
        	review={},
        	doi={},
        }
        
        \bib{GMP2}{article}{
        	author={Gogatishvili, A.},
        	author={Mustafayev, R.Ch.},
        	author={Persson, L.-E.},
        	title={Some new iterated Hardy-type inequalities: the case $\theta=1$},
        	journal={J. Inequal. Appl.},
        	date={2013},
        	pages={2013:515, 29},
        	review={},
        }
            
        \bib{gmu_CMJ}{article}{
        	author={Gogatishvili, A.},
        	author={Mustafayev, R.},
        	author={\"Unver, T.},
        	title={Embeddings between weighted Copson and Ces\`aro function spaces},
        	journal={Czechoslovak Math. J.},
        	volume={67(142)},
        	date={2017},
        	number={4},
        	pages={1105--1132},
        	issn={0011-4642},
        	review={},
        	doi={},
        }
        
        \bib{gmu_2017}{article}{
        	author={Gogatishvili, A.},
        	author={Mustafayev, R.Ch.},
        	author={\"Unver, T.},
        	title={Embedding relations between weighted complementary local
        		Morrey-type spaces and weighted local Morrey-type spaces},
        	journal={Eurasian Math. J.},
        	volume={8},
        	date={2017},
        	number={1},
        	pages={34--49},
        	issn={2077-9879},
        	review={},
        }
        
        \bib{gogmusunv}{article}{
        	author = {Gogatishvili, A.}
        	author = {Mustafayev, R.Ch.}
        	author = {Unver, T.},
        	year = {2019},
        	month = {12},
        	pages = {1303-1328},
        	title = {Pointwise multipliers between weighted Copson and Cesàro function spaces},
        	volume = {69},
        	journal = {Mathematica Slovaca},
        	doi = {}
        }	
                        
    
        \bib{gogpick_2207}{article}{
        	author={Gogatishvili, A.},
        	author={Pick, L.},
        	title={A reduction theorem for supremum operators},
        	journal={J. Comput. Appl. Math.},
        	volume={208},
        	date={2007},
        	number={1},
        	pages={270--279},
        }
        
    
        \bib{GogStep1}{article}{
        	author={Gogatishvili, A.},
        	author={Stepanov, V.D.},
        	title={Reduction theorems for operators on the cones of monotone
        		functions},
        	journal={J. Math. Anal. Appl.},
        	volume={405},
        	date={2013},
        	number={1},
        	pages={156--172},
        }
        
        \bib{GogStep}{article}{
        	author={Gogatishvili, A.},
        	author={Stepanov, V.D.},
        	title={Reduction theorems for weighted integral inequalities on the cone
        		of monotone functions},
        	language={Russian, with Russian summary},
        	journal={Uspekhi Mat. Nauk},
        	volume={68},
        	date={2013},
        	number={4(412)},
        	pages={3--68},
        	translation={
        		journal={Russian Math. Surveys},
        		volume={68},
        		date={2013},
        		number={4},
        		pages={597--664},
        	},
        } 
        
        \bib{GU}{article}{
        	author={Gogatishvili, A.},
        	author={\"{U}nver, T.},
        	title={Weighted inequalities involving iteration of two Hardy operators},
        	journal={Preprint. arXiv:2201.11437},
        	volume={},
        	date={2022},
        	number={},
        	pages={},
        	issn={},
        }
    
        \bib{Krep}{article}{
        	author={K\v repela, M.},
        	title={Iterating bilinear Hardy inequalities},
        	journal={Proc. Edinb. Math. Soc. (2)},
        	volume={60},
        	date={2017},
        	number={4},
        	pages={955--971},
        	review={},
        	doi={},
        }
        
		\bib{krepick}{article}{
			author={K\v{r}epela, M.},
			author={Pick, L.},
			title={Weighted inequalities for iterated Copson integral operators},
			journal={Studia Math.},
			volume={253},
			date={2020},
			number={2},
			pages={163--197},
			review={},
			doi={},
		}
	
	    \bib{kufmalpers}{book}{
	    	author={Kufner, A.},
	    	author={Maligranda, L.},
	    	author={Persson, L.-E.},
	    	title={The Hardy inequality},
	    	note={About its history and some related results},
	    	publisher={Vydavatelsk\'{y} Servis, Plze\v{n}},
	    	date={2007},
	    	pages={162},
	    }
	    
	    \bib{kp}{book}{
	    	author={Kufner, A.},
	    	author={Persson, L.-E.},
	    	title={Weighted inequalities of Hardy type},
	    	publisher={World Scientific Publishing Co., Inc., River Edge, NJ},
	    	date={2003},
	    	pages={xviii+357},
	    }
	    
	    \bib{kufperssam}{book}{
	    	author={Kufner, A.},
	    	author={Persson, L.-E.},
	    	author={Samko, N.},
	    	title={Weighted inequalities of Hardy type},
	    	edition={2},
	    	publisher={World Scientific Publishing Co. Pte. Ltd., Hackensack, NJ},
	    	date={2017},
	    	pages={xx+459},
	    }	   
	
	    \bib{lai_1993}{article}{
	    	author={Lai, S.},
	    	title={Weighted norm inequalities for general operators on monotone
	    		functions},
	    	journal={Trans. Amer. Math. Soc.},
	    	volume={340},
	    	date={1993},
	    	number={2},
	    	pages={811--836},
	    } 
      
        \bib{mus.2017_cor}{article}{
        	author={Mustafayev, R.Ch.},
        	title={On weighted iterated Hardy-type inequalities},
        	journal={Positivity},
        	volume={22},
        	date={2018},
        	number={},
        	pages={275--299},
        	issn={},
        	review={\,Corrected in: Corrigendum to "On weighted iterated Hardy-type inequalities" [Positivity, 22 (1) (2018), 275-299], Preprint.\,arXiv:1606.06705v2.},
        }
    
    
        \bib{mus.yil.2022}{article}{
        	author={Mustafayev, R.Ch.},
        	author={Yilmaz, M.},
        	title={Another approach to weighted inequalities for a superposition of Copson and Hardy operators},
        	journal={Preprint. arXiv:2203.08661},
        	volume={},
        	date={2022},
        	number={},
        	pages={},
        	issn={},
        	review={},
        }
    
        \bib{ok}{book}{
        	author={Opic, B.},
        	author={Kufner, A.},
        	title={Hardy-type inequalities},
        	series={Pitman Research Notes in Mathematics Series},
        	volume={219},
        	publisher={Longman Scientific \& Technical, Harlow},
        	date={1990},
        	pages={xii+333},
        }
     
        \bib{ProkhStep1}{article}{
            author={Prokhorov, D.V.},
            author={Stepanov, V.D.},
            title={On weighted Hardy inequalities in mixed norms},
            journal={Proc. Steklov Inst. Math.},
            volume={283},
            date={2013},
            pages={149-164},
        }
        
        \bib{sawyer19990}{article}{
        	author={Sawyer, E.},
        	title={Boundedness of classical operators on classical Lorentz spaces},
        	journal={Studia Math.},
        	volume={96},
        	date={1990},
        	number={2},
        	pages={145--158},
        } 


        \bib{ss}{article}{
            author={Sinnamon, G.},
            author={Stepanov, V.D.},
            title={The weighted Hardy inequality: new proofs and the case $p=1$},
            journal={J. London Math. Soc. (2)},
            volume={54},
            date={1996},
            number={1},
            pages={89--101},
        }
    
        \bib{step_1993}{article}{
        	author={Stepanov, V.D.},
        	title={The weighted Hardy's inequality for nonincreasing functions},
        	journal={Trans. Amer. Math. Soc.},
        	volume={338},
        	date={1993},
        	number={1},
        	pages={173--186},
        }
    
        \bib{Step_1993_JLMS}{article}{
        	author={Stepanov, V.D.},
        	title={Integral operators on the cone of monotone functions},
        	journal={J. London Math. Soc. (2)},
        	volume={48},
        	date={1993},
        	number={3},
        	pages={465--487},
        } 

\end{biblist}
\end{bibdiv}

\end{document}